\numberwithin{equation}{section}
\theoremstyle{plain}
\newtheorem{theorem}{Theorem}[section]
\newtheorem{lemma}[theorem]{Lemma}
\newtheorem{corollary}[theorem]{Corollary}
\newtheorem{proposition}[theorem]{Proposition}
\newtheorem{assumption}{Assumption}
 \theoremstyle{definition}
\newtheorem{definition}[theorem]{Definition}
\newtheorem{conjecture}[theorem]{Conjecture}
\newtheorem{remark}[theorem]{Remark}
\newtheorem{example}{Example}[section]
\newcommand{\im}{\operatorname{im}}
\DeclarePairedDelimiterX{\inp}[2]{\langle}{\rangle}{#1, #2}
\newcommand{\cB}{{\mathcal B}}
\newcommand{\cC}{{\mathcal C}}
\newcommand{\cE}{{\mathcal E}}
\newcommand{\cF}{{\mathcal F}}
\newcommand{\cH}{{\mathcal H}}
\newcommand{\cJ}{{\mathcal J}}
\newcommand{\cK}{{\mathcal K}}
\newcommand{\cL}{{\mathcal L}}
\newcommand{\cM}{{\mathcal M}}
\newcommand{\cP}{{\mathcal P}}
\newcommand{\cQ}{{\mathcal Q}}
\newcommand{\cS}{{\mathcal S}}
\newcommand{\cU}{{\mathcal U}}
\newcommand{\cW}{{\mathcal W}}
\newcommand{\fL}{{\mathfrak L}}
\newcommand{\id}{{\mathrm{Id} }}
\newcommand{\Gr}{{\mathrm{Gr} }}
\newcommand{\spec}{{\text{spec}}}
\newcommand{\spinc}{{ \text{spin}^c }}
\newcommand{\diag}{{ \mathrm{Diag} }}
\newcommand{\ind}{{ \mathrm{Ind} }}
\newcommand{\tr}{{ \mathrm{Tr} }}
\newcommand{\spf}{{ \mathrm{sf} }}
\newcommand{\mas}{{ \mathrm{Mas} }}
\newcommand{\Det}{{ \mathrm{det} }}
\newcommand{\ba}{\begin{eqnarray}}
\newcommand{\na}{\end{eqnarray}}
\newcommand{\ban}{\begin{eqnarray*}}
\newcommand{\nan}{\end{eqnarray*}}
\newcommand{\C}{{\mathbb C}}
\newcommand{\E}{{\mathbb E}}
\newcommand{\R}{{\mathbb R}}
\newcommand{\Z}{{\mathbb Z}}
\renewcommand{\thefootnote}{\fnsymbol{footnote}}
\g@addto@macro{\endabstract}{\@setabstract}
\newcommand{\authorfootnotes}{\renewcommand\thefootnote{\@fnsymbol\c@footnote}}%
\title[]{Equivariant Spectral Flow and Equivariant $\eta$-invariants on Manifolds with Boundary} 
\subjclass[2020]{58J28, 58J30, 58J32.}
\keywords{Equivariant,  spectral flow, $\eta$-invariant, Maslov index, Maslov triple index, winding number, $\zeta$-determinant, splitting of $\eta$-invariant} 
\begin{document}


    \maketitle 
    
\begin{center}
	
	\normalsize
	\authorfootnotes
	Johnny Lim\textsuperscript{1} \footnote[2]{Corresponding author.}, 
	Hang Wang\textsuperscript{2} 
	\par \bigskip
	
	\textsuperscript{1} \small{School of Mathematical Sciences, Universiti Sains Malaysia, Malaysia} \par
	\textsuperscript{2}School of Mathematical Sciences and Shanghai Key Laboratory of PMMP \\
East China Normal University, Shanghai 200241, P.R.China\par \bigskip
	
\end{center}

\email{johnny.lim@usm.my}
\email{wanghang@math.ecnu.edu.cn}


\begin{abstract}
In this article, we study several closely related invariants associated to Dirac operators on odd-dimensional manifolds with boundary with an action of the compact group $H$ of isometries. In particular, the equality between equivariant winding numbers, equivariant spectral flow, and equivariant Maslov indices is established. We also study equivariant $\eta$-invariants which play a fundamental role in the equivariant analog of Getzler's spectral flow formula. As a consequence, we establish a relation between equivariant $\eta$-invariants and equivariant Maslov triple indices in the splitting of manifolds. 
\end{abstract}



\section{Introduction}

Back in the 1970s, in a series of their seminal papers \cite{1,2,3}, Atiyah, Patodi, and Singer (APS) established a well-known index theorem for Dirac operators on even-dimensional compact manifolds with boundary, for which, if compared with the case of closed manifolds (cf. \cite{4}), a \textit{boundary correction term} arises that comprises the $\eta$-invariant and the kernel of some tangential operator. Not long after, Donnelly \cite{12b} extended the APS index theorem to the case of compact group actions. About three decades later, Dai and Zhang (DZ) in \cite{11}  provided a candidate for the counterpart of the APS index theorem where they considered a certain variant of Toeplitz operators on odd-dimensional manifolds with even-dimensional boundary, and proved an index formula taking the form $\mathrm{Ind}(T_g)= \displaystyle \int \alpha - \bar{\eta}_{DZ} +\tau^\mu(\cdot,\cdot,\cdot),$ where $\alpha$ is some local index forms, $\bar{\eta}_{DZ}$ is an even-type $\eta$-invariant specifically defined for an even-dimensional boundary, and $\tau^\mu$ is some Maslov triple index (cf. \cite{15}). It is then natural to ask about an equivariant analog of DZ, or in other words, an odd analog of Donnelly's equivariant index theorem. In \cite{16}, we carried out the study and established an \textit{equivariant Toeplitz index theorem}, extending DZ's work to the case of compact group actions. In particular, it is necessary to define new notions, including equivariant DZ $\eta$-invariants, equivariant spectral flows, equivariant Maslov triple indices, etc. 
This leads to the main theme of this article: to establish the equivariant extension of some classical invariants, which will serve as a prequel to \cite{16}. 
We emphasize that the main reference for the non-equivariant objects of study here is the work by Kirk and Lesch in \cite{15}. They elegantly showed the interlinked connections between $\eta$-invariants, spectral flows, Maslov (double and triple) indices, $\zeta$-determinants, and others.  

In this article, firstly, we study an equivariant version of the classical \textit{spectral flow} first introduced by APS in \cite{3}. For our purposes, we adopt a useful machinery by Phillips \cite{22} for an equivalent description of spectral flows in terms of grid partitions. 
The upshot is that the complication of counting the crossings is simplified by merely considering the intersection of a spectral path with the vertical ``wall'' of each partitioned grid. 
This is then generalised to the equivariant case  (Definition \ref{equivspecflowdef}) where it takes value in trace when evaluated over a grid partition. 
Under this setting, the equivariant spectral flow is well-defined (Proposition \ref{spfwelld}) and is an isomorphism (Theorem \ref{specisom}) between the $H$-equivariant fundamental group of the non-trivial component of the algebra of self-adjoint equivariant Fredholm operators and the representation ring $R(H).$


On the other hand, Getzler \cite{13} shows that the spectral flow of a path can be expressed in terms of APS's $\eta$-invariants \cite{1,2}, which comprises the evaluation at the endpoints and the variation of $\eta$-invariants along the path. Kirk and Lesch \cite{15} provide another proof of this  (in terms of reduced $\eta$-invariants) via the path-lifting method to the universal cover. In this paper, we generalise this well-known formula to the equivariant case in Theorem \ref{equivGetSpecFlow}. Moreover, we also establish the gluing formula (Theorem \ref{etasplitthm}) of equivariant $\eta$-invariants along the boundary splitting via a closed hypersurface. Readers can refer to \cite{8,20} for expositions on gluing formulae for spectral flow and $\eta$-invariants.

Another spectral invariants closely related to  $\eta$-invariants are the $\zeta$-function $\zeta(D^2,s)$ of the square of Dirac operators and its variation $\dot{\zeta}(D^2,s)$ evaluated at $s=0,$ with $\dot{\zeta}(D^2,0)$ being the most sensitive one in the sense that its value changes upon a perturbation by a smoothing operator, cf.  \cite{28}. We discuss an equivariant generalisation of both of these invariants explicitly in Section \ref{sec7}, as well as defining the equivariant $\zeta$-determinant of $D$ (Definition \ref{zetadet3}). Inspired by \cite{15}, we extend our discussion of $\zeta$-determinants to the equivariant Scott-Wojciechowski theorem (Conjecture \ref{equivsctwoj}) -- we merely state this as a conjecture, instead of a theorem, as it is not our intention to re-prove \cite{24} in full details in the equivariant settings. Despite this, we reasonably justify why our settings are compatible and extend to theirs.

In addition, we also show an identification (Theorem \ref{specmas}) between the equivariant spectral flow of a path of Dirac operators and the Maslov index of a pair of possibly infinite-dimensional Lagrangian $H$-spaces associated to the Calder\'on projection and boundary conditions. As an intermediate step, we give a description of the winding number (Definition \ref{equivwinding}) of equivariant unitary paths associated to a Lagrangian pair, which corresponds to a pair of boundary conditions. These can be viewed as slight generalisations of \cite{15}, which is itself an infinite version of the classical Maslov index, e.g. in \cite{21,9}.   

As a summary, this paper is dedicated in discussing the following correspondences and providing a unified overview of ``spectral intersection counting'' via different perspectives: 
\[
\substack{\text{\normalsize equivariant} \\ \text{\normalsize spectral flow}} \quad \longleftrightarrow  \quad
\substack{\text{\normalsize equivariant} \\ \text{\normalsize Maslov index}}
\quad \longleftrightarrow  \quad
\substack{\text{\normalsize equivariant} \\ \text{ \normalsize winding number}} 
\] 
where the corresponding ingredients are respectively
\vspace{0.2cm}
\[
\substack{\text{\normalsize an equivariant path $D_P(t)$} \\ \text{\normalsize connecting $P(t)$ and $\cP_M(t)$}} \; \leftrightarrow  \;
\substack{\text{\normalsize a pair of $H$-Lagrangians} \\ \text{\normalsize $(\cL_P(t),\cL_{\cP_M}(t))$}}
\; \leftrightarrow  \;
\substack{\text{\normalsize an equivariant path of} \\ \text{ \normalsize unitary operators $T^*(t)K(t).$} }
\]

The organisation of this paper is as follows. In Section \ref{sec2}, we lay out the basic setups, assumptions, and notations. In Section \ref{sec3}, we study the equivariant spectral flow of self-adjoint equivariant Fredholm operators via grid partitions. In Section \ref{sec4}, we study the equivariant Maslov index of pairs of $h$-Lagrangians. In Section \ref{sec5}, we discuss the equivariant winding number of some associated unitary operators and its relation with equivariant Maslov indices. Combining the notions in Sections \ref{sec4} and \ref{sec5}, we study an equivariant analog of Maslov triple indices which can be expressed as an algebraic combination of equivariant Maslov indices associated to three pairs of $H$-Lagrangians. Sections \ref{sec7} and \ref{sec8} are dedicated to  the study of equivariant $\zeta$-determinants of self-adjoint elliptic equivariant Fredholm operators and its fundamental role in the equivariant analog of Scott-Wojciechowski's theorem. Lastly, in Section \ref{sec9} we discuss the equivariant $\eta$-invariants of Dirac operators on manifolds with boundary and its relation with the equivariant Maslov triple index associated to a triple of boundary projections.   \\

\vspace{-1cm}
\section{Preliminaries} 
\label{sec2}

Let $M$ be a smooth compact odd-dimensional $\spinc$ manifold with boundary $\partial M.$ Assume that an identification between a neighborhood of $\partial M$ and $(-\epsilon,0] \times \partial M$ is fixed. Let $E$ be a complex vector bundle over $M,$ equipped with a Hermitian metric and a unitary connection $\nabla^E$. For $S^c(TM)$ the spin$^c$ spinor bundle over $M$, consider the tensor product bundle $S^c(TM) \otimes E$ equipped with a compatible connection. Assume that the spin$^c$ structure is fixed on $M.$ For simplicity, we denote the space of $L^2$-sections $L^2(S^c \otimes E)$ by $L^2(E)$ with the underlying bundle $S^c$ implicitly implied. Let $D^E : L^2(E) \to L^2(E)$ be the twisted Dirac operator on $M$ which takes the form
\begin{equation}\label{eq:dirac1}
	D^E= \gamma \Big( \frac{\partial}{\partial t}+ A\Big)
\end{equation}  near the cylinder $(-\epsilon,0] \times \partial M$ equipped with the product metric. Here, $\gamma: E_{|_{\partial M}} \to E_{|_{\partial M}}$ is a bundle isomorphism such that 
\begin{equation}\label{eq:gamma1}
	\gamma^2=-\id, \;\;\; \gamma^*=-\gamma
\end{equation}
and $A: L^2(E_{|_{\partial M}}) \to L^2(E_{|_{\partial M}})$ is the twisted Dirac operator on $\partial M.$ In particular, since $\partial M$ is closed, $A$ is essentially self-adjoint and has point spectrum. 
In view of \eqref{eq:dirac1} and \eqref{eq:gamma1}, $A$ anti-commutes with $\gamma,$
\begin{equation} \label{eq:gamma2}
	\gamma A =-A\gamma.
\end{equation} 
Since $M$ is odd-dimensional, $\gamma$ may be assumed to take the form $\diag(i, -i),$ see \cite[\S 9]{7}.
As a consequence, the space $L^2(E|_{\partial M})$ splits into the direct sum of 
\[ 
\cE_i:=L^2(E_i)=\ker(\gamma-i) \quad \text{ and } \quad 
\cE_{-i}:= L^2(E_{-i})=\ker(\gamma +i)
\]
whose grading is given by $\gamma.$ 


Let $\{\lambda,\psi_\lambda \}$ be the spectral decomposition of $A.$ Let $P^+: L^2(E_{|_{\partial M}}) \to L^2(E_{|_{\partial M}})$ be the Atiyah-Patodi-Singer (APS) spectral projection defined by  
\[
P^+(\sum_{\lambda}a_\lambda \psi_\lambda ) = \sum_{\lambda >0} a_\lambda \psi_\lambda.
\]  





Let $G$ be a compact group of isometries acting on $M$ preserving the fixed $\spinc$ structure. For $h \in G,$ we consider $H= \overline{\langle h \rangle}$ to be the closed subgroup of $G$ generated by $h.$ 

\begin{assumption} 
\label{assump1}
Let $h \in H.$ Assume that $h$ commutes with the Dirac operator $D.$
\end{assumption} 

In view of \eqref{eq:dirac1}, it follows that $h \in H$ commutes with the self-adjoint operator $A$ and the Clifford multiplication $\gamma.$ In particular, there is an inner-product-preserving induced action of $h \in H$ on the bundle $L^2(E_{|_{\partial M}}).$
Let $P: L^2(E_{|_{\partial M}}) \to L^2(E_{|_{\partial M}})$ be a projection. Then, $P$ is said to be \textit{$h$-equivariant} if $P$ commutes with $h\in H.$


\begin{definition} \label{laggrass0}
	Let $h \in H.$ The $h$-equivariant self-adjoint Fredholm Grassmannian of $A,$ denoted by $\Gr_h(A),$ is the set of all $h$-equivariant projections $P: L^2(E_{|_{\partial M}}) \to L^2(E_{|_{\partial M}})$ such that 
	\begin{enumerate}
		\item $P$ is a zeroth order pseudodifferential projection; 
		\item $P^2=P$ and $P^*=P,$ i.e. $P$ is an orthogonal projection;
		\item $\gamma P \gamma^* =1-P,$ i.e. $P$ is Lagrangian;
		\item $(P,P^+)$ is a Fredholm pair, i.e. $PP^+: \text{im}(P^+) \to \text{im}(P)$ is a Fredholm operator.
	\end{enumerate}
\end{definition}

This is an immediate equivariant adaptation of \cite[Definition 2.1]{15}. Note that the classical APS projection $P^+ \notin \Gr_h(A)$ if and only if $A$ is not invertible. In fact, $A$ is in general not invertible in \eqref{eq:dirac1}. Now, we modify $P^+$ by a finite rank projection $P_{\cL},$ i.e.
\begin{equation}
	\label{eq:perturbedAPS}
	P^{\partial}(\cL):=P^+ + P_{\cL}
\end{equation} 
where $\cL \subset \ker(A)$ is a Lagrangian subspace such that $\gamma(\cL)=\cL^\perp \cap \ker(A).$ Since $h$ commutes with $P_{\cL}$, its corresponding Lagrangian $\cL$ is an $h$-space for $h \in H.$ Then, one can verify that $P^{\partial}(\cL) \in \Gr_h(A),$ which shows that the set $\Gr_h(A)$ is indeed non-empty. The projection $P^{\partial}(\cL)$ is \textit{a priori} dependent on the choice of a Lagrangian subspace $\cL.$ Henceforth, we shall assume a choice of $\cL$ is fixed and write $P^{\partial}=P^{\partial}(\cL).$ 

Consider the subset   
\begin{equation}\label{eq:grinfty1}
	\Gr^{\infty}_h(A)=\{P \in \Gr_h(A) \;|\; P-P^+ \in \Psi^{-\infty}\}
\end{equation} 
where $\Psi^{-\infty}$ stands for the algebra of smoothing operators. Recall that there is a canonical projection in $\Gr(A)$ that depends only on $M$ and $D,$ which is commonly known as the Calder\'on projection $\cP_M$ on $M.$ It is defined as the orthogonal projection onto the Cauchy-data space 
\begin{equation} \label{eq:CDsp}
	\cL_M:=r\{\ker D : H_{1/2} (E) \to H_{-1/2}(E)\} \subset L^2(E|_{\partial M})
\end{equation}
where $r$ denotes the restriction to the boundary. See \cite{7}. By assumption, $h$ commutes with $\cP_M.$  This $h$-equivariant Calder\'on projection is an element of $\Gr_h(A).$ In fact, an equivariant refinement obtained from \cite{23} implies that $\cP_M \in \Gr^{\infty}_h(A).$

Let $P \in \Gr_h(A)$ be a self-adjoint spectral projection with respect to the boundary (tangential) operator. Then, the operator $D$ equipped with $P$ whose domain restricts to 
\[
\text{Dom}(D_P) = \{ s \in L^2(E) \;|\; s \in H^1(E), P(s_{|_{\partial M}})=0 \},
\]
is self-adjoint and Fredholm. 

As explained in \cite[\S 2]{15}, every orthogonal projection $P \in \Gr_h(A)$ takes the explicit form  
\begin{equation} \label{eq:projform1}
	P=\frac{1}{2} \begin{pmatrix} \id & T^* \\ T & \id \end{pmatrix}
\end{equation} for some odd unitary operator $T : \cE_i \to \cE_{-i}.$ On the other hand, given such an odd unitary operator $T,$ the matrix in $T$ on the right side of \eqref{eq:projform1} satisfies all four conditions in Definition~\ref{laggrass0}. We shall use the notation $T$ and $\Phi(P)$ interchangeably. The following lemma is a characterisation of $h.$


\begin{lemma} 
\label{hchar1}
	Let $h \in H.$ Let $P \in \Gr_h(A).$ By Assumption~\ref{assump1}, the isometry $h$ can be characterized by the form  
	\begin{equation} \label{eq:hmat1}
		h=
		\begin{pmatrix}
			a & 0 \\
			0 & \Phi(P)a\Phi(P)^*
		\end{pmatrix}.
	\end{equation}
\end{lemma}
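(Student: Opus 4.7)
The plan is to exploit two commutation relations: first, since $h$ commutes with $\gamma$ (a consequence of Assumption~\ref{assump1} and the discussion following it), $h$ preserves the $\pm i$-eigenspace decomposition $L^2(E|_{\partial M}) = \cE_i \oplus \cE_{-i}$; and second, since $P \in \Gr_h(A)$ is $h$-equivariant, $h$ commutes with $P$. I would first observe that block-diagonality of $h$ follows directly from the first fact, so $h$ takes the form
\[
h = \begin{pmatrix} a & 0 \\ 0 & b \end{pmatrix}
\]
with $a \in \End(\cE_i)$ and $b \in \End(\cE_{-i})$. The content of the lemma is then the identification $b = \Phi(P)\, a\, \Phi(P)^*$.

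Next, I would plug the block-diagonal form of $h$ and the explicit presentation \eqref{eq:projform1} of $P$ into the identity $hP = Ph$ and compare entries. A direct multiplication gives
\[
hP = \tfrac{1}{2}\begin{pmatrix} a & a\Phi(P)^* \\ b\,\Phi(P) & b \end{pmatrix}, \qquad Ph = \tfrac{1}{2}\begin{pmatrix} a & \Phi(P)^*b \\ \Phi(P)\,a & b \end{pmatrix}.
\]
Equating off-diagonal entries yields $b\,\Phi(P) = \Phi(P)\,a$ (the other off-diagonal relation is the adjoint of this one and so is automatic). Since $\Phi(P)$ is unitary, multiplying on the right by $\Phi(P)^*$ gives $b = \Phi(P)\, a\, \Phi(P)^*$, which is precisely the claimed form.

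There is no substantive obstacle here; the result is essentially a linear-algebraic unpacking of the $h$-equivariance of $P$ in terms of the unitary parametrization $\Phi$. The only point worth emphasizing carefully in the write-up is that the commutation of $h$ with $\gamma$ (and hence the block-diagonality) follows from the assumption that $h$ commutes with $D$ together with the product-form \eqref{eq:dirac1} of $D$ near the boundary, exactly as noted just after Assumption~\ref{assump1}. Once those two commutations are in place, the lemma is immediate from comparing matrix entries.
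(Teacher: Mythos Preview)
Your proof is correct and uses the same two ingredients as the paper's argument---commutation of $h$ with $\gamma$ and with $P$, followed by a comparison of matrix blocks. The only difference is the order: the paper first imposes $hP=Ph$ on a general $2\times 2$ block matrix and afterwards uses $h\gamma=\gamma h$ to kill the off-diagonal blocks, whereas you use $h\gamma=\gamma h$ first to obtain block-diagonality and then read off $b=\Phi(P)a\Phi(P)^*$ from $hP=Ph$; your ordering is slightly more economical but the content is identical.
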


\begin{proof}
	As $h$ acts as an isometry on $M,$ which lifts to act on $\Z_2$-graded
	Hilbert spaces $L^2(E)=L^2_+(E) \oplus L^2_-(E)$ and $L^2(E_{|_{\partial M}})=L^2_+(E_{|_{\partial M}}) \oplus L^2_-(E_{|_{\partial M}})= \cE_i \oplus \cE_{-i},$ whose gradings are given by $\gamma,$ we write $h$ in the form 
	\[
	h=\begin{pmatrix}
		a & b \\
		c & d
	\end{pmatrix} .
	\] 
	Let $T=\Phi(P).$ Note that 
	\[
	hP= \frac{1}{2}\begin{pmatrix}
		a & b \\
		c & d
	\end{pmatrix} 
	\begin{pmatrix}
		\id & T^* \\
		T & \id
	\end{pmatrix} =
	\begin{pmatrix}
		a+bT & b+aT^* \\
		c+dT & d+cT^*
	\end{pmatrix}
	\] and 
	\[
	Ph= \frac{1}{2}\begin{pmatrix}
		\id & T^* \\
		T & \id
	\end{pmatrix} 
	\begin{pmatrix}
		a & b \\
		c & d
	\end{pmatrix} =
	\begin{pmatrix}
		a+T^*c & b+T^*d \\
		c+Ta & d+Tb
	\end{pmatrix}
	\] 
	By the commutative relation $Ph =hP,$ we obtain
	\[
	h=\begin{pmatrix}
		a & T^*cT^* \\
		TbT & TaT^*
	\end{pmatrix}.  
	\] 
	On the other hand, by Assumption 1, $h$ must also commute with $\gamma = \diag(i,-i)$  which satisfies \eqref{eq:gamma1}. Then, we can verify by a direct computation that $h$ commutes with $\gamma$ if and only if 
	$TbT=0$ and  $T^*cT^*=0.$ Thus, the isometry $h$ is a diagonal matrix 
	\[
	h=\begin{pmatrix}
		a & 0 \\
		0 & TaT^*
	\end{pmatrix} \in \cU(\cE_i)\oplus \cU(\cE_{-i}).  
	\] 
\end{proof}

\begin{remark}
	In general,  the explicit form of the isometry $h$ does not depend on a particular projection. Hence, shall no confusion occur we will denote $h=\diag(a,WaW^*)$ for some $W \in \cU(\cE_i,\cE_{-i}).$  
\end{remark}

Let $(P,Q)$ be a Fredholm pair, i.e. $PQ: \im(Q) \to \im(P)$ is a Fredholm operator. By Assumption ~\ref{assump1}, the pair is compatible with the $h$-action, i.e. $(PQ)h=h(PQ)$ for all $h \in H.$ In view of \eqref{eq:projform1}, the product of the corresponding matrices is well-defined. In this case, for $i=1,2,3,$ let $T_i$ be the corresponding odd unitary operators  in defining $P,Q$ and $h$ respectively, then $T^*_iT_j$ must commute with $a$ for all $i,j=1,2,3$ with $i\neq j$ and $T_3^*(T_1T^*_2)T_3$ must also commute with $a.$   The explicit formula for the general matrix multiplication $P_\gamma \cdots P_\beta P_\alpha$ is significantly more convoluted to state so we leave this to interested readers. 

\begin{lemma}\label{PQlemma1}
	Let $h=\diag(a,WaW^*) \in H.$ Let $P,Q \in \Gr_h(A)$ be given by \eqref{eq:projform1}, i.e. 
	$$P=\frac{1}{2} \begin{pmatrix} \id & T^* \\ T & \id \end{pmatrix}, \quad Q=\frac{1}{2} \begin{pmatrix} \id & S^* \\ S & \id \end{pmatrix}$$ for some odd unitary operators $T,S,W : \cE_{i} \to \cE_{-i}.$ Then,
	\begin{center}
		\begin{enumerate}
			\item $(P,Q)$ is a Fredholm pair if and only if $-1 \notin \spec_{ess}((a-\id)+T^*S)),$
			\item $(P,Q)$ is an invertible pair (i.e. $PQ: \im(Q) \to \im(P)$ is invertible) if and only if $-1 \notin \spec((a-\id)+aT^*S),$
			\item $\ker(P) \cap \im(Q) \cong  \ker (a(\id + T^*S)),$ 
			\item $h(P-Q) \in \Psi^{-\infty}$ if and only if $(\id -T^*S)a \in \Psi^{-\infty}.$ 
		\end{enumerate} 
	\end{center}
\end{lemma}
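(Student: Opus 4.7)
The plan is to prove all four items by a direct block-matrix calculation, using the $2\times 2$ decomposition $L^2(E_{|_{\partial M}}) = \cE_i \oplus \cE_{-i}$ together with the block forms of $P, Q$ and $h$. The classical (non-equivariant) versions of (1)--(4) are standard, so the task is really to read off the equivariant refinements by tracking the $a$-factors carefully.

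First, I would establish the key commutation relation. Expanding $hP = Ph$ in block form with $h = \diag(a, WaW^*)$ yields the off-diagonal equation $WaW^* T = Ta$, i.e.\ $WaW^* = TaT^*$; similarly $hQ = Qh$ gives $WaW^* = SaS^*$. Combining these two identities produces $TaT^* = SaS^*$, which rearranges to
\[
aT^*S = T^*S\, a,
\]
so the unitary $T^*S : \cE_i \to \cE_i$ commutes with the $h$-action $a$. This commutation is the algebraic backbone of every statement in the lemma.

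Next, I would parameterize the relevant subspaces as graphs over $\cE_i$:
\[
\im P = \{(v, Tv)^T : v \in \cE_i\}, \qquad \ker P = \{(v, -Tv)^T : v \in \cE_i\},
\]
and analogously for $Q$ with $S$ in place of $T$. Under the induced identifications $\cE_i \cong \im P$ (via $u \mapsto (u, Tu)^T$) and $\cE_i \cong \im Q$ (via $v \mapsto (v, Sv)^T$), a direct block calculation shows that $PQ : \im Q \to \im P$ corresponds to the operator $\tfrac{1}{2}(\id + T^*S) : \cE_i \to \cE_i$, while the $H$-action on both identified copies of $\cE_i$ is simply multiplication by $a$. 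With this reduction, items (1) and (2) become statements about the essential spectrum and spectrum of $\id + T^*S$, which upon shifting and/or multiplying by the unitary $a$ (using that $a$ commutes with $T^*S$) take the equivalent forms $(a-\id) + T^*S$ and $(a-\id) + aT^*S$ respectively. For (3), an element of $\ker P \cap \im Q$ corresponds to $v \in \cE_i$ with $(v, Sv)^T = (v, -Tv)^T$, i.e.\ $v \in \ker(T + S) = \ker(\id + T^*S)$ (since $T$ is unitary), and the invertibility of $a$ gives $\ker(\id + T^*S) = \ker(a(\id + T^*S))$, with the $a$-factor recording the natural $h$-action.

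Finally, for (4), I would compute directly
\[
P - Q = \frac{1}{2}\begin{pmatrix} 0 & T^* - S^* \\ T - S & 0 \end{pmatrix},
\]
so that $P - Q \in \Psi^{-\infty}$ reduces to $T - S \in \Psi^{-\infty}$. Since $T$ is a unitary pseudodifferential operator of order $0$, the factorization $T - S = T(\id - T^*S)$ and the fact that $\Psi^{-\infty}$ is a two-sided ideal give $T - S \in \Psi^{-\infty}$ iff $\id - T^*S \in \Psi^{-\infty}$. Multiplying on the left by $h = \diag(a, TaT^*)$ and pushing the resulting $a$ through via the commutation $aT^*S = T^*S a$ produces the equivalent condition $(\id - T^*S)a \in \Psi^{-\infty}$. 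The main obstacle throughout will not be conceptual but rather the careful bookkeeping of the $a$-factors and of the various commutations needed to bring each equivariant condition into the precise form asserted in (1)--(4).
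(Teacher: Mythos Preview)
Your approach is essentially the paper's own: both proofs parameterize $\im(Q)$ as the graph $\{(x,Sx)^T\}$ and reduce the composition to the scalar block $\tfrac12(\id+T^*S)$ on $\cE_i$, then read off (1)--(3) from the spectrum/kernel of this block and (4) from the off-diagonal of $P-Q$. The only organisational difference is that the paper multiplies by $h$ \emph{before} the block computation (writing $hPQ$ and $h(P-Q)$ directly, using that $T^*W$ and $W^*T$ commute with $a$), whereas you compute $PQ$ and $P-Q$ first and then bring in $a$ via the commutation $aT^*S=T^*Sa$; the outcome is the same.

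One small caution: your sentence ``upon shifting and/or multiplying by the unitary $a$ \ldots\ take the equivalent forms $(a-\id)+T^*S$ and $(a-\id)+aT^*S$'' is too quick for item~(1) as literally stated. Multiplying $\id+T^*S$ by the unitary $a$ gives $a+aT^*S=\id+[(a-\id)+aT^*S]$, which is exactly the form the paper derives for \emph{both} (1) and (2); the form $(a-\id)+T^*S$ (without the $a$ on $T^*S$) is not in general equivalent to Fredholmness of $(P,Q)$ (take $a=-\id$). This is a typographical slip in the statement of (1), not a defect in your argument, but you should not claim to have derived that particular form.
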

\begin{proof}
	We follow an argument of \cite{15}. Let $L^2(E|_{\partial M})=\cE_i \oplus \cE_{-i}.$ Then, any element in $L^2(E|_{\partial M})$ can be written as $(x, Sy)^T$ for $x,y \in \cE_i$ and for some $S:\cE_i \to \cE_{-i}.$ One verifies that $\im(Q)$ is the set $\{(x, Sx)^T\;|\; x\in \cE_{i}\}.$ Then, the composition $h(PQ)$ takes the form 
	\begin{equation}\label{eq:hPQ1}
		hPQ=\frac{1}{2} \begin{pmatrix} a(\id + T^*S) \\ Ta(\id+ T^*S)\end{pmatrix}
	\end{equation} 	
	where we use the fact that both $T^*W$ and its adjoint $W^*T$ commute with $a.$ Hence, $(P,Q)$ is a Fredholm pair if and only if $a(\id + T^*S)=\id + [(a-\id) + aT^*S]$ is Fredholm, which is exactly when $-1 \notin \spec_{ess}((a-\id)+aT^*S).$  Similarly, $(P,Q)$ is an invertible pair if and only if the condition $-1 \notin \spec((a-\id)+aT^*S)$ holds. For (3), in view of  \eqref{eq:hPQ1} the intersection $\ker(P) \cap \im(Q)$ coincides with $\ker(a(\id + T^*S)).$ For the last part, one verifies that 
	\begin{equation}
		h(P-Q)=\frac{1}{2} \begin{pmatrix} 0 & a(T^* - S^*) \\ (T-S)a & 0\end{pmatrix}.
	\end{equation} Since $T,S$ and $a$ are unitary, the operator $(T-S)a$ is smoothing if and only if $(\id - T^*S)a$ is smoothing. 
\end{proof}

Let $\cH$ be a complex separable Hilbert space with a fixed basis. Let $H$ be the compact group of isometries on $M.$ Let 
\begin{equation} 
	\label{eq:Hprime}
	\cH'=L^2(H,\cH)
\end{equation} be the space of square integrable functions on $H$ with values in $\cH.$ These $L^2$ functions are integrated with respect to an $H$-invariant Haar measure.  By the Peter-Weyl theorem, the $H$-module $\cH'$ can be expressed as an isomorphism 
\begin{equation}
	L^2(H,\cH) \cong  \sum_i (\dim V_i) V_i \otimes \cH
\end{equation} where the sum runs through irreducible complex representations $V_i$ of $H$ for all $i.$ Let $\cB(\cH'), \cF(\cH'), GL(\cH')$ and  $\cU(\cH') $ be the Banach algebra of bounded operators, the subalgebra of Fredholm operators of the form $I+\cK,$ the general linear group and the unitary group of $\cH'$ respectively.  The natural $H$-action on $\cH'$ induces one on $\cB(\cH'),$ which is given by the commutative diagram

\begin{center}
	\begin{tikzcd}
		\cH' \arrow[r,"h"] \arrow[d,"T"] & \cH' \arrow[d,"T^h"] \\
		\cH' \arrow[r,"h"] & \cH',
	\end{tikzcd}
\end{center}
i.e. 
\begin{equation}
	T^h(f)=hT(h^{-1}f) 
\end{equation} for $f \in \cH'$ and $T,T^h \in \cB(\cH').$ Such an $H$-action respects the algebra structure and is continuous with respect to the norm topology of $\cB(\cH').$ The induced actions on the spaces $\cF(\cH'), GL(\cH')$ and $\cU(\cH')$ are defined similarly. 



Before proceeding to the next section, we give the following definition. Recall that a function $f$ is called $H$-equivariant if and only if $fh(x)=hf(x)$ for all $x\in X.$

\begin{definition}
Let $X$ be an $H$-equivariant CW-complex. Let $x_0 \in X.$ Define the \textit{equivariant fundamental group} of $X$ based at $x_0$ to be the set of  homotopy classes of $H$-equivariant continuous loops at $x_0,$ where the equivalence is by \textit{equivariant homotopy}, that is, 
\[
\pi_1(X,x_0)_H = \left\{f:[0,1] \to X 
 \mid f \text{ is } H\text{-equivariant, continuous}, f(0)=f(1)=x_0\right\}/ \sim
\]
where $\sim$ is the  relation given by: let $f,g : [0,1] \to X$ be $H$-equivariant continuous functions such that $f(0)=f(1)=x_0$ and $g(0)=g(1)=x_0.$ Then, $f\sim g$ if and only if there exists an $H$-equivariant continuous function $F:[0,1] \times [0,1] \to X$ such that $F(0,s)=F(1,s)=x_0,$  $F(t,0)=f(t),$ and $F(t,1)=g(t).$
\end{definition}

We remark that if $H$ acts trivially on a space $Y,$ then $h(y)=y$ for $y \in Y.$ Thus, in this case $f$ being $H$-equivariant on $Y$ is equivalent to $f(y)=h(f(y))$ for all $y\in Y,$ and so $f(y)$ lies in the fixed point set $Y^H.$ In particular, in the equivariant homotopy relation above, $H$ is assumed to act trivially on $Y=[0,1].$


\section{Equivariant spectral flow} \label{sec3}


Let $\widehat{\cF}(\cH')$ be the subalgebra of $\cF(\cH')$ of all self-adjoint equivariant Fredholm operators. By \cite{5}, the space $\widehat{\cF}(\cH')$ consists of three connected components: 
\begin{equation}
	\widehat{\cF}(\cH')=\widehat{\cF}_+(\cH')\oplus \widehat{\cF}_-(\cH') \oplus \widehat{\cF}_*(\cH').
\end{equation} Here, $\widehat{\cF}_{+}(\cH')$ (resp. $\widehat{\cF}_{-}(\cH')$) consists of those operators whose essential spectrum lie \textit{entirely} in $\R^{+}$ (resp. $\R^{-}$); whereas $\widehat{\cF}_*(\cH')$ consists of those operators whose essential spectrum lie \textit{simultaneously} in both $\R^{+}$ and $\R^-.$ 

In fact, for an element $T\in \widehat{\cF}_{+}(\cH')$ and for all $0 \leq s \leq 1,$ the linear homotopy 
\[
T(s)=(1-s)T + s\id
\] contracts $T$ to the identity operator $\id.$ Similarly, the homotopy $T(s)=(1-s)T + s(-\id)$ contracts $T \in \widehat{\cF}_{-}(\cH')$ to $-\id.$ This concludes the following statement.

\begin{lemma}
	Both of the spaces $\widehat{\cF}_{\pm}(\cH')$ are contractible. 
\end{lemma}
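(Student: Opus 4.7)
The homotopy to use is already suggested in the paragraph preceding the lemma: set
\[
F_+ : \widehat{\cF}_+(\cH') \times [0,1] \longrightarrow \widehat{\cF}(\cH'), \qquad F_+(T,s) = (1-s)T + s\,\id,
\]
and analogously $F_-(T,s) = (1-s)T + s(-\id)$ for the negative component. The plan is to verify three things: (i) for each fixed $s \in [0,1]$, the operator $F_\pm(T,s)$ actually lands in $\widehat{\cF}_\pm(\cH')$; (ii) the map $F_\pm$ is jointly continuous in the norm topology on $\cB(\cH')$; (iii) it provides a contraction onto the base points $\pm\id$. Items (ii) and (iii) are essentially immediate from the definition of the linear homotopy, so the only substantive point is (i).

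For (i) I would verify the three defining properties of $\widehat{\cF}_+(\cH')$ pointwise in $s$. Self-adjointness of $F_+(T,s)$ is clear since $T$ and $\id$ are self-adjoint and the coefficients $1-s$, $s$ are real. Equivariance of $F_+(T,s)$ with respect to the $H$-action defined in \eqref{eq:Hprime} and the discussion thereafter is immediate because both $T$ and $\id$ are equivariant and the $H$-action respects the linear structure of $\cB(\cH')$. The remaining point—Fredholmness together with the sign condition on the essential spectrum—is the heart of the verification.

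For the essential spectrum, recall that for a bounded self-adjoint operator $T$ and real scalars $\alpha, \beta$ with $\alpha \neq 0$ one has $\esp(\alpha T + \beta\,\id) = \alpha\,\esp(T) + \beta$. Since $T \in \widehat{\cF}_+(\cH')$ is self-adjoint Fredholm, $0 \notin \esp(T)$ and $\esp(T) \subset (0,\infty)$. Hence for $s \in [0,1)$,
\[
\esp\bigl(F_+(T,s)\bigr) = (1-s)\,\esp(T) + s \;\subset\; (s, \infty) \subset \R^+,
\]
and for $s = 1$ we have $F_+(T,1) = \id$ with $\esp(\id) = \{1\} \subset \R^+$. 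In every case $0 \notin \esp(F_+(T,s))$, so $F_+(T,s)$ is Fredholm with essential spectrum in $\R^+$, i.e.\ lies in $\widehat{\cF}_+(\cH')$. The argument for $\widehat{\cF}_-(\cH')$ is identical with the signs reversed.

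Finally, joint continuity of $F_\pm$ follows from $\|F_\pm(T',s') - F_\pm(T,s)\| \le (1-s')\|T'-T\| + |s'-s|\,(\|T\| + 1)$, and the endpoint conditions $F_\pm(T,0) = T$, $F_\pm(T,1) = \pm\id$ exhibit $F_\pm$ as a strong deformation retraction of $\widehat{\cF}_\pm(\cH')$ onto the point $\{\pm\id\}$. I do not expect a genuine obstacle here; the only potential subtlety is keeping track of the essential spectrum at the endpoint $s=1$, which is handled by noting that $\esp(\id) = \{1\}$ sits comfortably inside $\R^+$ (respectively $\esp(-\id) = \{-1\} \subset \R^-$).
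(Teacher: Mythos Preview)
Your proposal is correct and follows exactly the approach the paper takes: the paper's proof is precisely the linear homotopy $T(s)=(1-s)T+s(\pm\id)$ stated in the paragraph preceding the lemma, with no further detail given. You have simply filled in the routine verifications (self-adjointness, equivariance, the affine shift of the essential spectrum, and joint continuity) that the paper leaves implicit.
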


Let $\pi : \cB(\cH') \to \cQ(\cH')=\cB(\cH')/\cK(\cH')$ be the canonical projection into the Calkin algebra of $\cH'.$ Let $\cJ$ be the group of invertible elements of $\cQ(\cH').$  By definition, we have $\cF=\pi^{-1}(\cJ).$ Let $\widehat{\cJ} \subset \cJ$ be the subspace that contains all self-adjoint invertible elements of $\cQ(\cH').$ Then, by \cite[Lemma 16.3]{7} the projection $\pi$ restricts to a homotopy equivalence $\widehat{\cF}_*(\cH') \xrightarrow{\sim} \widehat{\cJ}_*(\cH')$ where $\widehat{\cJ}_*(\cH')$ is the non-trivial connected component of $\widehat{\cJ}.$ More precisely, 
\[
\widehat{\cJ}_*(\cH') = \{j \in \cQ(\cH') | j^*=j, \text{spec}(j) \cap \R^+ \neq \emptyset \textit{ and } \text{spec}(j) \cap \R^- \neq \emptyset\}
\] 
%
where the pre-image of each $j \in \widehat{\cJ}_*$ takes the form $\frac{1}{2}(T+T^*) \in \widehat{\cF}_*$ for which $\pi(T)=\pi(T^*)=j.$ The induced $H$-action on $\widehat{\cJ}_*$ is given by that  restricted on $\widehat{\cF}_*$ and by the equivariance of projections. 

Moreover, as with the case where invertible elements can be retracted into unitary elements, the standard $C^*$-algebra retraction 
\begin{equation}\label{eq:retract1}
	j_s= j \Big((1-s)\id+\frac{s}{\sqrt{j^*j}}\Big),  \;\;\; \text{for } s \in [0,1],
\end{equation}
defines the unitary groups 
\begin{align*}
	\widehat{J}(\cH')=\{j \in \cQ(\cH')\;|\; j^*=j, j^2=\id, j \neq \pm \id \}, \\
	\widehat{J}_*(\cH')=\{j \in \widehat{J}(\cH')\;|\; \spec(j)=\{-1,+1\}\}.
\end{align*}
By the unitary retraction \eqref{eq:retract1}, we conclude that $\widehat{\cF}_*(\cH')$ is homotopy equivalent to $\widehat{J}_*(\cH').$ See \cite[Theorem 16.4]{7}.

For $h \in H,$ let $P^+$ be the $h$-equivariant positive spectral projections of $D$  onto the Lagrangian subspace $\cL_{P^+}(\cH').$ Let $P^-$ be the complementary projection of $P^+$ onto the Lagrangian subspace $\cL_{P^-}(\cH').$ Then,  $P^+-P^-$ defines an involution in $\widehat{J}_*(\cH').$ Choose and fix a finite dimensional projection $P_k$ of $P^-$ onto an $h$-subspace $V_k$ of dimension $k$ of Ran($P^-$). Then, in view of $\widehat{\cF}_*(\cH') \simeq \widehat{J}_*(\cH'),$ every equivariant Fredholm operator $T \in \widehat{\cF}_*(\cH')$ corresponds to some involution up to finite rank projection $P^+ -P^- + P_k.$ Then, any element of $\pi_1(\widehat{\cF}_*(\cH'))_H$ is a family $\{B_k(t):=P^+-P^-+2tP_k\}_{t \in S^1}.$ For simplicity, we shall denote a general element of $\pi_1(\widehat{\cF}_*(\cH'))_H$ by $B_t,$ to mean a family of  self-adjoint equivariant Fredholm operators labelled by $t \in S^1.$

\begin{assumption}
	By choosing a good partition of a grid, we mean to choose a partition $0 < t_0 < \cdots <t_n=1$ of the `$x$-axis' the interval $[0,1],$ and a partition of the `$y$-axis', usually a set of real number $\{a_1,a_2,\ldots,a_n\}$ which is the image of some continuous function, such that the function is non-singular at the four corner vertices. This can always be achieved by some $\epsilon$-perturbations. A refinement of a chosen partition refers to the $1/2^i$-division of the partition into smaller grids such that it sums to the original grid.  
\end{assumption}

\begin{definition} \label{equivspecflowdef}
	Let $h \in H.$  By Assumption 2, fix a good partition of a grid where the $x$-axis is the interval $[0,1]$ and the $y$-axis is a set $\{a_j\}^n_{j=1}$ of the image of spectral projections. For each $j=1,2,\cdots,n,$ let $P^+_j$ be the positive spectral projection of $B_t \in \pi_1(\widehat{\cF}_*(\cH'))_H$ onto the $y$-axis over the partitioned grid with the subinterval $[t_{j-1},t_j]$ of $[0,1].$ Then, we define the $h$-equivariant spectral flow of $B_t$ by
	\begin{align} \label{eq:hspec1}
		\spf_h : \pi_1&(\widehat{\cF}_*(\cH'))_H \longrightarrow \C \nonumber \\
		\spf_h (B_t) = \sum^n_{j=0} &\Big[\tr\big(h|_{\E_j(t_j)}\big) - \tr\big(h|_{\E_j(t_{j-1})}\big)\Big]
	\end{align}
	where $\E_j(t_i)$ denotes the corresponding $P^+_j(t_i)$-eigenspaces for $i=j-1,j.$
\end{definition}

\begin{proposition} \label{homotopyinv}
For $h \in H,$ the $h$-equivariant spectral flow is a homotopy invariant. That is, for a homotopy $H(s,t): [0,1] \to \pi_1(\widehat{\cF}_*(\cH'))_H$ between $B(t)$ and $B'(t)$ such that $B(0)=B'(0)$ and $B(1)=B'(1),$ we have
	\[
	\spf_h(B(t)) = \spf_h(B'(t)). 
	\]
\end{proposition}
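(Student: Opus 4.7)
The plan is to adapt the standard two-parameter grid argument for homotopy invariance of spectral flow (as in Phillips' paper) to the equivariant setting, exploiting that $h$ commutes with every operator in the homotopy so that the relevant finite-rank difference projections are $H$-equivariant and thus have well-defined $h$-traces.

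First, view the homotopy $H : [0,1]_s \times [0,1]_t \to \widehat{\cF}_*(\cH')$ as a continuous map on the unit square, with $H(0,t)=B(t)$, $H(1,t)=B'(t)$, and $H(s,0),H(s,1)$ constant in $s$. By continuity of the spectrum in the gap topology on $\widehat{\cF}_*(\cH')$ and compactness of $[0,1]^2$, choose a sufficiently fine product partition $0=s_0<\dots<s_m=1$ and $0=t_0<\dots<t_n=1$ such that on each closed cell $C_{ij}=[s_{i-1},s_i]\times[t_{j-1},t_j]$ there exists a real number $a_{ij}\notin\spec(H(s,t))$ for all $(s,t)\in C_{ij}$; a small $\eps$-perturbation ensures the partition is good in the sense of Assumption~2. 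For each cell let $P_{ij}(s,t)$ denote the spectral projection of $H(s,t)$ onto the eigenvalues exceeding $a_{ij}$, which is then continuous in $(s,t)\in C_{ij}$ in the operator norm.

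Next, observe that by Assumption~\ref{assump1} the isometry $h$ commutes with $H(s,t)$, hence with $P_{ij}(s,t)$, so for any two points $q,q'\in C_{ij}$ the difference $P_{ij}(q)-P_{ij}(q')$ is a finite-rank $H$-equivariant operator, and the quantity $\tr\!\bigl(h\cdot(P_{ij}(q)-P_{ij}(q'))\bigr)$ is well-defined. Going around the boundary of a single cell $\partial C_{ij}$ and telescoping the trace differences along the four edges yields
\[
\tr\bigl(h\cdot(P_{ij}(q_1)-P_{ij}(q_2))\bigr)+\dots+\tr\bigl(h\cdot(P_{ij}(q_4)-P_{ij}(q_1))\bigr)=\tr(h\cdot 0)=0,
\]
so the equivariant spectral flow around $\partial C_{ij}$ vanishes. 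This is the equivariant analog of the standard fact that on a small enough region a single spectral cut-off suffices and the flow around any contractible loop is zero.

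Finally, assemble the cells: summing the vanishing contributions $\spf_h(\partial C_{ij})=0$ over all $i,j$ and observing that interior edges are traversed twice with opposite orientations and cancel leaves exactly the spectral flow along $\partial([0,1]^2)$. Since the left ($t=0$) and right ($t=1$) vertical edges are constant paths (endpoints are fixed), they contribute zero. Hence the bottom ($s=0$, giving $\spf_h(B)$) equals the top ($s=1$, giving $\spf_h(B')$). The one technical subtlety---and the main obstacle---is reconciling different cut-offs $a_{ij}\neq a_{i'j'}$ on a shared edge; this is handled by noting that along such a shared edge both cut-offs lie in the resolvent, so the two candidate spectral projections differ by a continuous family of finite-rank equivariant projections whose $h$-trace is a character-valued, hence locally constant function of the parameter, and its contribution therefore telescopes to zero when summed over the closed boundary. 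Once this compatibility is established the telescoping is rigorous and homotopy invariance follows.
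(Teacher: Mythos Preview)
Your proposal is correct and follows the same two-parameter grid strategy as the paper, which simply cites Phillips' argument and notes that compactness of $I\times I$ allows one to cover it by finitely many preimages $H^{-1}(N_i)$ and break the homotopy into shorter pieces on each grid cell. You have supplied considerably more detail than the paper's brief sketch, including the compatibility check on shared edges via local constancy of the $h$-trace of the finite-rank difference projections.
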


\begin{proof}
	The proof follows verbatim from \cite[Proposition 3]{22}. In particular, the main strategy is to break a homotopy $H: I \times I \to \widehat{\cF}_*(\cH')$ into the sum of `shorter' homotopies in each grid. This is achievable due to compactness and we can cover $I \times I$ by a finite set of preimages $\{H^{-1}(N_i)\}_i$ where each $N_i$ is a neighbourhood of $B(t)$ which covers some grid $J_i\times J_j.$  
\end{proof}

Proposition~\ref{homotopyinv} allows us to consider continuous deformations of spectrum with fixed points. In particular, given a continuous path which crosses the $x$-axis over a region in a partitioned grid, as in Figure 1, we can consider a homotopy of paths with fixed endpoints, as in Figure 2. By homotopy invariance, the equivariant spectral flows of the green line and the blue line are equal, i.e. the net `counting' is the same as that of in Figure 3. The local equivariant spectral flow is thus given by $\tr(h|_{\E(t_j)}) - \tr(h|_{\E(t_{j-1})}).$

\begin{figure}[h]
	\centering
	\begin{minipage}{.32\textwidth}
		\centering	
        \includegraphics[width=.9\linewidth]{./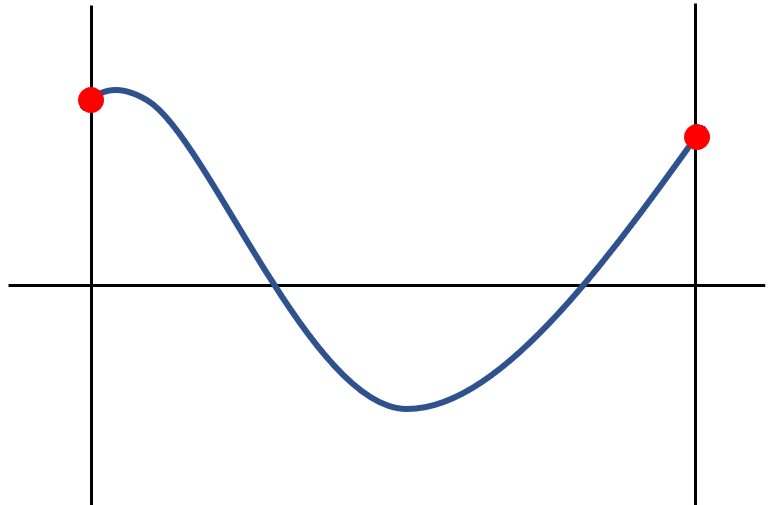}
		\captionof{figure}{\footnotesize A path with crossings, and endpoints in the same quadrant.}
	\end{minipage} 
	\begin{minipage}{.32\textwidth}
		\centering
		\includegraphics[width=.9\linewidth]{./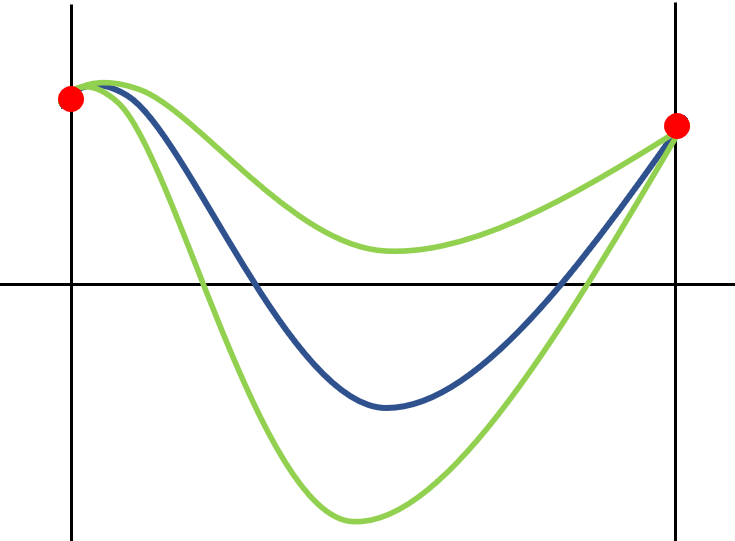}
		\captionof{figure}{\footnotesize A path homotopy relative endpoints in the same quadrant.}
	\end{minipage}
	\begin{minipage}{.32\textwidth}
		\centering
		\includegraphics[width=.9\linewidth]{./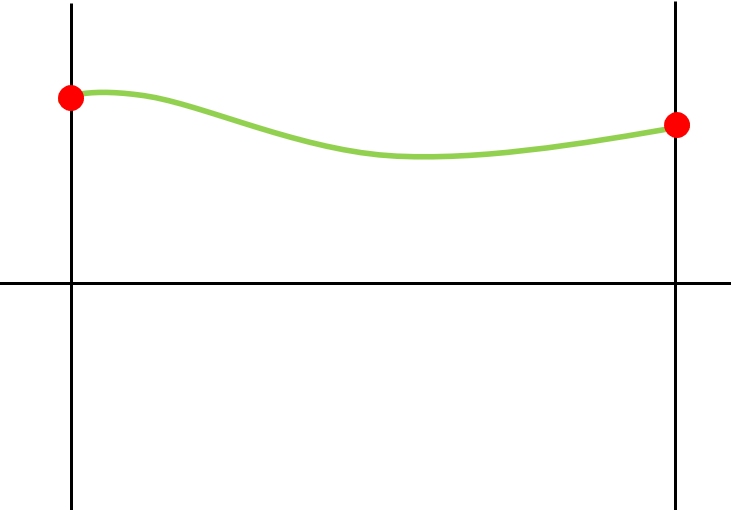}
		\caption{\footnotesize The resulting path without crossings.}
	\end{minipage}
\end{figure} 

The elegant method of Phillips \cite{22} simplifies the notion of spectral flow introduced by Atiyah-Patodi-Singer \cite{3} which measures the net change of crossings across zero counted with multiplicities. In particular, Phillips's counting method no longer depends on the crossings over the $x$-axis because via homotopy we only need to consider the crossings on the \textit{positive} `quadrant' with respect to the $y$-axis. In the equivariant case, this corresponds to taking the trace of the induced linear map on the $\E(t_{j-1})$ and $\E(t_j)$ respectively.

In general, there are much more complicated paths of spectrum. However, by Phillips's method, over an appropriate partition, we may reduce this to only three simple types of path crossings and they span all other possible combinations. The first type has already been described above. The second type are shown in Figure 4-6. They depict the net effect of a crossing from the negative to the positive `quadrant'. The local equivariant spectral flow is $\tr(h|_{\E(t_j)}).$ Similarly, Figure 7-9 depicts the net effect of a crossing from the positive to the negative `quadrant', for which the local equivariant spectral flow is $-\tr(h|_{\E(t_{j-1})}).$ 
\begin{figure}[h]
	\centering
	\begin{minipage}{.32\textwidth}
		\centering
		\includegraphics[width=.9\linewidth]{./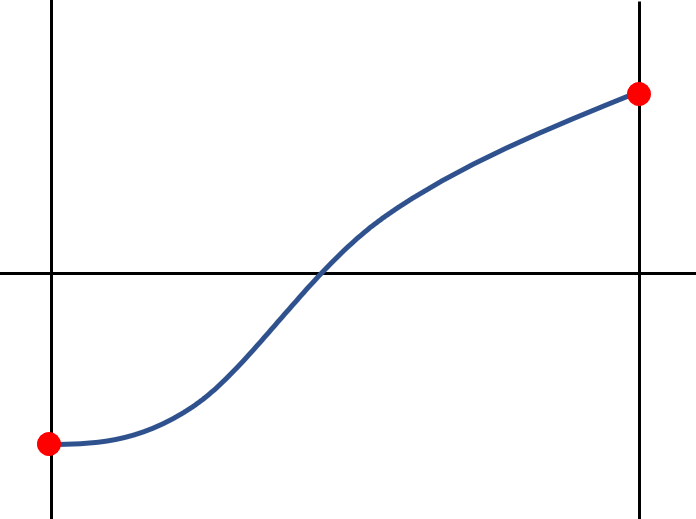}
		\captionof{figure}{\footnotesize A path with crossings, and endpoints in different quadrants.}
	\end{minipage}
	\begin{minipage}{.32\textwidth}
		\centering
		\includegraphics[width=.9\linewidth]{./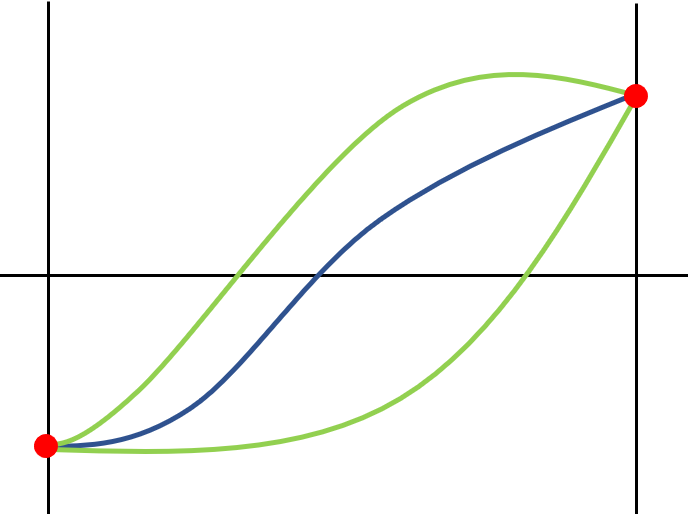}
		\captionof{figure}{\footnotesize A path homotopy relative endpoints in different quadrants.}
	\end{minipage}
	\begin{minipage}{.32\textwidth}
		\centering
		\includegraphics[width=.9\linewidth]{./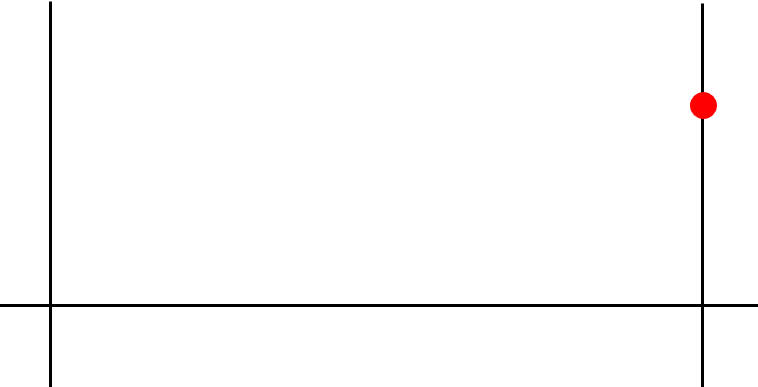}
		\captionof{figure}{\footnotesize Net effect of crossings from negative to positive quadrants.}
	\end{minipage}
\end{figure} 
\begin{figure}[h]
	\centering
	\begin{minipage}{.32\textwidth}
		\centering
		\includegraphics[width=.9\linewidth]{./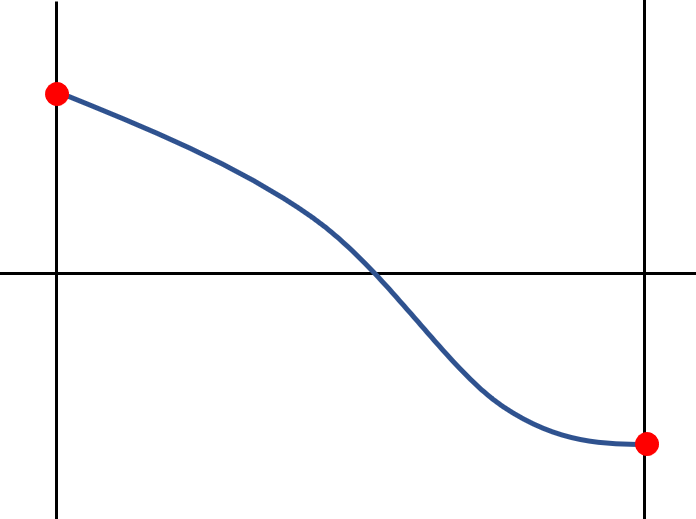}
		\captionof{figure}{\footnotesize Another case of crossings with endpoints in different quadrants.}
	\end{minipage}
	\begin{minipage}{.32\textwidth}
		\centering
		\includegraphics[width=.9\linewidth]{./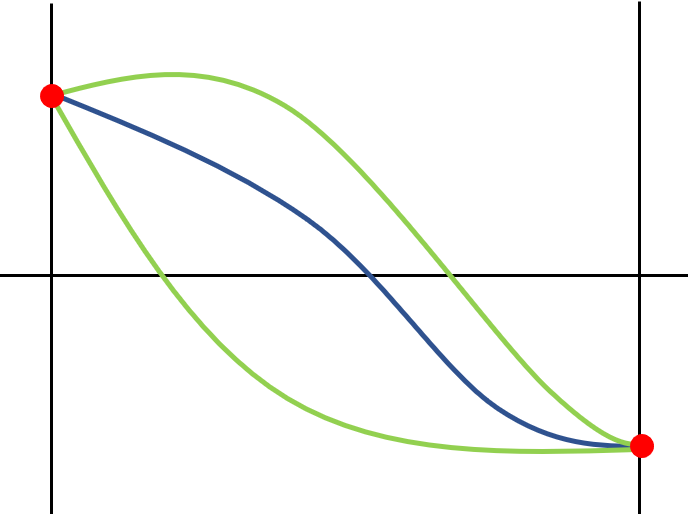}
		\captionof{figure}{\footnotesize A path homotopy relative endpoints in different quadrants.}
	\end{minipage}
	\begin{minipage}{.32\textwidth}
		\centering
		\includegraphics[width=.9\linewidth]{./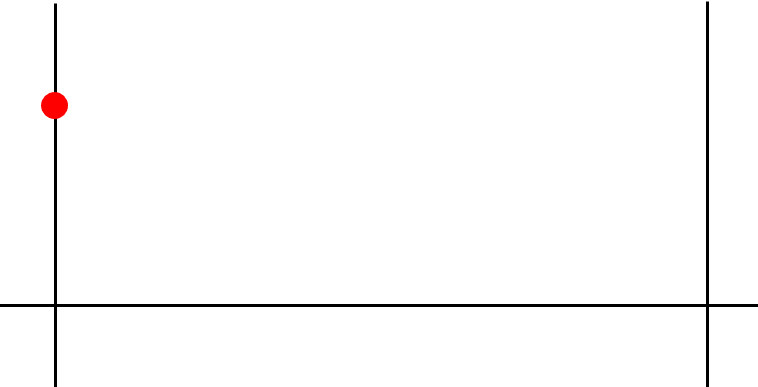}
		\captionof{figure}{\footnotesize Net effect of crossings from positive to negative quadrants.}
	\end{minipage}
\end{figure} 

\begin{figure}[h]
	\centering
	\begin{minipage}{.32\textwidth}
		\centering
		\includegraphics[width=.9\linewidth]{./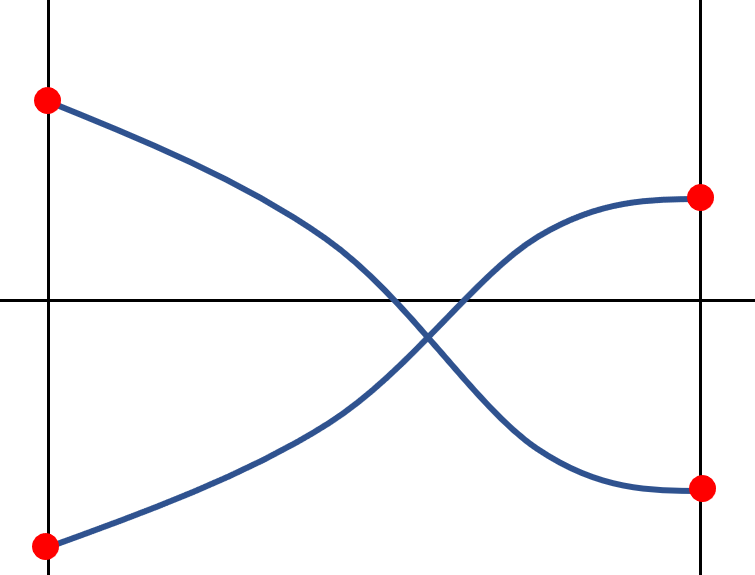}
		\captionof{figure}{\footnotesize A mixed case of crossings across different quadrants.}
	\end{minipage}
	\begin{minipage}{.32\textwidth}
		\centering
		\includegraphics[width=.9\linewidth]{./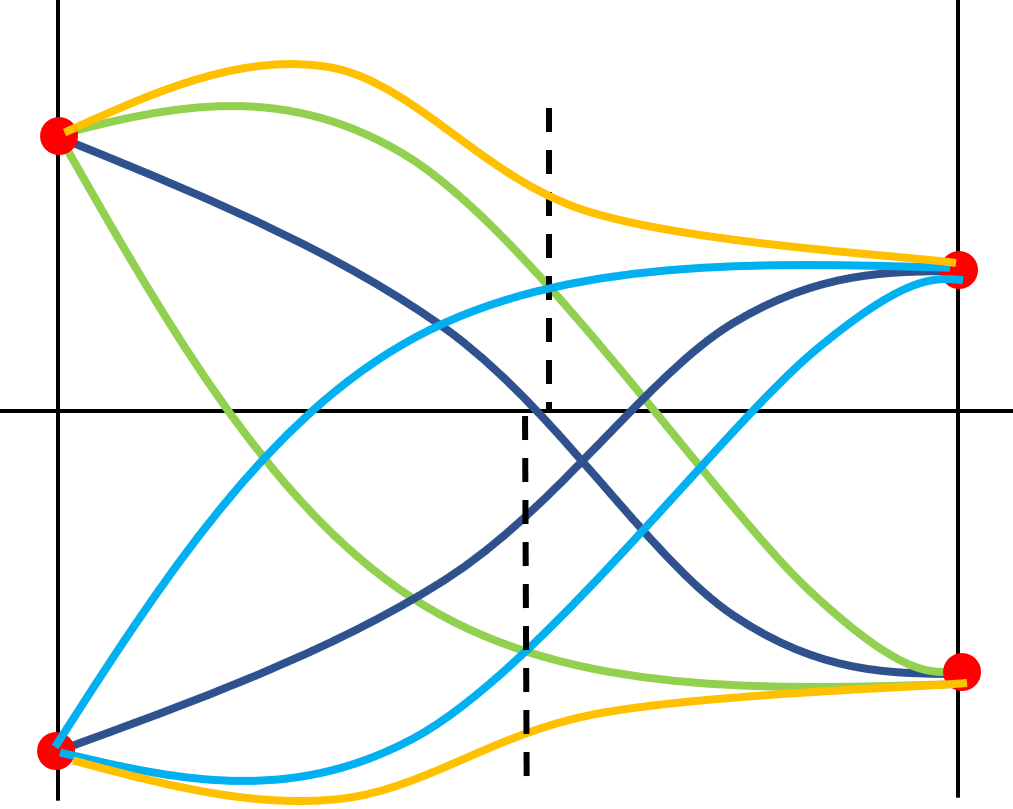}
		\captionof{figure}{\footnotesize Path homotopies of the mixed case.}
	\end{minipage}
	\begin{minipage}{.32\textwidth}
		\centering
		\includegraphics[width=.9\linewidth]{./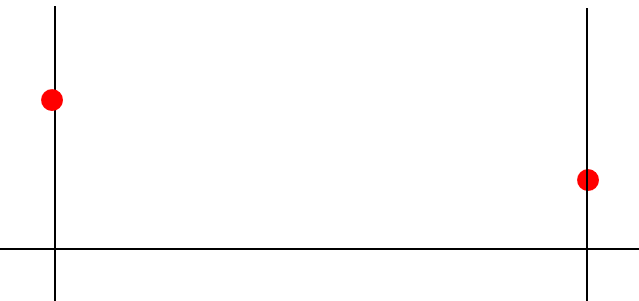}
		\captionof{figure}{\footnotesize Net effect of crossings in the mixed case.}
	\end{minipage}
\end{figure} 
The third type is the combination of crossed paths of the first and the second types, as shown in Figure 10-12. In the positive `quadrant' of Figure 11, we construct a new homotopy partly from the green line and the light blue line respectively and further deform smoothly to the orange line. We perform similar procedure for the bottom `quadrant'. The net effect is exactly as in Figure 12 where the local equivariant spectral flow is $\tr(h|_{\E(t_j)}) - \tr(h|_{\E(t_{j-1})}).$

\begin{proposition} \label{spfwelld}
	For $h \in H,$ the equivariant spectral flow $\spf_h$ is well-defined, i.e. it is independent of the refinement of the grid partition and it only depends on the continuous function $B(t): [0,1] \to \widehat{\cF}_*(\cH').$
\end{proposition}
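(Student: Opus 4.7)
My plan is to follow the strategy of Phillips \cite{phillips96self}, replacing ordinary dimensions by $h$-traces. The key observation is that any two good partitions of a grid admit a common refinement, so it suffices to verify invariance under two elementary refinement moves: (a) subdividing the $t$-axis partition with a new point while keeping the cutoffs $\{a_j\}$ fixed, and (b) replacing a cutoff $a_j$ on a subinterval $[t_{j-1},t_j]$ by another value $a_j'$ such that the horizontal strip $[\min(a_j,a_j'),\max(a_j,a_j')]\times[t_{j-1},t_j]$ contains no spectrum of $B(t)$. Once (a) and (b) are established, equality of the sums computed with two different good partitions follows by a finite sequence of such moves, which exists by compactness of $[0,1]$ together with the good-partition hypothesis.

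For (a), insert $t^*\in(t_{j-1},t_j)$ with the same cutoff $a_j$. The good-partition hypothesis guarantees $a_j\notin\spec B(t^*)$, so $\E_j(t^*)$ is a finite-dimensional $h$-invariant subspace and $\tr(h|_{\E_j(t^*)})$ is well-defined. The contribution $\tr(h|_{\E_j(t_j)})-\tr(h|_{\E_j(t_{j-1})})$ then splits as a telescoping sum, recovering the original value. For (b), the spectral-no-crossing condition implies that the spectral projection of $B(t_i)$ onto $[a_j,\infty)$ equals that onto $[a_j',\infty)$ at the endpoints $t_i=t_{j-1},t_j$, so the eigenspaces $\E_j(t_i)$ computed with either cutoff coincide as $h$-modules, and the traces are therefore equal.

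To complete the well-definedness, given two good partitions $\cP_1,\cP_2$, I would take the common refinement of their $t$-partitions, then on each new subinterval interpolate in $a$ between the two cutoffs, choosing the path to avoid the (discrete, locally finite) spectrum of $B(\cdot)$; this is possible by continuity of $B$ and the Fredholm property, which ensure that the set $\{(t,\lambda)\in[0,1]\times\R : \lambda\in\spec B(t),\ |\lambda|\le R\}$ is a closed nowhere-dense subset. Applying (a) and (b) finitely many times shows the sum is unchanged, proving independence of the partition.

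The main obstacle, beyond the classical argument, is the equivariant bookkeeping: one must verify that each spectral subspace $\E_j(t)$ is genuinely an $h$-invariant finite-dimensional subspace and that $\tr(h|_{-})$ is additive over $h$-invariant orthogonal direct-sum decompositions. The first follows because $h$ commutes with $B(t)$ for every $t$ (built into the definition of $\pi_1(\widehat{\cF}_*(\cH'))_H$), so $h$ commutes with every spectral projection of $B(t)$; the second is a standard linear-algebra fact. With these observations in place, the Phillips telescoping and homotopy arguments carry over essentially verbatim, except that integer ``jump counts'' are replaced by complex numbers $\tr(h|_{\E_j(t_i)})$.
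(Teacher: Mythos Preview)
Your approach mirrors the paper's: both follow Phillips by checking invariance under vertical and horizontal cuts, with the paper handling the horizontal cut via a one-line additivity claim and a figure. Your write-up is in fact more careful about the equivariant bookkeeping.

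There is, however, a gap in your move (b) and the subsequent interpolation. You require the strip $[\min(a_j,a_j'),\max(a_j,a_j')]\times[t_{j-1},t_j]$ to contain \emph{no} spectrum, and then claim that any two good partitions are connected by a finite sequence of moves (a) and (b) by ``interpolating in $a$, choosing the path to avoid the spectrum''. This reduction fails: if an eigenvalue branch $\lambda(t)$ stays strictly inside $(a_j',a_j)$ for every $t\in[t_{j-1},t_j]$ (e.g.\ a constant eigenvalue), then no chain of your moves (b) can cross it, and subdividing in $t$ via (a) does not help either, since the obstructing branch persists on every subinterval. The correct Phillips argument drops the empty-strip hypothesis altogether: if $a_j$ and $a_j'$ are both valid cutoffs on $[t_{j-1},t_j]$ (i.e.\ each avoids $\spec B(t)$ for all $t$ there), then $t\mapsto\chi_{[\min(a_j,a_j'),\max(a_j,a_j'))}(B(t))$ is a norm-continuous family of finite-rank $h$-invariant projections, so its $h$-character $\tr(h|_{\im(\cdot)})$ is locally constant, hence constant on $[t_{j-1},t_j]$. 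The difference of the two contributions is then this constant evaluated at $t_j$ minus at $t_{j-1}$, which is zero. With this strengthened (b), passing to the common $t$-refinement via (a) already suffices, and the interpolation step is unnecessary.
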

\begin{proof}
	Without loss of generality, we choose a good grid partition such that there are only one to two eigenvalues inside the grid over $[t_{j-1},t_j].$ Let $t_d$ be any element in $[t_{j-1},t_j]$ (its corresponding line of spectrum is the vertical orange line, Figure 13). Let $P^+_d(t_d)\Big|^a_0$ be the intersection (green dot, Figure 13)  between the line $t=t_d$ and a path of spectrum. Then,  
	\begin{align*}
		\spf_h(B_t)_{\substack{t_{j-1} \leq t \leq t_j \\ 0 \leq P^+ \leq a}}
		&= \Big(\tr\big(h|_{\E_j(t_j)}\big) - \tr\big(h|_{\E_j(t_d)}\big) \Big)+ \Big(\tr\big(h|_{\E_d(t_d)}\big) - \tr\big(h|_{\E_d(t_{j-1})}\big) \Big) \\
		&= \tr\big(h|_{\E_j(t_j)}\big) - \tr\big(h|_{\E_j(t_{j-1})}\big)
	\end{align*} where the second equality follows from the fact that $P^+$ is canonical, that is, $P^+_j=P^+_d.$  This shows that $\spf_h$ is independent of vertical cut, see Figure 13. On the other hand, let the orange line be any horizontal cut as shown in Figure 14 which splits the grid $[0,a]$ into $[0,a']$ and $[a',a]$. Then, it is evident from definition that 
	$$\spf_h(B_t)_{\substack{t_{j-1} \leq t \leq t_j \\ 0 \leq P^+ \leq a}}
	= \spf_h(B_t)_{\substack{t_{j-1} \leq t \leq t_j \\ 0 \leq P^+ \leq a'}} +
	\spf_h(B_t)_{\substack{t_{j-1} \leq t \leq t_j \\ a' \leq P^+ \leq a}}.$$
	Thus, it is independent of horizontal cut. The general case follows immediately by summing over all grids.
	\begin{figure}[h]
		\centering
		\begin{minipage}{.4\textwidth}
			\centering
			\includegraphics[width=.9\linewidth]{./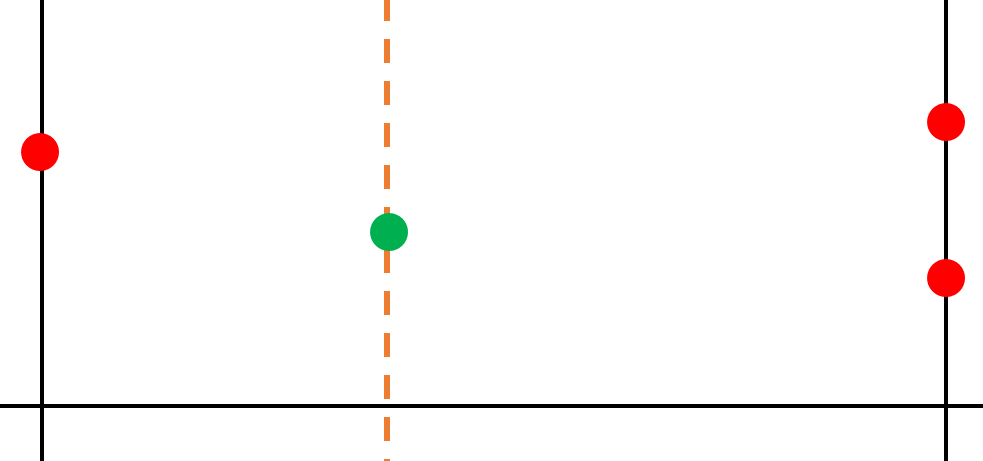}
			\captionof{figure}{Vertical cut.}
		\end{minipage}
		\begin{minipage}{.4\textwidth}
			\centering
			\includegraphics[width=.9\linewidth]{./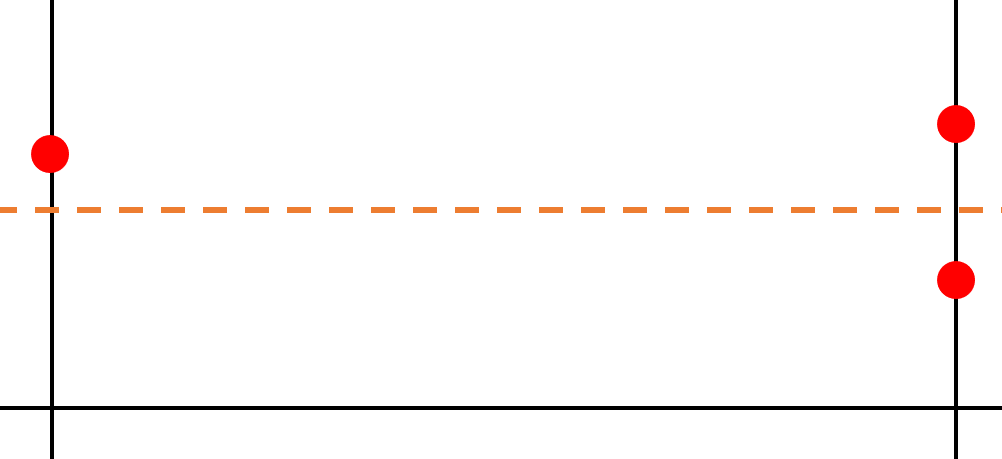}
			\captionof{figure}{Horizontal cut.}
		\end{minipage}
	\end{figure}
\end{proof}


Let $\spf_H : \pi_1(\widehat{\cF}_*(\cH'))_H \longrightarrow R(H)$ be the equivariant spectral flow such that the evaluation at every $h \in H$ is given by \eqref{eq:hspec1}. Here, $R(H)$ denotes the representation ring of $H,$ regarded as the ring of characters of $H.$

\begin{theorem} \label{specisom}
	The spectral flow $\spf_H : \pi_1(\widehat{\cF}_*(\cH'))_H \longrightarrow R(H)$ is an isomorphism.
\end{theorem}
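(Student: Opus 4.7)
The plan is to combine the homotopy equivalences $\widehat{\cF}_*(\cH') \simeq \widehat{\cJ}_*(\cH') \simeq \widehat{J}_*(\cH')$ established above with the explicit description of representatives of $\pi_1(\widehat{\cF}_*(\cH'))_H$ by paths of the form $B_k(t)=P^+-P^-+2tP_k$, and to show that under $\spf_H$ these paths are in bijection with classes of finite-dimensional $H$-modules $[V_k]\in R(H)$.

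First I would verify that $\spf_H$ lands in $R(H)$ and is a group homomorphism. For each fixed $h\in H$ the value $\spf_h(B(t))$ is well-defined by Proposition \ref{spfwelld} and homotopy invariant by Proposition \ref{homotopyinv}. At every grid vertex, the eigenspace $\E_j(t_i)$ is a finite-dimensional $H$-invariant subspace (since $h$ commutes with the whole family and the spectrum of $A$ is discrete), so $h\mapsto \tr(h|_{\E_j(t_i)})$ is the character of a finite-dimensional $H$-representation. Formula \eqref{eq:hspec1} then expresses $\spf_H(B(t))$ as a finite $\Z$-linear combination of such characters, hence an element of $R(H)$. Additivity under concatenation of equivariant loops, combined with linearity of the trace, yields the homomorphism property.

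For surjectivity, let $V$ be any finite-dimensional irreducible $H$-representation. The Peter-Weyl decomposition \eqref{eq:Hprime} guarantees that $V$ embeds $H$-equivariantly into $\cL_{P^-}(\cH')$ with infinite multiplicity; let $P_V$ denote the orthogonal projection onto one such copy. The path
\[
B_V(t) \;=\; P^+-P^-+2tP_V, \qquad t\in[0,1],
\]
has $B_V(0)-B_V(1)=-2P_V$ of finite rank, so $B_V$ descends to a loop in $\widehat{J}_*(\cH')$ and, via the homotopy equivalence, to an element of $\pi_1(\widehat{\cF}_*(\cH'))_H$. Its spectrum on $V$ moves linearly from $-1$ to $+1$ and crosses zero exactly once, so applying \eqref{eq:hspec1} with a one-box grid yields $\spf_h(B_V)=\tr(h|_V)$, i.e.\ $\spf_H(B_V)=[V]$. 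Concatenating such loops, together with the analogous loops built from copies of $V$ in $\cL_{P^+}(\cH')$ (contributing $-[V]$), realises every virtual character in $R(H)$.

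Injectivity follows from running the same computation in reverse: by the discussion preceding Definition \ref{equivspecflowdef}, any class in $\pi_1(\widehat{\cF}_*(\cH'))_H$ admits a representative of the standard form $B_k(t)=P^+-P^-+2tP_k$ for some finite-dimensional $H$-invariant subspace $V_k$, and the same one-box-grid computation gives $\spf_H(B_k)=[V_k]$. If this vanishes in $R(H)$ then $V_k=0$, forcing $B_k$ to be null-homotopic. The main obstacle is justifying the reduction of an arbitrary equivariant loop to a standard $B_k$ in an $H$-equivariant way: this relies on an equivariant Kuiper-type contractibility of the unitary group acting on $\cL_{P^\pm}(\cH')$ combined with the Peter-Weyl decomposition, ensuring that any two equivariant polarizations differing by finite rank are joined by paths whose net effect is precisely a shift by a finite-dimensional $H$-submodule, so the induced character pairing is faithful.
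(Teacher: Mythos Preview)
Your approach differs from the paper's. The paper factors $\spf_H$ through equivariant $K$-theory via a commutative triangle
\[
\begin{tikzcd}[column sep=small]
\pi_1(\widehat{\cF}_*(\cH'))_H \arrow[rr,"\ind_a"]\arrow[dr,"\spf_H"] &  & K^1_H(S^1)\arrow[dl,"ch_H",swap] \\
 & R(H) &
\end{tikzcd}
\]
where $\ind_a(B_k(t))=[\cW_t]$ with $\cW_t=-\exp(i\pi B_k(t))$. Both $\ind_a$ and $ch_H$ are known isomorphisms (the first from the Atiyah--Singer homotopy equivalence $\widehat{\cF}_*\simeq U(\infty)$, the second from equivariant Bott periodicity $K^1_H(S^1)\cong \pi_1(U(\infty))_H\cong R(H)$), and the commutativity $\spf_h=ch_H\circ\ind_a$ is checked by a one-line winding-number integral on the generators $B_k$. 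This sidesteps entirely the question of reducing an arbitrary loop to a standard one: injectivity is inherited from $ch_H$ and $\ind_a$.

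Your surjectivity is fine and essentially the same computation the paper does on generators. The injectivity argument, however, has the gap you yourself flag: reducing an arbitrary equivariant loop to a single standard $B_k$ with $V_k$ an \emph{honest} (not virtual) $H$-module. Your concatenation construction in the surjectivity step produces loops whose spectral flow is a \emph{difference} $[V]-[W]$, so a general class should reduce to something like $B_{V,W}(t)=P^+-P^-+2tP_V-2tP_W$ with $P_V\subset\cL_{P^-}$ and $P_W\subset\cL_{P^+}$; then $\spf_H=0$ only gives $[V]=[W]$ in $R(H)$, not $V=0$. To conclude null-homotopy you would still need to exhibit an equivariant homotopy between $B_{V,W}$ and the constant loop when $V\cong W$, which is exactly the equivariant-Kuiper/polarization statement you gesture at but do not prove. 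The paper's route through $K^1_H(S^1)$ packages all of this into the cited isomorphisms and is therefore both shorter and gap-free.
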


\begin{proof}
	Inspired by \cite{7}, this is equivalent to show that the following diagram
	$$
	\begin{tikzcd}[column sep=small]
		\centering
		\pi_1(\widehat{\cF}_*(\cH'))_H \arrow[rr,"\ind_a"]\arrow[dr,"\spf_H"] &  & K^1_H(S^1)\arrow[dl,"ch_H",swap] \\ 
		& R(H) &
	\end{tikzcd}
	$$ commutes. Here, the equivariant $K^1$-theory of $S^1$ is isomorphic to the homotopy classes of equivariant maps $[S^1,U(\infty)]_H,$ 
    which is $\pi_1(U(\infty))_H$ by Bott's Periodicity.  Hence, $ch_H : K^1_H(S^1) \to \pi_1(U(\infty))_H \cong R(H)$ is an isomorphism. On the other hand, the top horizontal map is given by 
	\begin{align*}
		\ind_a: \pi_1(\widehat{\cF}_*(\cH'))_H &\longrightarrow K^1_H(S^1) \\
		B_k(t) &\longmapsto [\cW_t]
	\end{align*} where $\cW_t= -\exp(i \pi B_k(t))$ is a unitary map which equals $\exp(2 \pi i t)$ on $V_k$ and $\id$ on $V_k^\perp.$ Here, $V_k$ is the image of the infinite `negative half' projection  $P_{-}(\cH')$ with $P_++P_-=\id.$ In particular, the exponential map $B_k(t) \mapsto \exp(i\pi B_k(t)+\id)$ can be shown to be a homotopy equivalence between $\widehat{\cF}_\ast$ and $ U(\infty),$ see \cite{22,5,7}. Thus, their first equivariant fundamental groups are isomorphic given by the induced map $\ind_a.$ Moreover, by a direct computation
	\begin{align} \label{eq:chernind}
		ch_H(\ind_a(B_k(t)))
		&=\frac{1}{2\pi i} \int_0^1\tr \Big(h\cW^{-1}(t)\frac{d\cW(t)}{dt} \Big) dt  \\
		&=\frac{1}{2\pi i} \int_0^1 \tr(h|_{V_k}) e^{-2\pi i t}(2\pi i)e^{2\pi i t}dt \nonumber \\
		&= \tr(h|_{V_k}) \int_0^1 dt= \spf_h(B_k(t)) \nonumber
	\end{align} for all $h \in H.$ In particular, one deduces that the family ${B_1(t)}$ is mapped to a generator $\tr(h|_{V_1})$ in $R(H)$ via $\spf_h.$ Equation \eqref{eq:chernind} implies that $\spf_h=ch_H \circ \ind_a.$ Since both $ch_H$ and $\ind_a$ are isomorphisms, so is $\spf_H.$ 
\end{proof}

\begin{proposition}
	For $h\in H,$  $\spf_h$ satisfies the following properties.
	\begin{enumerate}
		\item For $t \in [0,1],$ let $B_k(t)$ and $B'_k(t)$ be in  $\pi_1(\widehat{\cF}_*(\cH'))_H$ such that $B_k(1)=B'_k(0).$ Denote the concatenation by $B_k(t)* B'_k(t).$ Then, $$\spf_h(B_k(t)* B'_k(t))=\spf_h(B_k(t)) + \spf_h(B'_k(t)).$$  
		\item The map $\spf_h$ is natural under any $h$-equivariant isomorphism on $\widehat{\cF}_*(\cH').$  
		\item If $h$ acts trivially, then the $h$-equivariant spectral flow coincides with the classical spectral flow of Phillips, i.e. $\spf_h=\spf.$  Furthermore, if $h$ acts trivially and $D(t)$ is a path of self-adjoint invertible Fredholm operators, then $\spf_h(D(t))=0.$
	\end{enumerate}
\end{proposition}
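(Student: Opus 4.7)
The plan is to prove the three assertions separately, each by a short argument that leverages the grid-partition description of $\spf_h$ in Definition~\ref{equivspecflowdef} together with the well-definedness of Proposition~\ref{spfwelld}. I expect (1) and (3) to be essentially formal; the subtle point will lie in pinning down what ``natural'' means in (2).

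For (1), I would exploit the freedom to choose the time partition. After reparametrizing the concatenation on $[0,1]$ so that $t=1/2$ corresponds to the gluing point $B_k(1)=B'_k(0)$, I will pick a good grid partition that contains $t=1/2$ as one of the vertices $t_j$. Since Proposition~\ref{spfwelld} guarantees independence under such refinements, the telescoping sum in \eqref{eq:hspec1} decomposes into contributions coming from the subintervals inside $[0,1/2]$ and those inside $[1/2,1]$, which are by construction $\spf_h(B_k(t))$ and $\spf_h(B'_k(t))$ respectively.

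For (2), let $\Phi:\widehat{\cF}_*(\cH')\to\widehat{\cF}_*(\cH')$ be the given $h$-equivariant isomorphism. The key observation is that $\Phi$ preserves spectral data and intertwines the $h$-action, so for each $t$ and each grid level there is a canonical unitary identification between the positive spectral eigenspaces $\E_j(t_i)$ of $B_t$ and those of $\Phi(B_t)$ that commutes with $h$. Each term $\tr(h|_{\E_j(t_i)})$ appearing in \eqref{eq:hspec1} is therefore preserved, and summing over the grid yields $\spf_h(\Phi\circ B_t)=\spf_h(B_t)$. For (3), when $h$ acts trivially one has $\tr(h|_{\E_j(t_i)})=\dim \E_j(t_i)$, which reduces \eqref{eq:hspec1} to Phillips' grid count of eigenvalues with multiplicities; thus $\spf_h=\spf$. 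For vanishing on invertible paths, invertibility at every $t$ together with compactness of $[0,1]$ yields a uniform spectral gap $\delta:=\inf_{t\in[0,1]}\mathrm{dist}(0,\spec(D(t)))>0$; choosing the spectral window of the grid within $[0,a]$ for some $0<a<\delta$ guarantees that no eigenvalue enters the window, so every $\E_j(t_i)$ is trivial and $\spf(D(t))=0$.

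The main obstacle I anticipate is making the naturality statement in (2) precise: the phrase ``$h$-equivariant isomorphism on $\widehat{\cF}_*(\cH')$'' admits several plausible interpretations (for instance conjugation by an equivariant unitary, or an abstract equivariant homeomorphism of the space of operators), and one must verify for the intended class of maps that both the spectrum and the $h$-action on positive spectral eigenspaces are preserved, so that the trace data used in \eqref{eq:hspec1} match term by term.
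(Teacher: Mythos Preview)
The paper states this proposition without proof (it appears immediately before Section~\ref{sec4} with no intervening \texttt{proof} environment), treating the three items as routine consequences of Definition~\ref{equivspecflowdef} and Proposition~\ref{spfwelld}. Your proposal therefore supplies details the authors chose to omit, and your arguments for (1) and (3) are correct and exactly the expected ones: inserting $t=1/2$ as a partition node and invoking well-definedness gives the telescoping additivity, and $\tr(\id|_{\E_j(t_i)})=\dim\E_j(t_i)$ recovers Phillips' count, with the uniform spectral gap on a compact parameter interval forcing all windows to be empty in the invertible case.

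Your caution about (2) is well placed. The paper never spells out what ``$h$-equivariant isomorphism on $\widehat{\cF}_*(\cH')$'' means, but the parallel clause for the Maslov index (``natural under any $h$-equivariant isomorphism between $\cH'$'') strongly suggests the intended reading is conjugation by a unitary $U\in\cU(\cH')$ commuting with $h$. Under that interpretation your argument goes through cleanly: $U B_t U^{-1}$ has the same spectrum as $B_t$, the spectral projections satisfy $P^+_j(U B_t U^{-1})=U P^+_j(B_t) U^{-1}$, and since $hU=Uh$ the traces $\tr(h|_{\E_j(t_i)})$ are unchanged. It would be worth stating this interpretation explicitly, since for a bare homeomorphism of the space $\widehat{\cF}_*(\cH')$ there is no reason the spectral data should be preserved.
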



\section{Equivariant Maslov index} \label{sec4}
In this section, we discuss an equivariant analog of Maslov indices of Lagrangian paths  \textit{\`{a} la} Kirk-Lesch \cite{15}. Let $\cH'$ be the model $H$-module Hilbert space as above and let  $\gamma: \cH' \to \cH' $ be a unitary map satisfying \eqref{eq:gamma1}. If we consider a symplectic form on $\cH'$ defined by $\omega(x,y)= \langle x, \gamma y \rangle,$ then the triple $(\cH',\langle,\rangle,\gamma)$ forms a Hermitian symplectic Hilbert space.

\begin{definition}
	Given a Hermitian symplectic Hilbert space $(\cH',\langle,\rangle,\gamma),$ define the set of Lagrangian subspaces of $\cH'$ by 
	\[
	\fL(\cH')= \{\cL \subset \cH' \;|\; \gamma(\cL)= \cL^\perp \cap \cH'\}.
	\]
\end{definition}

\begin{remark}
	The image of any orthogonal projection $P$ on $\cH'$ satisfying the condition $\gamma P\gamma^*=\id -P$ is a Lagrangian subspace. Hence, $\cL(\cH')$ is non-empty. Moreover, $\Gr_h(\cH')$ can be identified  with  $\cL(\cH')$ by  
	\begin{equation} \label{eq:laggrassisom1}
		P \mapsto \im(P), \quad\quad  P_\cL \mapsfrom \cL.
	\end{equation} The $h$-action on each $\cL$ is induced by the restriction of $\cH'.$ This means that each Lagrangian $\cL$ is naturally an $h$-subspace. 
\end{remark}

\begin{definition} \label{laggrass1}
	Let  $\fL^2(\cH')=\{(\cL_1,\cL_2)\;|\; \cL_1,\cL_2 \in \fL(\cH')\}$ be the set of pairs of Lagrangian subspaces of $\cH'.$ Then, the subset of Fredholm pairs and invertible pairs are respectively  given by
	\begin{align} 
		\fL^2_F(\cH')&=\{(\cL_1,\cL_2) \in \fL^2(\cH') \;|\; (\cL_1,\cL_2) \text{ is a Fredholm pair} \} \label{eq:fredlagpair1} \\
		\fL^2_*(\cH')&=\{(\cL_1,\cL_2) \in \fL^2(\cH') \;|\; (\cL_1,\cL_2) \text{ is an invertible pair} \label{eq:invlagpair1} \}.
	\end{align}
	
	Let $\cL_M$ be the canonical Lagrangian subspace associated to the canonical Calderon projection $\cP_M$ of $M.$ Then, write $\fL^0_F(\cH')$ (resp. $\fL^0_*(\cH')$ ) to be the Lagrangian subset with respect to $\cL_M,$ i.e. all Lagrangian pairs are taking the form $(\cL,\cL_M)$ such that $(\cL,\cL_M)$ is a Fredholm (resp. invertible) pair. Then, a Maslov cycle $\cM_{\cL_M}$ at $\cL_M$ is defined by 
	\begin{equation} \label{eq:maslovcycle1}
		\cM_{\cL_M}= \fL^0_F(\cH') \backslash \fL^0_*(\cH').
	\end{equation} 
\end{definition}

\begin{remark}
	Recall that $(\cL_1,\cL_2)$ is a Fredholm pair for $\cL_i \in \fL(\cH')$ if $\cL_1 \cap \cL_2$ is a finite dimensional $h$-subspace of $\cH'$ and $\cL_1 + \cL_2$ is a closed $h$-space with finite codimension. In particular, the Lagrangian pair $(\cL_1,\cL_2)$ is invertible if $\cL_1 \cap \cL_2= \{0\}$  and $\cL_1 + \cL_2 = \cH'.$ Hence, $\cM_{\cL_M}$ is non-trivial as it does not include all Lagrangian subspaces that intersect the Cauchy-data space $\cL_M$ transversally.
\end{remark}  

\begin{definition} 
	\label{laggrass2} 
	For $h \in H.$ Denote by
	\begin{equation} \label{eq:grassm2}
		\Gr_h(\cH') = \{P \in \cB(\cH')| P^2=P=P^*,\gamma P\gamma^*=\id-P\}
	\end{equation} the Lagrangian grassmannian of equivariant orthogonal projections on $\cH'.$ 
	Then, we define 
	\begin{align}
		\Gr^2_F(\cH'):=\Gr^2_{F,h}(\cH')&=\{(P,Q) \;|\; P,Q \in \Gr_h(\cH'), (P,Q) \text{ is a Fredholm pair} \} \label{eq:fredlagpair2} \\
		\Gr^2_*(\cH'):=\Gr^2_{*,h}(\cH')&=\{(P,Q) \;|\; P,Q \in \Gr_h(\cH'), (P,Q)  \text{ is an invertible pair} \label{eq:invlagpair2} \}.
	\end{align} 
	
	From \eqref{eq:laggrassisom1}, an induced associated map between $\Gr^2_F(\cH')$ and $\fL^2_F(\cH')$ is given by 
	\begin{equation} \label{eq:laggrassisom2}
		(P,Q) \mapsto (\ker P, \im Q),\quad\quad (\id-P_{\cL_1},P_{\cL_2}) \mapsfrom (\cL_1,\cL_2). 
	\end{equation}
\end{definition}

An immediate adaptation of Lemma~\ref{PQlemma1} to $\Gr_h(\cH')$ is as follows.

\begin{lemma}\label{PQlemma2}
	Let $h=\diag(a,WaW^*) \in H.$ Let $P,Q \in \Gr_h(\cH').$  Then,  $(P,Q)$ is an equivariant Fredholm (resp.  invertible)  pair if and only if $-1 \notin \spec_{ess}((a-\id)+aT^*S)$ (resp. $-1 \notin \spec((a-\id)+aT^*S)).$ There is an isomorphism  between the $h$-space
	$\ker(P) \cap \im(Q)$ and the space $\ker(a(\id + T^*S)).$  
\end{lemma}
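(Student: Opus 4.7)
The plan is to transcribe the argument of Lemma \ref{PQlemma1} to the abstract $H$-module setting, observing that its proof was entirely algebraic: it relied only on (i) the matrix parametrization \eqref{eq:projform1} of elements of $\Gr_h$ coming from the $\gamma$-grading $\cH' = \cE_i \oplus \cE_{-i}$, (ii) the diagonal form $h = \diag(a, WaW^*)$ given by Lemma \ref{hchar1}, and (iii) the commutation relation $[T^*W, a] = 0$ forced by $h$-equivariance of the projection. Each of these ingredients is formulated purely in terms of the Hermitian symplectic data $(\cH', \langle, \rangle, \gamma)$ and the induced $H$-action, so they carry over without modification from $L^2(E|_{\partial M})$ to the model space $\cH'$.

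First I would fix $P,Q \in \Gr_h(\cH')$ with associated odd unitary operators $T, S : \cE_i \to \cE_{-i}$ via \eqref{eq:projform1}, and write $h = \diag(a, WaW^*)$ as in Lemma \ref{hchar1}. Parametrizing $\im(Q) = \{(x, Sx)^T \mid x \in \cE_i\}$ and composing, a direct computation identical to the one in \eqref{eq:hPQ1} yields
\[
hPQ = \frac{1}{2}\begin{pmatrix} a(\id + T^*S) \\ Ta(\id + T^*S) \end{pmatrix},
\]
where one uses that both $T^*W$ and $W^*T$ commute with $a$. Since $T$ is unitary, the Fredholm / invertibility behavior of $hPQ$ as a map $\im(Q) \to \im(P)$ is controlled entirely by the scalar-like operator $a(\id + T^*S) = \id + \bigl[(a - \id) + aT^*S\bigr]$ acting on $\cE_i$.

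From this normal form the three claims are read off directly. The operator $\id + \bigl[(a - \id) + aT^*S\bigr]$ is Fredholm iff $-1 \notin \spec_{ess}\bigl((a - \id) + aT^*S\bigr)$, and invertible iff $-1 \notin \spec\bigl((a - \id) + aT^*S\bigr)$; since $h$ is a unitary that commutes with $P$ and $Q$, these conditions equivalently characterize the Fredholm and invertible pair properties of $(P,Q)$. For the kernel identification, an element $(x, Sx)^T \in \im(Q)$ lies in $\ker(P)$ precisely when $a(\id + T^*S)x = 0$, giving the claimed $h$-equivariant isomorphism $\ker(P) \cap \im(Q) \cong \ker\bigl(a(\id + T^*S)\bigr)$ with the $H$-action on the right inherited from the $h$-equivariance of $T, S, a$.

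There is essentially no new obstacle: the only subtlety is confirming that the commutation $[T^*W, a] = 0$ used in the matrix computation persists in the abstract setting, but this is automatic from $hP = Ph$ through exactly the same argument that produced Lemma \ref{hchar1}. For this reason one may legitimately label the proof as an "immediate adaptation" and omit the detailed recomputation, merely pointing to the parallel steps in the proof of Lemma \ref{PQlemma1}.
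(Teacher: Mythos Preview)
Your proposal is correct and matches the paper's approach exactly: the paper does not even supply a separate proof for this lemma, introducing it only with the sentence ``An immediate adaptation of Lemma~\ref{PQlemma1} to $\Gr_h(\cH')$ is as follows.'' Your write-up simply unpacks that adaptation, reproducing the computation \eqref{eq:hPQ1} in the abstract setting and observing that the purely algebraic ingredients (the $\gamma$-grading, the diagonal form of $h$, and the commutation $[T^*W,a]=0$) transfer verbatim.
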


\begin{definition} \label{equivmaslov}
	For $h \in H,$ let $\cL(t)=(\cL_1(t),\cL_2(t)): [0,1] \to \fL^2_F(\cH')$ be a continuous path of $h$-equivariant Fredholm Lagrangian pairs. Choose a good grid partition. Then, the $h$-equivariant Maslov index 
	$$
	\mas_h: \pi_1(\fL^2_F(\cH'))_H \to \C 
	$$
	of the path $\cL(t)$ is defined by
	\begin{equation}
		\mas_h(\cL_1(t),\cL_2(t)):= \sum^n_{j=1} \Big[ \tr \big( h|_{\cL_1(t_j) \cap \cL_2(t_j)}\big) 
		- \tr \big( h|_{\cL_1(t_{j-1}) \cap \cL_2(t_{j-1})}\big) \Big]. 
	\end{equation}
	
\end{definition}


\begin{lemma}
	For $h \in H,$ the equivariant Maslov index $\mas_h$ is well-defined.
\end{lemma}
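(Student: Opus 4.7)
The strategy is to mimic the proof of Proposition~\ref{spfwelld} by transporting the well-definedness of the equivariant spectral flow through the equivariant identification~\eqref{eq:laggrassisom2} between Lagrangian pairs and projection pairs. First, I would pass from the path $(\cL_1(t),\cL_2(t)) \in \fL^2_F(\cH')$ to the corresponding path of projection pairs $(P(t),Q(t)) \in \Gr^2_F(\cH')$ given by \eqref{eq:laggrassisom2}. Writing $P(t)$ and $Q(t)$ in the block form~\eqref{eq:projform1} in terms of odd unitaries $T(t)$ and $S(t)$, Lemma~\ref{PQlemma2} furnishes the $h$-equivariant isomorphism
\[
\cL_1(t) \cap \cL_2(t) \;\cong\; \ker\!\bigl(a(\id + T(t)^*S(t))\bigr),
\]
so the trace contributions in Definition~\ref{equivmaslov} become traces of $h$ on finite-dimensional $h$-invariant kernels of the one-parameter operator family $F(t) := a(\id + T(t)^*S(t))$. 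The Fredholm hypothesis on $(\cL_1(t),\cL_2(t))$ gives a uniform spectral gap of $F(t)$ around $0$ outside a compact part of the spectrum, so an $\epsilon$-perturbation of any candidate partition produces a good partition in which $F(t_j)$ is nonsingular at the chosen reference level at each node.

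Second, I would prove independence under refinement of a good partition by the same telescoping argument as in Proposition~\ref{spfwelld}. If an extra node $t_d \in (t_{j-1},t_j)$ is inserted, the single bracket
\[
\tr\bigl(h|_{\cL_1(t_j)\cap\cL_2(t_j)}\bigr)-\tr\bigl(h|_{\cL_1(t_{j-1})\cap\cL_2(t_{j-1})}\bigr)
\]
splits as the sum of two brackets over $[t_{j-1},t_d]$ and $[t_d,t_j]$, whose inserted contributions $\pm \tr\bigl(h|_{\cL_1(t_d)\cap\cL_2(t_d)}\bigr)$ cancel. Since any two good partitions admit a common refinement (their union, with a small perturbation if necessary to keep the combined partition ``good''), both agree with the common refinement and hence with each other.

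Third, that $\mas_h$ depends only on the continuous path $(\cL_1(t),\cL_2(t))$ and not on any auxiliary data follows by the same compactness argument used in Proposition~\ref{homotopyinv}: a continuous homotopy of paths lies in finitely many neighbourhoods of $\fL^2_F(\cH')$ on each of which the family $F(s,t)$ has a uniformly bounded number of spectral crossings near $0$, and within each small grid the combinatorial sum is preserved by the $h$-equivariant homotopy of the kernels.

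\textbf{Main obstacle.} The delicate point is establishing the existence of a good grid partition in the infinite-dimensional Lagrangian setting, i.e.\ a uniform spectral gap for $F(t_j)$ at each node, which is needed for the kernel spaces entering the trace to be finite-dimensional and for the trace to vary continuously in the complement of finitely many crossing times. Once this is in place, the combinatorial telescoping and the transfer from Proposition~\ref{spfwelld} via Lemma~\ref{PQlemma2} are mechanical.
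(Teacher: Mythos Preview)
Your proposal is correct and its core is exactly the paper's argument: the telescoping cancellation when an extra node $t_d$ is inserted, borrowed verbatim from Proposition~\ref{spfwelld}. The paper's proof, however, is considerably more direct---it works with the intersections $\cL_1(t)\cap\cL_2(t)$ themselves and never passes to the operator family $F(t)=a(\id+T(t)^*S(t))$ via Lemma~\ref{PQlemma2}; that translation is unnecessary here since the telescoping already works at the level of traces on the finite-dimensional $h$-spaces $\cL_1(t)\cap\cL_2(t)$. Your first step and ``main obstacle'' discussion (existence of a good partition via a spectral gap for $F(t)$) and your third step (path dependence via a homotopy argument) both go beyond what the paper proves here: the paper simply invokes Assumption~2 for the existence of a good partition, and defers homotopy invariance to Proposition~4.7(2), stated separately without proof. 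So your route buys a more self-contained treatment at the cost of extra machinery; the paper's route is a two-line reduction to Proposition~\ref{spfwelld}.
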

\begin{proof}
	Suppose a good grid partition is chosen. Consider the grid over $[t_{j-1},t_j].$ Without loss of generality, we may assume that the paths $\cL_1(t)$ and $\cL_2(t)$ do not intersect transversally at the four vertices of the grid. Then, for any $t_d \in [t_{j-1},t_j],$ by a similar argument as in the proof of Proposition~\ref{spfwelld}, we have 
	\begin{align*}
		\mas_h(\cL_1(t),\cL_2(t))_{\substack{t_{j-1} \leq t \leq t_j}} 
		&= \Big(  \tr \big( h|_{\cL_1(t_j) \cap \cL_2(t_j)}\big) 
		- \tr \big( h|_{\cL_1(t_{d}) \cap \cL_2(t_{d})}\big) \Big) \\
		&+ \Big( \tr \big( h|_{\cL_1(t_d) \cap \cL_2(t_d)}\big) 
		- \tr \big( h|_{\cL_1(t_{j-1}) \cap \cL_2(t_{j-1})}\big)\Big) \\
		&= \tr \big( h|_{\cL_1(t_j) \cap \cL_2(t_j)}\big) 
		- \tr \big( h|_{\cL_1(t_{j-1}) \cap \cL_2(t_{j-1})}\big).
	\end{align*}
	Here, we include the possibility of paths whose ends do not meet the initial of the path in the next grid. In the case where a grid partition is fixed and the paths intersect transversally at some $t_d \in [t_{j-1},t_j],$ then we compute  $\mas_h$ at $t_d \pm\epsilon$ for some $\epsilon >0$ such that $\cL_1(t_d \pm\epsilon) \cap \cL_2(t_d \pm\epsilon) \neq 0.$ 
\end{proof}

We state the following properties of $\mas_h$ without proof. 
\begin{proposition}
	For $h \in H,$  $\mas_h$ satisfies the following properties.
	\begin{enumerate}
		\item (Reversal) Let $(\widetilde{\cL_1}(t),\widetilde{\cL_2}(t))$ be the Lagrangian paths $(\cL_1(t), \cL_2(t))$ traversing in the opposite direction. Then, 
		\[
		\mas_h(\widetilde{\cL_1}(t),\widetilde{\cL_2}(t))=-\mas_h(\cL_1(t),\cL_2(t)).
		\]
		\item (Homotopy invariance) Let $(\cL_1(t,s),\cL_2(t,s)): [0,1] \times [0,1] \to \fL^2_F(\cH')$ be a continuous path with $\cL_1(t,0)=\cL_1(t), \cL_1(t,1)=\cL'_1(t)$ and   $\cL_2(t,0)=\cL_2(t), \cL_2(t,1)=\cL'_2(t).$ Then,
		\[
		\mas_h(\cL_1(t),\cL_2(t))=\mas_h(\cL'_1(t),\cL'_2(t)).
		\]
		\item (Path additivity) Let $(\cL_1(t),\cL_2(t))$ and $(\cL'_1(t),\cL'_2(t))$ be two Lagrangian paths in $\pi_1(\fL^2_F(\cH'))_H$ such that $\cL_i(1)=\cL'_i(0)$ for $i=1,2.$ Then, $\mas_h$ is additive with respect to the concatenation of paths, i.e.
		\begin{align*}
			\mas_h((\cL_1(t)&*\cL'_1(t),\cL_2(t)*\cL'_2(t)))  \\ &= \mas_h((\cL_1(t),\cL_2(t))+ \mas_h(\cL'_1(t),\cL'_2(t))). \nonumber
		\end{align*}
		\item The map $\mas_h$ is natural under any $h$-equivariant isomorphism between $\cH'.$
		\item If $h$ acts trivially, then the $h$-equivariant Maslov index coincides with the classical Maslov index, i.e. $\mas_h=\mas.$ If, furthermore, $\cL_1(t)\cap \cL_2(t)$ is of constant dimension for all $t,$ then $\mas_h(\cL_1(t),\cL_2(t))=0.$
	\end{enumerate}
\end{proposition}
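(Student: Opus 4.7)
The plan is to mirror the proof strategy for the equivariant spectral flow in Proposition~\ref{homotopyinv}, substituting $\tr(h|_{\,\cdot\,})$ for $\dim(\cdot)$ throughout and exploiting that, by Assumption~\ref{assump1}, every intersection $\cL_1(t)\cap\cL_2(t)$ is a finite-dimensional $h$-invariant subspace on which the trace of $h$ is well defined. In each case the key input is that the Grassmannian constructions (reversal, concatenation, intersections, and homotopies) commute with the $H$-action.

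Properties (1), (3) and (4) are essentially bookkeeping from the definition. For (1), reversing the path and taking the reversed partition $0 = 1-t_n < \cdots < 1-t_0 = 1$ re-indexes the defining sum so that every summand picks up a minus sign. For (3), a good grid partition for the concatenated path can be obtained by concatenating good grid partitions for the two subpaths, and the defining sum then splits as the sum of the two partial contributions. For (4), an $h$-equivariant isomorphism $\Psi$ of $\cH'$ carries $\cL_1(t)\cap\cL_2(t)$ isomorphically and $h$-equivariantly onto $\Psi(\cL_1(t))\cap\Psi(\cL_2(t))$; the conjugation invariance of the trace then gives term-by-term equality of the sums.

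For (2), the plan is to imitate the proof of Proposition~\ref{homotopyinv}. Given a homotopy $F = (\cL_1(s,t),\cL_2(s,t)): [0,1]\times[0,1] \to \fL^2_F(\cH')$ with fixed endpoints, compactness of $[0,1]^2$ produces a finite open cover by rectangles on each of which $F$ stays in a sufficiently small neighborhood that the local contribution to $\mas_h$ is stable. A fine enough common refinement of the two grid partitions subordinate to this cover reduces the statement to the claim that over each small rectangle the top-edge and bottom-edge contributions agree, while the vertical-edge contributions cancel by the fixed-endpoint hypothesis. The main technical point, which is anticipated to be the principal obstacle, is to choose the refinement so that the intersection $\cL_1(t)\cap\cL_2(t)$ is non-singular at every grid vertex simultaneously; this can always be arranged by an arbitrarily small perturbation, in parallel with the assumption on good grid partitions.

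Finally, (5) splits into two independent verifications. When $h$ acts trivially, $\tr(e|_V) = \dim V$, and the definition of $\mas_h$ reduces literally to the classical Maslov index in the sense of Kirk--Lesch \cite{kirk2000eta}. For the vanishing under the constant-intersection-dimension hypothesis, the plan is to invoke the homotopy invariance from (2): if $\dim(\cL_1(t)\cap\cL_2(t))$ is constant, the family of intersections forms a trivializable continuous $h$-equivariant vector bundle over $[0,1]$, so there exists an $h$-equivariant homotopy contracting $(\cL_1(t),\cL_2(t))$ to a constant pair within $\fL^2_F(\cH')$. Since the Maslov index of a constant pair is zero by direct inspection of the definition, (2) then yields the conclusion.
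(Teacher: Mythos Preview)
The paper states this proposition \emph{without proof} (immediately before it the authors write ``We state the following properties of $\mas_h$ without proof''), so there is no argument to compare against. Your approach to (1)--(4) is sound and is exactly the kind of routine verification one would expect: reversing the partition flips signs, concatenating partitions gives additivity, the trace is conjugation-invariant, and the Phillips-style subdivision argument carries over from Proposition~\ref{homotopyinv} essentially verbatim once intersections replace eigenspaces.

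There is, however, a genuine gap in your argument for the second clause of (5). You claim that constant intersection dimension yields an $h$-equivariant homotopy of $(\cL_1(t),\cL_2(t))$ to a \emph{constant} pair inside $\fL^2_F(\cH')$, and then invoke (2). Two problems: first, trivializability of the finite-rank intersection bundle over $[0,1]$ says nothing about whether the ambient pair of infinite-dimensional Lagrangians can be contracted---the Lagrangians themselves are moving, and $\fL^2_F(\cH')$ is not contractible; second, even if such a homotopy existed it would not fix the endpoints, whereas your own proof of (2) explicitly uses the fixed-endpoint hypothesis to cancel the vertical-edge contributions. So the appeal to (2) is circular in the form stated.

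The fix is immediate and avoids homotopy altogether: under the hypothesis that $h$ acts trivially, $\tr(h|_{\cL_1(t)\cap\cL_2(t)})=\dim(\cL_1(t)\cap\cL_2(t))$, and if this dimension is constant then every summand $\tr(h|_{\cL_1(t_j)\cap\cL_2(t_j)})-\tr(h|_{\cL_1(t_{j-1})\cap\cL_2(t_{j-1})})$ in Definition~\ref{equivmaslov} vanishes identically, for any good grid partition. No contraction is needed.
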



\section{Equivariant winding number} \label{sec5}


Let $\cH'$ be the $H$-module Hilbert space defined by \eqref{eq:Hprime}. Let $\cU(\cH')$ be the unitary group of $\cH'$. Let $\cU_F(\cH')$ (resp. $\cU_*(\cH')$) be the subset that consists of unitary operators $T \in \cU(\cH')$ satisfying $-1 \notin \spec_{ess}(T)$ (resp. $-1 \notin \spec(T)$).  They are not groups since for any unitary operators $T,S$ in $\cU_F(\cH')$ or in $\cU_*(\cH'),$ their product $TS$ does not necessarily lie in the set. On the other hand, let $\cU_\cK(\cH')$  (resp. $\cU_\tr(\cH')$) be the subgroup that consists of unitary operators $T \in \cU(\cH')$ such that  $T-\id \in \cK(\cH')$  (resp.  $T-\id$ is trace class).

\begin{lemma} 
	\label{laggrassisom3}
	For $h \in H.$ Let $\Gr^2_F(\cH')$ and $\Gr^2_*(\cH') $ be as in Definition \ref{laggrass2}. Then, there are diffeomorphisms
	\begin{equation} 
		\label{eq:laggrassisom3}
		\cU_F(\cE_i) \cong \Gr^2_F(\cH'), \quad \quad  \cU_*(\cE_i) \cong \Gr^2_*(\cH'). 
	\end{equation}
\end{lemma}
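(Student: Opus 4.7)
The plan is to exhibit the claimed diffeomorphisms using the odd-unitary parametrisation of $\Gr_h(\cH')$ recorded in \eqref{eq:projform1}. Given a pair $(P,Q) \in \Gr^2_F(\cH')$ with $P = \tfrac{1}{2}\begin{pmatrix} \id & T^* \\ T & \id\end{pmatrix}$ and $Q = \tfrac{1}{2}\begin{pmatrix} \id & S^* \\ S & \id\end{pmatrix}$, I would assign
\[
\Psi(P,Q) \myeq T^*S \in \cU(\cE_i).
\]
Since both $T$ and $S$ are unitaries from $\cE_i$ to $\cE_{-i}$, the composite $T^*S$ is manifestly a unitary operator on $\cE_i$. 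The goal is then to check that $\Psi$ restricts to diffeomorphisms $\Gr^2_F(\cH') \xrightarrow{\sim} \cU_F(\cE_i)$ and $\Gr^2_*(\cH') \xrightarrow{\sim} \cU_*(\cE_i)$.

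The key step is to reconcile the spectral criterion supplied by Lemma~\ref{PQlemma2} with the defining conditions of $\cU_F(\cE_i)$ and $\cU_*(\cE_i)$. By that lemma, $(P,Q)$ is Fredholm (respectively invertible) iff $-1 \notin \spec_{ess}\bigl((a-\id) + aT^*S\bigr)$ (respectively $\notin \spec$). The algebraic identity
\[
(a - \id) + aT^*S + \id \;=\; a\bigl(\id + T^*S\bigr),
\]
together with the unitarity of $a$, shows that this is equivalent to $\id + T^*S$ being Fredholm (respectively invertible), that is, $-1 \notin \spec_{ess}(T^*S)$ (respectively $\notin \spec(T^*S)$). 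This is exactly the defining condition placing $T^*S$ in $\cU_F(\cE_i)$ (respectively $\cU_*(\cE_i)$).

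For bijectivity I would invoke the reference odd unitary $W$ coming from the isometry $h=\diag(a, WaW^*)$, with associated base projection $P_W$ via \eqref{eq:projform1}. Given $U \in \cU_F(\cE_i)$, set $S_U \myeq WU$, which is again odd-unitary from $\cE_i$ to $\cE_{-i}$, and build $Q_U$ via \eqref{eq:projform1}; the resulting pair $(P_W, Q_U)$ lies in $\Gr^2_F(\cH')$ by the spectral equivalence above and satisfies $\Psi(P_W, Q_U) = W^*S_U = U$. Smoothness of both $\Psi$ and its inverse follows from the smoothness of the parametrisation $P\leftrightarrow T$ and of composition/adjoint in the norm topology. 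The restriction to the invertible case is identical, replacing $\spec_{ess}$ by $\spec$ throughout.

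The hard part is the equivariant bookkeeping rather than any analytic step: one must carefully use the commutation relations already isolated in the proof of Lemma~\ref{hchar1}, namely that $T^*W$ and $W^*T$ commute with $a$, so that $a$ may legitimately be factored out of the expression $(a-\id)+aT^*S$ without spoiling the Fredholm/invertible property. Once this commutation is installed, the condition on the pair $(P,Q)$ collapses to a purely spectral condition on the single unitary $T^*S \in \cU(\cE_i)$, and the remainder of the proof is an unwinding of definitions.
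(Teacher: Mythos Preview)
Your argument follows essentially the same route as the paper's: both send $(P,Q)\mapsto T^*S$ and rely on the spectral criterion of Lemma~\ref{PQlemma2} to match the Fredholm/invertible conditions on the pair with the conditions $-1\notin\spec_{ess}(T^*S)$, respectively $-1\notin\spec(T^*S)$. You are in fact more explicit than the paper in reducing $(a-\id)+aT^*S+\id = a(\id+T^*S)$ and factoring out the unitary $a$, and in building a right inverse via the basepoint $P_W$; the paper simply writes down $P$ and $Q$ from $T$ and $S$ and says ``one verifies.''

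One genuine gap worth naming, which is present in the paper's proof as well as yours: the map $\Psi(P,Q)=T^*S$ is \emph{not} injective. Any two pairs whose associated unitaries satisfy $T^*S=(T')^*S'$ collapse to the same image, so what you construct with $U\mapsto (P_W,Q_U)$ is only a section of $\Psi$, not a two-sided inverse. The honest statement is that $(P,Q)\mapsto (P,\,T^*S)$ identifies $\Gr^2_F(\cH')$ with $\Gr_h(\cH')\times \cU_F(\cE_i)$; since the first factor is contractible by Kuiper's theorem, $\Psi$ is a homotopy equivalence (indeed a trivial fibre bundle with contractible fibre) rather than a literal diffeomorphism. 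This does not affect the downstream use of the lemma---only the well-definedness of $\Psi$ and the existence of your section are needed for the Maslov/winding identifications in Theorem~\ref{maswind1}---but the word ``diffeomorphism'' in the statement is being used loosely, and your proof of bijectivity inherits that imprecision.
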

\begin{proof}
	Each Fredholm (resp. invertible) pair $(P,Q)$ is mapped to $T^*S,$ where $T=\Phi(P)$ and $S=\Phi(Q)$ in view of \eqref{eq:projform1}. In particular, $T^*S \in \cU_F(\cE_i)$ (resp. $T^*S \in \cU_*(\cE_i)$). Conversely, for any unitary operators $T : \cE_i \to \cE_{-i},$ we have $P=\frac{1}{2}\begin{psmallmatrix} \id & T^* \\ T & \id \end{psmallmatrix} \in \Gr_h(\cH').$ Similarly, for an unitary  $S:\cE_i \to \cE_{-1}$ we get $Q=\frac{1}{2}\begin{psmallmatrix} \id & S^* \\ S & \id\end{psmallmatrix} \in \Gr_h(\cH').$ One verifies that such a $(P,Q)$ is a Fredholm (resp. invertible) pair. 
\end{proof}

\begin{lemma}
	$\cU_\cK(\cH')$ is weakly $H$-homotopy equivalent to $\cU_F(\cH').$
\end{lemma}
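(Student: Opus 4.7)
The plan is to prove the inclusion $\iota:\cU_\cK(\cH')\hookrightarrow\cU_F(\cH')$ is an equivariant weak homotopy equivalence by means of a continuous functional calculus deformation. First I observe that $\iota$ is well defined: for $T=\id+K$ with $K\in\cK(\cH')$, Weyl's theorem gives $\spec_{ess}(T)=\{1\}$, so $-1\notin\spec_{ess}(T)$. To establish weak $H$-equivalence it suffices, for every compact $H$-CW pair $(X,A)$ and every $H$-equivariant map $\phi:X\to\cU_F(\cH')$ with $\phi(A)\subset\cU_\cK(\cH')$, to produce an $H$-equivariant homotopy $\phi_s$ from $\phi$ to a map $\phi_1$ into $\cU_\cK(\cH')$ with $\phi_s(A)\subset\cU_\cK(\cH')$ throughout.

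Using compactness of $\phi(X)$, upper semicontinuity of the essential spectrum, and continuity of Riesz projections around spectral sets bounded away from $\spec_{ess}$, one obtains a uniform $\delta>0$ and a rank bound $N$ such that each spectral projection $E_x:=\chi_{\bar B(-1,\delta)\cap S^1}(\phi(x))$ has rank at most $N$ and depends continuously on $x$; equivariance of $\phi$ forces $E_{hx}=hE_xh^{-1}$. I then choose a continuous $\alpha:(-\pi,\pi]\to[0,1]$ equal to $1$ outside the $\delta/2$-neighborhood of $\pm\pi$ and vanishing on the $\delta/4$-neighborhood of $\pm\pi$, and define
\[
g_s(e^{i\theta})=\exp\bigl(i\theta(1-s\alpha(\theta))\bigr),\qquad \phi_s(x):=g_s(\phi(x))
\]
via continuous functional calculus. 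Equivariance of $\phi_s$ is automatic because functional calculus commutes with conjugation by the unitary $h$. By the spectral mapping theorem, $\spec_{ess}(\phi_s(x))\subset g_s(S^1\setminus B(-1,\delta))=\{e^{i(1-s)\theta}:|\theta|\leq\pi-\delta\}$, which misses $-1$ for every $s\in[0,1]$, so $\phi_s(x)\in\cU_F(\cH')$. Since $g_1-1$ vanishes outside $B(-1,\delta/2)$, the operator $\phi_1(x)-\id=(g_1-1)(\phi(x))$ is supported on $\im(E_x)$, hence has rank at most $N$, and $\phi_1(X)\subset\cU_\cK(\cH')$. For $x\in A$ with $\phi(x)\in\cU_\cK(\cH')$, the identity $(g_s-1)(1)=0$ together with the observation that a normal operator whose essential spectrum is $\{0\}$ is compact gives $(g_s-1)(\phi(x))\in\cK(\cH')$, so $\phi_s(x)\in\cU_\cK(\cH')$ throughout.

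Taking $(X,A)=(S^n,\ast)$ then yields surjectivity of $\pi_n(\cU_\cK(\cH'))_H\to\pi_n(\cU_F(\cH'))_H$, and taking $(X,A)=(S^n\times I,S^n\times\partial I)$ yields injectivity; the same argument runs verbatim for every closed subgroup $G\leq H$, giving the desired weak $H$-equivariant equivalence. The main technical obstacle I expect is making the uniform-$\delta$ and uniform-rank claim rigorous on $\phi(X)$: this amounts to observing that for each fixed $\delta$ the subset of $\cU_F(\cH')$ on which $\partial B(-1,\delta)$ is disjoint from the spectrum is open, that the associated Riesz projection is norm-continuous there, and finally to extracting a finite open subcover of the compact equivariant image $\phi(X)$.
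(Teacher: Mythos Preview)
Your argument is correct and takes a genuinely different route from the paper.  The paper proves the lemma by observing that the Calkin quotient map $\pi:\cU_F(\cH')\to \cU_*\cQ(\cH')$ is an $H$-fibre bundle with fibre $\cU_\cK(\cH')$, and then invoking Matumoto's theorem that $\cU_*\cQ(\cH')\simeq GL(\cQ(\cH'))$ is $H$-contractible; the long exact sequence of the fibration then gives the weak equivalence.  Your approach is a direct, self-contained functional-calculus deformation and avoids the cited contractibility result entirely, at the price of the spectral bookkeeping.  One simplification: the uniform rank bound on $E_x$ and the norm-continuity of $x\mapsto E_x$ are the most delicate parts of your sketch (an eigenvalue can sit on $\partial B(-1,\delta)$, so $\chi_{\bar B(-1,\delta)}(\phi(x))$ need not vary continuously with a single fixed $\delta$), but they are in fact unnecessary.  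You already know that a normal operator with essential spectrum $\{0\}$ is compact; apply the same reasoning to $\phi_1(x)-\id$: since $g_1\equiv 1$ on $S^1\setminus B(-1,\delta/2)\supset\spec_{ess}(\phi(x))$, the spectral mapping theorem gives $\spec_{ess}(\phi_1(x))=\{1\}$, hence $\phi_1(x)-\id\in\cK(\cH')$ directly.  With that shortcut the only compactness argument you actually need is the uniform $\delta$, which does follow cleanly from upper semicontinuity of $\spec_{ess}$ and a finite-cover argument as you indicate.
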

\begin{proof}
	This is an equivariant adaptation of \cite[Lemma 6.1]{15}. Let $\pi: \cB(\cH') \to \cQ(\cH')$ be the quotient map as in Section 3. Then, $\pi : \cU_F(\cH') \to \cU_*\cQ(\cH')$ is an $H$-Hilbert bundle with fibres $\cU_\cK(\cH').$ The action of $H$ acts accordingly by the rule $(gh)(u)=u(g^{-1}h)$ for $g,h \in H$ and $u \in \cU_\cK,\cU_F$ and $\cU_*\cQ.$ By \cite[Theorem 3.5]{18}, $\cU_*\cQ(\cH') \simeq GL(\cQ(\cH'))$ is $H$-contractible.
\end{proof}

Since $\cU_\tr(\cH')$ is known to be homotopy equivalent to $\cU_\cK(\cH'),$ the following holds.
\begin{corollary}
	$\pi_1(\cU_\tr(\cH'))_H \cong \pi_1(\cU_\cK(\cH'))_H \cong \pi_1(\cU_F(\cH'))_H.$
\end{corollary}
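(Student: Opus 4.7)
The plan is to assemble the corollary from two weak $H$-homotopy equivalences and invoke functoriality of $\pi_1(-)_H$ on each. First, the preceding lemma already supplies that $\cU_\cK(\cH')$ is weakly $H$-homotopy equivalent to $\cU_F(\cH')$; since a weak $H$-homotopy equivalence induces bijections on equivariant homotopy sets, one immediately obtains
\[
\pi_1(\cU_\cK(\cH'))_H \;\cong\; \pi_1(\cU_F(\cH'))_H.
\]

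For the remaining isomorphism I would show that the tautological inclusion $\iota : \cU_\tr(\cH') \hookrightarrow \cU_\cK(\cH')$ is an $H$-equivariant weak homotopy equivalence. Equivariance is automatic because the $H$-action $T \mapsto T^h$ introduced in Section \ref{sec2} is unitary conjugation, which preserves both the compact and the trace-class ideals; so $\iota$ is a well-defined $H$-map. The classical non-equivariant statement that $\iota$ is a homotopy equivalence is typically produced via a deformation retraction defined through continuous functional calculus of $U-\id$, for instance spectral truncation followed by a linear interpolation of phases. Because such a retraction is built purely from functional calculus of $U-\id$, every intermediate map commutes with all unitary intertwiners, and in particular with the $H$-action. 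The retraction is therefore $H$-equivariant at every time, so $\iota$ is an $H$-equivariant weak homotopy equivalence and yields
\[
\pi_1(\cU_\tr(\cH'))_H \;\cong\; \pi_1(\cU_\cK(\cH'))_H.
\]

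The only delicate point is to verify $H$-equivariance of the classical retraction; this is a concrete check rather than a genuine obstacle, since all standard formulas are intrinsic to the Hilbert space structure. An alternative strategy would be to represent every class in $\pi_1(\cU_\cK(\cH'))_H$ by an $H$-equivariant loop, then approximate by an $H$-equivariant loop in $\cU_\tr(\cH')$ through finite rank spectral truncation (which converges in norm and preserves equivariance), and finally check that two such approximations agree up to $H$-equivariant homotopy. This would provide the bijection on $\pi_1(-)_H$ directly without constructing a retraction at the level of spaces. Either way, composing with the first bijection delivers the full chain of isomorphisms in the corollary.
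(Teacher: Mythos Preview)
Your proposal is correct and follows essentially the same approach as the paper: the second isomorphism comes directly from the preceding lemma, and the first from the known homotopy equivalence $\cU_\tr(\cH') \simeq \cU_\cK(\cH')$. The paper simply invokes this latter fact without further comment, whereas you supply the extra (and appropriate) justification that the classical retraction, being given by functional calculus, commutes with the unitary $H$-action and is therefore $H$-equivariant.
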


Without loss of generality, consider an equivariant path of unitary operators $f:[0,1] \to \cU_F$ whose endpoints are in $\cU_*,$ i.e.  the path $f(t)$ is an element of  $\pi_1(\cU_F(\cH'),\cU_*(\cH'))_H,$ or equivalently, of $\pi_1(\cU_F(\cH'))_H$ by the $H$-contractibility of $\cU_*.$ If the endpoints of $f(t)$ are not in $\cU_*(\cH'),$ then we may perturb the endpoints $f(k)$ by $e^{-i\epsilon}$ for some $\epsilon >0$ such that $-1 \notin \spec(f(k)e^{-i\epsilon})$ for $k=0,1.$ This means that for $t \in [0,1],$ by a perturbation of the endpoints, a  general equivariant path $f(t)$ can be concatenated to another path $p(t) \in \cU_\ast(\cH')$ such that $p(0)=f(1)$ and $p(1)=f(0),$ forming a loop $f(t) \ast p(t).$ This concatenation is independent of the choice of $p(t).$ This suggests that the consideration of an equivariant loop can be relaxed a little to an equivariant path of unitary operators, with perturbed endpoints if necessary.

As inspired by  \cite[Lecture 10]{19}, we give the following definition.

\begin{definition}
	\label{equivFredDet}
	For $h \in H,$ define the equivariant Fredholm determinant by 
	\[
	\Det_h  : \pi_1(\cU_\tr(\cH'))_H \to \C
	\]
	\begin{equation}
		\label{eq:equivFredDet}
		\Det_h(f(t)):= \exp\Big(\int^1_0 \tr_h(f(t)^{-1} \frac{d}{dt}f(t)) dt\Big)
	\end{equation}
	where $\tr_h(f^{-1}df):= \tr(hf^{-1}df).$
\end{definition}

\begin{lemma} 
	\label{Fdetmulti}
	For $h \in H,$ the equivariant Fredholm determinant $\Det_h$ is multiplicative, i.e. $\Det_h(f_tg_t)=\Det_h(f_t)\Det_h(g_t).$
\end{lemma}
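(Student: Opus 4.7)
The plan is to derive multiplicativity from the product rule for derivatives, cyclicity of the trace, and pointwise equivariance of the unitary paths. First, I would compute the logarithmic derivative of the pointwise product via the Leibniz rule:
\begin{equation*}
(f_t g_t)^{-1}\tfrac{d}{dt}(f_t g_t) \;=\; g_t^{-1} f_t^{-1}\bigl(f_t' g_t + f_t g_t'\bigr) \;=\; g_t^{-1}(f_t^{-1} f_t')\, g_t + g_t^{-1} g_t',
\end{equation*}
where $f_t':= df_t/dt$. Applying $\tr(h\,\cdot\,)$ to both sides splits the integrand into two contributions.

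The key step is to simplify the first contribution. Since the paths lie in $\cU_\tr(\cH')$, the operator $f_t^{-1} f_t'$ is of trace class, so the trace is cyclic with respect to multiplication by bounded operators. Combined with the pointwise commutation $h g_t = g_t h$ (which is valid for a strongly equivariant representative of $g$), one obtains
\begin{equation*}
\tr\bigl(h\, g_t^{-1}(f_t^{-1} f_t')\, g_t\bigr) \;=\; \tr\bigl(g_t h g_t^{-1}\, f_t^{-1} f_t'\bigr) \;=\; \tr\bigl(h\, f_t^{-1} f_t'\bigr),
\end{equation*}
so that, pointwise in $t$, one has $\tr_h\bigl((f_t g_t)^{-1}(f_t g_t)'\bigr) = \tr_h(f_t^{-1} f_t') + \tr_h(g_t^{-1} g_t')$.

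Finally, integrating over $t \in [0,1]$ and using $\exp(a+b)=\exp(a)\exp(b)$ yields
\begin{equation*}
\Det_h(f_t g_t) \;=\; \exp\!\Bigl(\int_0^1 \tr_h(f_t^{-1} f_t')\,dt \;+\; \int_0^1 \tr_h(g_t^{-1} g_t')\,dt\Bigr) \;=\; \Det_h(f_t)\,\Det_h(g_t),
\end{equation*}
which is the claimed multiplicativity.

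The main point to verify is the strong equivariance $h g_t = g_t h$ used in the cyclic manipulation. This is not automatic from membership in $\pi_1(\cU_\tr(\cH'))_H$, but it can be arranged by selecting an $h$-equivariant representative of the class $[g]$, in analogy with the explicit equivariant unitary path $\mathcal{W}(t)$ appearing in the proof of Theorem \ref{specisom}, whose commutation with $h$ is built in via $h$-invariant spectral data. Once such representatives are fixed, the trace-class integrability and the exchange of $d/dt$, $\tr$, and $\int$ are standard because $f_t - \id$ and $g_t - \id$ belong to the trace ideal by definition of $\cU_\tr(\cH')$.
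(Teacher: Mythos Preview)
Your proof is correct and follows essentially the same route as the paper: expand $(f_tg_t)^{-1}\tfrac{d}{dt}(f_tg_t)$ by the product rule, use the commutation $hg_t=g_th$ to pull $h$ through, and then apply conjugation invariance of the trace before integrating and exponentiating. The paper invokes the commutation of $h$ with $g_t$ without comment, whereas you explicitly flag that this requires choosing an $h$-equivariant representative of the class; that extra care is appropriate but does not change the argument.
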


\begin{proof}
	Let $f_t=f(t),g_t=g(t) \in  \pi_1(\cU_\tr(\cH'))_H.$ By definition, we have
	$\tr_h((f_tg_t)^{-1} d(f_tg_t))$ $= \tr(h(f_tg_t)^{-1} d(f_tg_t)).$
	Then, by the conjugation invariance of trace, we have
	\begin{align*}
		\tr(h(f_tg_t)^{-1} d(f_tg_t))
		&= \tr(hg^{-1}_t f^{-1}_t [ (df_t)g_t+ f_t (dg_t)]) \\
		&= \tr(g^{-1}_t (hf^{-1}_t (df_t))g_t + hg^{-1}_t (dg_t)) \\
		&= \tr(hf^{-1}_t (df_t)) + \tr(hg^{-1}_t (dg_t)).
	\end{align*} By taking integration and exponential, the lemma follows.
\end{proof}

\begin{definition} 
	\label{equivwinding}
	For $h \in H,$ assume that a good grid partition is chosen. Define the equivariant winding number by
	\[
	w_h(f(t)): \pi_1(\cU_F(\cH'))_H \to \C
	\]
	\begin{equation}
		\label{eq:equivwind0}
		w_h(f(t))= \sum^n_{\substack{j=1 \\ 0 \leq \theta_{i} < \epsilon_{i} }} \left[ \tr \left( h|_{\ker\left(f(t_j)-e^{i(\pi+\theta_j)}\right)}\right) 
		- \tr \left( h|_{\ker\left(f(t_{j-1})-e^{i(\pi+\theta_{j-1})}\right)}\right) \right].
	\end{equation}
\end{definition}

In analogy to \cite[Proposition 5.7]{15}, we have an equivariant analog of some properties of winding numbers.
\begin{proposition}
	\label{windingprop0}
	For $h \in H,$  $w_h$ satisfies the following properties.
	\begin{enumerate}[(a)]
		\item (Reversal) Let $\widetilde{f}(t)$ be the path $f(t)$ traversing in the opposite direction. Then, 
		\[
		w_h(\widetilde{f}(t))=-w_h(f(t)).
		\]
		\item (Homotopy invariance) Let $f(t),f'(t) \in \pi_1(\cU_F(\cH'))_H$ be such that $f(0)=f'(0).$ Then, 
		\begin{equation}
			w_h(f(t))=w_h(f'(t)).
		\end{equation}
		\item (Path additivity) Let $f(t)=f_1(t) \ast f_2(t)$ be the concatenation of two paths. Then, 
		\begin{equation}
			w_h(f(t))=w_h(f_1(t))+w_h(f_2(t)).
		\end{equation}
		\item If $f\equiv f(t)$ for all $t,$ then, 
		\begin{equation}
			w_h(f)=0.
		\end{equation}	
		\item Let $f : [0,1] \to \cU_{\tr}(\cH')$ be a $C^1$-curve. Then,
		\begin{equation} 
			\label{eq:windingtracelog}
			w_h(f(t))= \frac{1}{2 \pi i} \Big( \int^1_0 \tr \log(hf(t)) dt - \tr \log(hf(1)) + \tr \log(hf(0))\Big).
		\end{equation}
	\end{enumerate}
\end{proposition}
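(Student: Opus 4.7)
Properties (a), (c), and (d) should follow by direct bookkeeping on the grid partition in Definition \ref{equivwinding}, parallel to the spectral-flow arguments of Propositions \ref{homotopyinv} and \ref{spfwelld}. For (a), reversing the orientation of $f$ amounts to the reindexing $s_j:=1-t_{n-j}$, which replaces each summand of \eqref{eq:equivwind0} by its negative. For (c), I would choose a good grid partition containing the concatenation point $t=1/2$ as a node; the defining sum then splits into the two sub-sums for $f_1$ and $f_2$. For (d), a constant path $f$ produces vanishing differences, because both the eigenspace $\ker(f-e^{i(\pi+\theta)})$ and the restriction of $h$ to it are independent of $t$.

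Property (b) requires a compactness argument modelled on the proof of Proposition \ref{homotopyinv}. Given a continuous equivariant homotopy $H:[0,1]\times[0,1]\to\cU_F(\cH')$ with fixed endpoints, I would cover $[0,1]^2$ by finitely many preimages $H^{-1}(N_i)$ of small neighbourhoods $N_i\subset\cU_F(\cH')$ on which the equivariant spectral projection onto a small arc $\{e^{i(\pi+\theta)}:\,0\le\theta<\varepsilon\}$ is locally constant. After refining a good grid to fit inside these preimages, the local contribution $\tr(h|_{\ker(\cdot-e^{i(\pi+\theta)})})$ in each cell depends only on the equivariant bundle of eigenspaces and not on the homotopy parameter $s$, so the defining sum of $w_h$ is constant along the homotopy.

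Property (e) is the main content. The plan is to combine the Fundamental Theorem of Calculus on the open set $\{t:-1\notin\spec(hf(t))\}$ with an accounting of the branch jumps of the operator logarithm at the remaining points. Once a branch of $\log$ is fixed with cut along the negative real axis, the map $u\mapsto\tr\log(u)$ is smooth on $\{-1\notin\spec u\}\subset\cU_\tr(\cH')$; since $h$ commutes with $f(t)$, the product $hf(t)$ lies in $\cU_\tr$ and
\begin{equation*}
\frac{d}{dt}\tr\log(hf(t))=\tr\bigl(h\,f(t)^{-1}f'(t)\bigr)
\end{equation*}
away from crossings. I would take a good grid partition fine enough that at most one eigenvalue of $hf(t)$ crosses $-1$ in each $[t_{j-1},t_j]$. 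At such a crossing the branch jump of $\tr\log(hf(t))$ equals $\pm 2\pi i\cdot\tr(h|_E)$, where $E$ is the corresponding eigenspace; this is precisely the cell-wise contribution appearing in \eqref{eq:equivwind0}. Summing over the grid and telescoping the regular pieces assembles $\int_0^1\frac{d}{dt}\tr\log(hf(t))\,dt$ together with the boundary terms, yielding \eqref{eq:windingtracelog}.

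The hard part is (e): one must carefully fix conventions for the operator logarithm so that each passage of an eigenvalue through $-1$ on the unit circle contributes the correct sign and the correct $h$-weighted multiplicity $\tr(h|_E)$, and verify that the small-arc perturbation parameter $\theta_j$ in Definition \ref{equivwinding} is irrelevant modulo these branch jumps. This is the single analytic ingredient; (a)--(d) are entirely combinatorial on the grid partition.
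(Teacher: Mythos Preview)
The paper does not prove this proposition: it is stated without proof, with the remark that it is ``an equivariant analog of some properties of winding number'' in analogy to \cite[Proposition~5.7]{kirk2000eta}. Your sketch therefore goes beyond what the paper supplies, and the arguments you outline for (a)--(d) are the standard ones and are correct.

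For (e), your approach---fixing a branch of $\log$ cut along the negative real axis, differentiating $\tr\log(hf(t))$ on the complement of the crossing set, and accounting for the $\pm 2\pi i\,\tr(h|_E)$ branch jumps as the grid contributions---is the right strategy and matches how the formula is actually used later in the proof of Theorem~\ref{specmas}. Note, however, that the formula \eqref{eq:windingtracelog} as printed appears to contain typos: comparing with its invocation in the proof of Theorem~\ref{specmas}, the integrand should be $\frac{d}{dt}\tr_h\log(f(t))$ rather than $\tr\log(hf(t))$, and the signs of the boundary terms are reversed there. Your sketch already silently uses the corrected form (you assemble $\int_0^1\frac{d}{dt}\tr\log(hf(t))\,dt$), which is the version that is both provable by your method and consistent with the rest of the paper.
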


\begin{theorem} 
	\label{maswind1}
	Let $h=\diag(a,WaW^*) \in H.$ Let $(\cL_1(t),\cL_2(t)) \in \pi_1(\fL^2_F(\cH'))_H.$ Let $T(t)$ and $S(t)$ be the associated unitary operators of $\cL_1(t)$ and $\cL_2(t)$ respectively. Then, 
	\begin{equation} 
		\label{eq:maswind1}
		\mas_h(\cL_1(t),\cL_2(t)) = w_h(T^*(t)S(t))=w(aT^*(t)S(t)) 
	\end{equation} where $w$ is the usual winding number.
\end{theorem}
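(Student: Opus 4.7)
The plan is to push both invariants through the Lagrangian–unitary correspondence of Lemma~\ref{laggrassisom3} and reduce them to the same spectral count for the unitary path $T^*(t)S(t)$ on $\cE_i$ near the eigenvalue $-1$.

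For the first equality $\mas_h(\cL_1,\cL_2)=w_h(T^*S)$, the essential step is to identify $\cL_1(t)\cap\cL_2(t)$ with $\ker(\id+T^*(t)S(t))$ as $h$-spaces. I would invoke Lemma~\ref{PQlemma2} (transcribed to $\Gr_h(\cH')$) to obtain $\cL_1(t)\cap\cL_2(t)\cong \ker\bigl(a(\id+T^*(t)S(t))\bigr)$, which collapses to $\ker(\id+T^*(t)S(t))$ since $a$ is unitary; the bijection is $(x,S(t)x)^T\mapsto x$. To make it $h$-equivariant with $h$ reducing to multiplication by $a$, I would apply Lemma~\ref{hchar1} to $P$ and $Q$ separately, which yields $TaT^*=SaS^*=WaW^*$. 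The equality $TaT^*=SaS^*$ rearranges to $aT^*S=T^*Sa$, so $[a,T^*S]=0$ and $\ker(\id+T^*S)$ is $a$-invariant; the equality $SaS^*=WaW^*$ rearranges to $WaW^*S=Sa$, so
\[
h\cdot(x,S(t)x)^T=(ax,\,WaW^*S(t)x)^T=(ax,\,S(t)ax)^T,
\]
confirming the equivariance. Consequently $\tr(h|_{\cL_1(t_j)\cap\cL_2(t_j)})=\tr(a|_{\ker(T^*(t_j)S(t_j)-e^{i\pi})})$ at every grid time. A good grid partition for $(\cL_1,\cL_2)$ is simultaneously a good partition for $T^*S$, so the sum in Definition~\ref{equivmaslov} matches term by term the sum in Definition~\ref{equivwinding}, the small perturbation parameters $\theta_j$ absorbing the passage from literal intersections to nearby spectral arcs around $e^{i\pi}$.

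For the second equality $w_h(T^*S)=w(aT^*S)$, I would exploit the commutativity $[a,T^*(t)S(t)]=0$ just established, together with the determinantal description coming from Definition~\ref{equivFredDet} and Proposition~\ref{windingprop0}. Since $a$ commutes with the entire path $T^*S$, the space $\cE_i$ splits as an orthogonal sum of $a$-isotypic components, each preserved by $T^*S$; on the component where $a$ acts by the scalar $\mu$, a crossing of an eigenvalue of $T^*S$ through $-1$ contributes $\mu$ to $w_h(T^*S)$, while the corresponding eigenvalue of $aT^*S$, which equals $\mu$ times that of $T^*S$, is tracked by the classical $w(aT^*S)$. Assembling the contributions from all isotypic blocks, and using the multiplicativity and homotopy properties in Proposition~\ref{windingprop0}, the two counts coincide. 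The \textbf{main obstacle} is the rigorous matching in the first equality between the literal intersection at a grid endpoint (which may be zero on an invertible pair) and the perturbed spectral subspace appearing in Definition~\ref{equivwinding}: this requires a simultaneously good-grid choice for both sides, together with the well-definedness and homotopy-invariance already established for $\mas_h$ and $w_h$, so that both conventions isolate the same isolated crossings through $-1$.
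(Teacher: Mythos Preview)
Your treatment of the first equality $\mas_h(\cL_1,\cL_2)=w_h(T^*S)$ is correct and is exactly the paper's argument, only made more explicit: pass through the correspondence \eqref{eq:laggrassisom2}, invoke Lemma~\ref{PQlemma2} to obtain the $h$-equivariant identification $\cL_1(t)\cap\cL_2(t)\cong\ker\bigl(a(\id+T^*(t)S(t))\bigr)=\ker(\id+T^*(t)S(t))$, and then match Definitions~\ref{equivmaslov} and~\ref{equivwinding} term by term on a common good grid. Your extra equivariance verification via Lemma~\ref{hchar1} (yielding $[a,T^*S]=0$ and $h\cdot(x,Sx)^T=(ax,Sax)^T$) is a useful detail that the paper leaves implicit.

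Your argument for the second equality $w_h(T^*S)=w(aT^*S)$ has a genuine gap. The commutation $[a,T^*S]=0$ and the isotypic splitting are fine, but the bookkeeping does not close: on the $\mu$-eigenspace of $a$, a crossing of an eigenvalue of $T^*S$ through $-1$ contributes the weight $\mu$ to $w_h(T^*S)$, whereas the corresponding eigenvalue of $aT^*S$ is $\mu$ times that of $T^*S$ and hence crosses $-\mu$, not $-1$. The classical winding number $w$ records crossings through $-1$ with weight $1$. Even for a loop (where the number of crossings through any fixed regular value on the circle equals the degree), the $\mu$-block therefore contributes $n_\mu$ to $w(aT^*S)$ but $\mu\,n_\mu$ to $w_h(T^*S)$, so the totals $\sum_\mu n_\mu$ and $\sum_\mu \mu\,n_\mu$ do not match. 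A one-dimensional test ($\cE_i=\C$, $a=e^{i\alpha}$, $T^*(t)S(t)=e^{2\pi i t}$) already gives $w_h=e^{i\alpha}$ while $w(aT^*S)=1$. The paper's own proof does not treat this second equality separately; it simply cites the two definitions and Lemma~\ref{PQlemma2}, which justify the first equality but not the second as literally written.
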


\begin{proof}
	Let $(P(t),Q(t)) \in \pi_1(\Gr^2_F(\cH'))_H.$ By \eqref{eq:laggrassisom2}, we have the map $(\cL_1(t),\cL_2(t)) \mapsto (\id -P_{\cL_1(t)}, P_{\cL_2(t)}).$ Then, the claim follows from Definition \ref{equivmaslov}, Definition \ref{equivwinding} and Lemma \ref{PQlemma2} that there is an isomorphism between the $h$-space $\ker(P(t)) \cap \im(Q(t))$ and $\ker(a(\id+T^*(t)S(t)))$  for all $t.$
\end{proof}

Let $U \in \cU_F(\cH').$ Let $\cH_0:=\ker(U+\id)$ and $\cH_1:=\cH_0^{\perp}.$  Decompose $\cH'$ into the sum
\begin{equation}
	\label{eq:splitting0}
	\cH'=\cH_0\oplus \cH_1.
\end{equation}
Then, the unitary operator $U$ splits accordingly into $-\id_{\cH_0}\oplus \widetilde{U}$ where $-1\notin \text{spec}(\widetilde{U}).$ For $t\in [0,1],$ construct a contractible path by 
\[
f(t)=-\id_{\cH_0} \oplus \exp(t\log\widetilde{U}),
\] cf.\cite[Lemma 6.1(2)]{15}.  Let $h=\diag(a,WaW^*)$ where $a \in \cU(\cE_i).$ Consider an $h$-equivariant path $f_a(t)$ whose $h$-action is given by
\begin{equation}\label{eq:equivpath0}
	f_a(t)=a\odot f(t):=-a\:\id_{\cH_0}\oplus \exp(t\log (a\widetilde{U})),
\end{equation}
where $a$ is independent of $t\in [0,1].$ Similarly, by splitting $V \in \cU_\cK(\cH')$ into $-\id_{\cH_0}\oplus \widetilde{V}$ with respect to \eqref{eq:splitting0}, we consider
\begin{equation}
	\label{eq:equivpath1}
	g_a(t)=-a\:\id_{\cH_0}\oplus \exp(t\log (a\widetilde{V})).
\end{equation}
Then, by the ideal property of the group of compact operators $\cK=\cK(\cH'),$  the multiplication
\begin{equation}\label{eq:equivpath2}
	q_a(t):=a \odot (f(t) g(t)) =-a\id_{\cH_0} \oplus \exp(t \log (a \widetilde{U}\widetilde{V}))
\end{equation} is well-defined. This also works for $U \in \cU_\cK(\cH'), V \in \cU_F(\cH') $ or $U,V \in \cU_\cK(\cH').$ 

\vspace{0.5em}
The following is a slight generalisation of double index in \cite[Definition 6.2]{15}.

\begin{definition}
	Let $h=\diag(a,WaW^*) \in H.$ Let $U \in \cU_F(\cH')$ and $V\in \cU_\cK(\cH').$ Let $f(t) \in \pi_1(\cU_\cF(\cH'))_H$ and $g(t),q(t)  \in \pi_1(\cU_\cK(\cH'))_H.$ Define the $h$-equivariant double index of $U$ and $V$ by
	\begin{align}\label{eq:doubleind1}
		\tau_h &: \cU_\cF(\cH') \times \cU_\cK(\cH') \longrightarrow \C \nonumber \\
		\tau_h(U,V)&:= w_h(f(t)) + w_h(g(t))-w_h(q(t))
	\end{align} where $w_h(f(t)):=w(f_a(t))$ and the same for $g(t)$ and $q(t)$ respectively.
\end{definition}

\begin{proposition} 
	\label{doubleind2}
	\begin{enumerate}[(a)]
		\item If either one of the unitary operators is the identity $\id,$ then $$\tau_h(U,V)=0.$$
		\item If $V=U^{-1},$ then  
		\[
		\tau_h(U,U^{-1})= w_h(f(t))+w_h(f^{-1}(t)).
		\]
	\end{enumerate}	
\end{proposition}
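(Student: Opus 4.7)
The plan is to unpack the explicit description of the contractible paths $f(t), g(t), q(t)$ in \eqref{eq:equivpath0}--\eqref{eq:equivpath2} and to reduce both parts of the proposition to the vanishing of $w_h$ on the constant path $\id$, which is furnished by Proposition \ref{windingprop0}(d).

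For part (a), I would first set $U = \id$ and read off from \eqref{eq:equivpath0} that $\cH_0 = \ker(\id + \id) = \{0\}$ and $\widetilde U = \id$, so $f(t) = \exp(t \log \id) = \id$ for every $t \in [0,1]$; Proposition \ref{windingprop0}(d) then gives $w_h(f(t)) = 0$. Since $UV = V$, one has $\cH_0^{UV} = \cH_0^V$ and $\widetilde{UV} = \widetilde V$, so from \eqref{eq:equivpath2} the path $q(t)$ coincides pointwise with $g(t)$ and $w_h(q(t)) = w_h(g(t))$. Substituting these into \eqref{eq:doubleind1} collapses the right-hand side to $0$. The case $V = \id$ is symmetric, after noting that $\id \in \cU_\cK(\cH')$ trivially.

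For part (b), I would first check that multiplying $U^{-1}x = -x$ on the left by $U$ yields $\ker(U^{-1}+\id) = \ker(U+\id) = \cH_0$, so that the same eigenspace decomposition is common to $U$ and $U^{-1}$ with $\widetilde{U^{-1}} = \widetilde U^{-1}$. A direct computation from \eqref{eq:equivpath0} then gives
\[
g(t) = -\id_{\cH_0} \oplus \exp\bigl(-t \log \widetilde U\bigr) = f(t)^{-1},
\]
so that $f(t)g(t) = \id$ for every $t$. Since $UV = \id$, the argument from (a) shows that the path $q(t)$ is constant at $\id$ and $w_h(q(t)) = 0$. The identity $\tau_h(U, U^{-1}) = w_h(f(t)) + w_h(f^{-1}(t))$ then drops out of \eqref{eq:doubleind1}.

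The only subtlety I expect to check is that the twisting by $a$ in $f_a(t) = a \odot f(t)$ does not interfere with the argument when $U = \id$: although $f_a(t) = \exp(t \log a)$ is itself non-constant, the equivariant winding number defined by \eqref{eq:equivwind0} depends on $f(t)$ through the spectral data $\ker(f(t) - e^{i(\pi+\theta)})$ rather than through $f_a(t)$, so Proposition \ref{windingprop0}(d) applies unambiguously to the constant path at $\id$. Beyond this bookkeeping I do not anticipate any serious obstacle.
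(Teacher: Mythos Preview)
Your proposal is correct and follows essentially the same route as the paper's proof: in (a) you collapse $w_h(f)=0$ and $q=g$ when $U=\id$, and in (b) you identify $g=f^{-1}$ so that $q$ is constant and $w_h(q)=0$. The one minor difference is that in (b) the paper does not verify that the canonical path for $U^{-1}$ equals $f(t)^{-1}$; instead it simply \emph{chooses} $g_a(t)=f_a^{-1}(t)$ and justifies this by homotopy invariance of $w_h$ (Proposition~\ref{windingprop0}(b)), since both paths share the endpoints $\id$ and $U^{-1}$. Your direct computation---noting $\ker(U^{-1}+\id)=\ker(U+\id)$ and $\log(\widetilde U^{-1})=-\log\widetilde U$ on the principal branch---is slightly more economical and avoids that appeal. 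Your handling of the $a$-twisting subtlety is also in line with the paper, which in its own proof writes $w_h(\id)=w(a)=0$ and dismisses it via Proposition~\ref{windingprop0}(d) on the grounds that $a$ is independent of~$t$.
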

\begin{proof}
	For (a), if both $U$ and $V$ are $\id,$ then $$\tau_h(\id,\id)=2w_h(\id)-w_h(\id)=w_h(\id)=w(a)=0$$ where the last equality follows from Proposition ~\ref{windingprop0}(d) as $a$ is independent of $t.$ 
	If $U=\id$ but $V \neq \id,$ then $q_a(t)=g_a(t)$ and  $\tau_h(\id,V)=w(g_a(t))-w(g_a(t))=0.$ The same applies when $V=\id$ but $U \neq \id.$
	For part (b), let $V=U^{-1}.$ Choose\footnote[2]{Once we fix both ends $g_a(0)=\id$ and $g_a(1)=U^{-1},$ this is possible to obtain by homotopy invariance.} $g_a(t)=f^{-1}_a(t)$ for all $t.$ Then, 
	\begin{align*}
		\tau_h(U,U^{-1}) &= w_h(f(t))+w_h(f^{-1}(t)) - w_h(f(t)f^{-1}(t)) \\
		&= w_h(f(t))+w_h(f^{-1}(t)) - w_h(\id) \\
		&= w_h(f(t))+w_h(f^{-1}(t)).  \qedhere
	\end{align*} 
\end{proof}

\begin{remark}
	\begin{enumerate}
		\item When the unitary operator $a$ is the identity operator, then the above definition and proposition reduce to the non-equivariant version of \cite{15}.
		\item All paths here are in the \textit{general position}. Hence, it is valid to consider the unitary operator $U \in \cU_F(\cH')$ as an endpoint of $f_a(t)$ due to the perturbation term $e^{ \pm i\theta}$ in \eqref{eq:equivwind0}. This would ensure that $f_a(t)\pm e^{i\theta}\cdot\id$ is compact.    
	\end{enumerate}
\end{remark}

\begin{corollary}
	Let $h=\diag(a,WaW^*) \in H.$ For $t \in [0,1],$ let $(\cL_0(t),\cL_1(t))$ be the pair of equivariant Lagrangian paths which correspond to the equivariant path of unitary operators $T^*(t)S(t).$ Let $U=T^*(1)S(1).$ Then, 
	\begin{equation}
		\mas_h(\cL_0(t),\cL_1(t))+\mas_h(\cL_1(t),\cL_0(t))= \tau_h(aU,aU^{-1}).
	\end{equation}
\end{corollary}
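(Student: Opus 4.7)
The plan is to apply Theorem \ref{maswind1} to each term on the left-hand side, converting Maslov indices into equivariant winding numbers, and then invoke Proposition \ref{doubleind2}(b) to recognize the resulting sum as the double index $\tau_h(aU, aU^{-1})$.

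The first step I would carry out is to establish that $a$ commutes with $T^*(t) S(t)$ for every $t$. Since $P(t), Q(t) \in \Gr_h(\cH')$ both commute with $h = \diag(a, W a W^*)$, Lemma \ref{hchar1} applied to each projection yields $T a T^* = S a S^* = W a W^*$, so $a$ commutes with $T^*(t) S(t)$. In particular $aU = Ua$, and the symbols $aU$, $aU^{-1}$ appearing in the statement are unambiguously defined. Next, Theorem \ref{maswind1} gives
\begin{align*}
\mas_h(\cL_0(t), \cL_1(t)) &= w_h(T^*(t) S(t)), \\
\mas_h(\cL_1(t), \cL_0(t)) &= w_h(S^*(t) T(t)),
\end{align*}
and with $f(t) := T^*(t) S(t)$ (so $f(1) = U$ and $S^*(t) T(t) = f^{-1}(t)$), the left-hand side of the corollary becomes $w_h(f) + w_h(f^{-1})$.

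To finish, I would apply Proposition \ref{doubleind2}(b) to the path $f$. Choosing $g_a(t) := f_a(t)^{-1}$, the associated product path $f(t) f^{-1}(t)$ is the constant identity, whose equivariant winding vanishes by Proposition \ref{windingprop0}(d). This produces $\tau_h(aU, aU^{-1}) = w_h(f) + w_h(f^{-1})$, matching the expression identified above.

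The main obstacle is essentially notational: reconciling the equivariant double index $\tau_h(aU, aU^{-1})$ with the inverse-pair form of Proposition \ref{doubleind2}(b). The commutativity $aU = Ua$ established in the first step is the crucial input that allows the correction term $w_h(q)$ to drop out, so that the double index collapses precisely to the sum of equivariant windings obtained from the two Maslov indices.
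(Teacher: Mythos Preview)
Your proof is correct and follows essentially the same approach as the paper: apply Theorem \ref{maswind1} to convert both Maslov indices to equivariant winding numbers, observe that $S^*(t)T(t) = (T^*(t)S(t))^{-1}$, and invoke Proposition \ref{doubleind2}(b). Note that the paper's own proof in fact concludes with $\tau_h(U,U^{-1})$ rather than $\tau_h(aU,aU^{-1})$ as written in the statement; your additional commutativity argument is an attempt to reconcile this notational discrepancy, which the paper itself leaves unaddressed.
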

\begin{proof}
	By Theorem ~\ref{maswind1}, we have $\mas_h(\cL_1(t),\cL_0(t))=w_h(S^*(t)T(t)).$ Since both $T(t)$ and $S(t)$ are paths of unitary operators, we have $S^*(t)T(t)=(T^*(t)S(t))^{-1}.$ By Proposition \ref{doubleind2}(b), we obtain
	\begin{align*}
		\mas_h(\cL_0(t)&,\cL_1(t))+\mas_h(\cL_1(t),\cL_0(t)) \\
		&= w_h(T^*(t)S(t)) + w_h((T^*(t)S(t))^{-1}) 
		= \tau_h(U,U^{-1}). \qedhere
	\end{align*}  
\end{proof}
This tells us that, in general, the equivariant Maslov index of a Lagrangian pair is \textit{non-commutative}. For a non-trivial pair, this is commutative when the inverse path $g(t)$ is also the reversal of $f(t)=T^*(t)S(t),$ i.e. for all $t\in [0,1],$ 
\[
(T^*(t)S(t))^{-1}=T(1-t)S(1-t).
\] Explicitly, the reversal is given by 
\[
\overline{g}(t)
=-\id_{\cH_0}\oplus \exp((1-t) \log \widetilde{U}) 
=-\id_{\cH_0}\oplus \widetilde{U}\exp(-t \log \widetilde{U})
\]
and it is evidently to see that $q(t)$ reduces to the identity, which leads to the vanishing of $\tau_h(U,U^{-1})$ for $U=T^*(1)S(1).$ 
Next, since any two unitary operators can be connected to the identity operator by \eqref{eq:equivpath0}, we can therefore consider \textit{general} $h$-equivariant paths whose endpoints are not necessarily the identity operators. This allows us to define a double index \textit{relative} to the identity. 
\begin{definition}\label{doubleind1}
	Let $h\in H.$ For $t \in [0,1],$ let $f(t)$ and $g(t)$ be any $h$-equivariant continuous paths in $\cU_F(\cH')$ and $\cU_\cK(\cH')$ respectively. Define the \textit{relative $h$-double index} of $f(t)$ and $g(t)$ by 
	\begin{align} 
		\label{eq:doubleind0}
		\tau_h(f(t),g(t))&:=\tau_h(f(1),g(1))-\tau_h(f(0),g(0)) \\
		&=  w_h(f(t)) + w_h(g(t))-w_h(q(t)). \nonumber
	\end{align} 
\end{definition}

To see that this is well-defined, we consider the following skeleton path diagram.  

\begin{center}
	\includegraphics[width=.60\linewidth]{./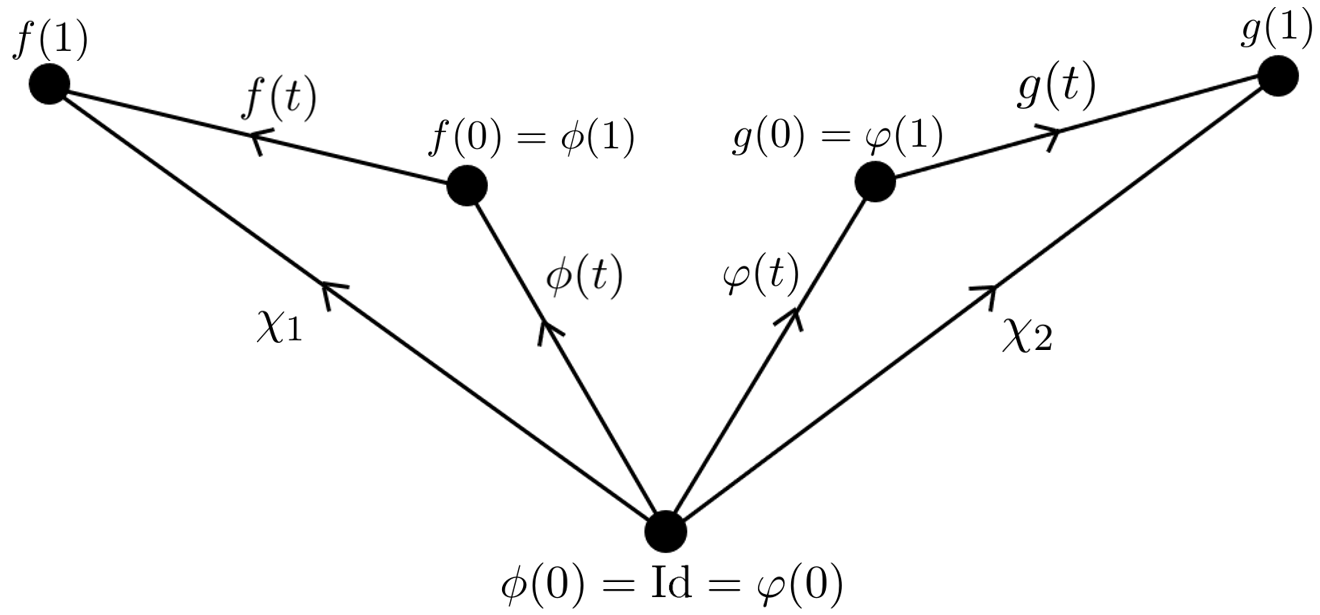}
\end{center}
Let $f$ and $g$ be \textit{any} $h$-equivariant paths as in Definition~\ref{doubleind1}. Let $\overline{\phi}$ and $\overline{\varphi}$ be the reversal of $\phi$ and $\varphi$ respectively. Then,  $f=\chi_1 \ast \overline{\phi}$ and $g=\chi_2 \ast \overline{\varphi}.$ By Proposition ~\ref{doubleind2}(c), we have 
\begin{align}
	&w_h(f)=w_h(\chi_1)+ w_h(\overline{\phi}),\quad w_h(g)=w_h(\chi_2)+ w_h(\overline{\varphi}), \label{eq:conpath1}\\
	&w_h(fg)=w_h((\chi_1 \ast \overline{\phi})(\chi_2 \ast \overline{\varphi}))= w_h(\chi_1\chi_2 )+ w_h(\overline{\phi}\overline{\varphi}). \label{eq:conpath2}
\end{align} The last equality can be seen as follows. Construct a path explicitly by
\begin{equation}\label{eq:concatenation1}
	(\chi_1 \ast \overline{\phi})(\chi_2 \ast \overline{\varphi})=
	\begin{cases}
		-\id \oplus \exp((1-2t) \log(\widetilde{\overline{\phi}}(0)\widetilde{\overline{\varphi}}(0))), \quad \quad 0 \leq t \leq \frac{1}{2}, \\
		-\id \oplus \exp((2t-1) \log(\widetilde{\chi}_1(1)\widetilde{\chi}_2(1))), \quad \frac{1}{2} \leq t \leq 1,
	\end{cases} 
\end{equation} where the four operators split into the form $r=-\id_{\cH_0} \oplus  \widetilde{r}$ such that $-1 \notin \text{spec}(\widetilde{r}).$ Then, we observe that the concatenation of the paths \eqref{eq:concatenation1} coincide with $(\chi_1\chi_2)\ast (\overline{\phi}\overline{\varphi}).$ By Proposition ~\ref{doubleind2}(c), we obtain \eqref{eq:conpath2}. 

Then, by \eqref{eq:conpath1},\eqref{eq:conpath2}, Proposition ~\ref{doubleind2}(a) and \eqref{eq:doubleind0}, we obtain
\begin{align*}
	\tau_h(f(1),g(1))
	&=w_h(f)+w_h(g)-w_h(fg) \\
	&= w_h(\chi_1)+ w_h(\overline{\phi})+ w_h(\chi_2)+ w_h(\overline{\varphi})+
	w_h(\chi_1\chi_2 )+ w_h(\overline{\phi}\overline{\varphi}) \\
	&= \tau_h(\chi_1(1),\chi_2(1)) - \tau_h(\phi(1),\varphi(1)) .
\end{align*} This justifies the term ``relative'' for the double index where $f$ and $g$ are general paths with possibly non-trivial endpoints. Now, we conclude some properties of the $h$-double index in the following, which are essentially an extension from that of the $h$-winding number $w_h$. They are also a slight generalisation of \cite[Proposition 6.3]{15}.

\begin{proposition}
	\label{doubleindproperties}
	For $h \in H,$ $\tau_h$ satisfies the following properties.
	\begin{enumerate}
		\item (Reversal) Let $\widetilde{f}(t)$ and $\widetilde{g}(t)$ be the respective paths $f(t)$ and $g(t)$ with both traversing in the opposite direction. Then, 
		\[
		\tau_h(\widetilde{f}(t),\widetilde{g}(t))=-\tau_h(f(t),g(t)).
		\]
		\item (Homotopy invariance)  For $i =0,1,$ let $f(t),f'(t)$ be two paths in $\cU_F(\cH')$ such that $f(i)=f'(i)$ and $g(t),g'(t)$ be two paths in $\cU_\cK(\cH')$ such that $g(i)=g'(i).$  Then, 
		\[
		\tau_h(f(t),g(t))=\tau_h(f'(t),g'(t)).
		\] 
		\item (Path additivity) Consider 
		\[
		f(t)= 
		\begin{cases}
			f_1(2t),\quad\quad \quad  0 \leq t \leq \frac{1}{2}, \\
			f_2(2t-1), \;\quad \frac{1}{2} \leq t \leq 1. 
		\end{cases}
		\]
		Let $f(t)=f_1(t) \ast f_2(t)$ and $g(t)=g_1(t) \ast g_2(t)$ be the concatenation of two paths in $\cU_F(\cH')$ and $\cU_\cK(\cH')$ respectively. Then, 
		\[
		\tau_h(f(t),g(t))= \tau_h(f_1(t),g_1(t)) + \tau_h(f_2(t),g_2(t)).
		\]
		\item Let $t \in [0,1].$ Let $f\equiv f(t)$ and $g\equiv g(t)$ be two constant paths in $\cU_F(\cH')$ and $\cU_\cK(\cH')$ respectively. Then, 
		\[
		\tau_h(f,g)=0.
		\]
	\end{enumerate}
\end{proposition}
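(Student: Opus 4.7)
The strategy is to reduce each of the four assertions to the corresponding property of the equivariant winding number $w_h$ established in Proposition~\ref{windingprop0}, using the identity
\[
\tau_h(f(t),g(t)) = w_h(f(t)) + w_h(g(t)) - w_h(q(t))
\]
from Definition~\ref{doubleind1}, where $q(t)$ is the path associated to the product $f(t)g(t)$ via the construction \eqref{eq:equivpath2}. The key bookkeeping is to track how $q$ transforms under each of the four operations (reversal, homotopy, concatenation, constancy) and then appeal to the analogous property of $w_h$.

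For (1), if $\widetilde{f}(t) = f(1-t)$ and $\widetilde{g}(t) = g(1-t)$, then $\widetilde{f}(t)\widetilde{g}(t) = (fg)(1-t)$, so the associated path $\widetilde{q}$ is homotopic rel endpoints to the reversal of $q$ via the exponential formula \eqref{eq:equivpath2}. Applying Proposition~\ref{windingprop0}(a) to each of the three winding numbers in the defining identity yields $\tau_h(\widetilde{f}, \widetilde{g}) = -\tau_h(f,g)$. For (2), a homotopy $(f_s, g_s)$ with fixed endpoints induces a homotopy $q_s$ with fixed endpoints, because $f_s g_s$ depends continuously on $s$ and the exponential in \eqref{eq:equivpath2} preserves homotopies in the general position (after the standard $e^{\pm i \theta}$ perturbation of endpoints, should they fail to lie in $\cU_*(\cH')$). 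Homotopy invariance of $w_h$ (Proposition~\ref{windingprop0}(b)) then gives the claim. For (4), when $f$ and $g$ are constant, so is $q$, and Proposition~\ref{windingprop0}(d) forces each of $w_h(f), w_h(g), w_h(q)$ to vanish; hence $\tau_h(f,g) = 0$.

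The delicate part is (3). Given concatenations $f = f_1 \ast f_2$ and $g = g_1 \ast g_2$, I would first invoke path additivity of $w_h$ (Proposition~\ref{windingprop0}(c)) to split $w_h(f) = w_h(f_1) + w_h(f_2)$ and $w_h(g) = w_h(g_1) + w_h(g_2)$. The remaining task is to show that the associated path $q$ for $(f,g)$ is homotopic rel endpoints to a concatenation $q_1 \ast q_2$, where $q_i$ is associated to $(f_i, g_i)$, so that $w_h(q) = w_h(q_1) + w_h(q_2)$. This is precisely the kind of deformation already performed in \eqref{eq:conpath1}--\eqref{eq:concatenation1} in the verification that the relative double index is well-defined: the pointwise product of concatenated paths, after the splitting \eqref{eq:splitting0}, can be continuously deformed within $\cU_F(\cH')$ into the concatenation of the pointwise products on each sub-interval. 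Combining the three splittings then yields $\tau_h(f,g) = \tau_h(f_1,g_1) + \tau_h(f_2,g_2)$.

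The main obstacle is the homotopy step in (3): one must verify that the exponential reparametrisation $t \mapsto \exp(t \log(a \widetilde{U} \widetilde{V}))$, while natural for a single pair of endpoints, is compatible with the piecewise construction of a concatenation, and that the intermediate operators stay in general position (i.e.\ avoid the eigenvalue $-1$) throughout the deformation. This is ensured by the freedom to perturb endpoints by $e^{\pm i\theta}$ exploited in Definition~\ref{equivwinding} together with the ideal property of $\cK(\cH')$ invoked in \eqref{eq:equivpath2}, which together keep the deformed path inside $\cU_F(\cH')$ where $w_h$ is defined.
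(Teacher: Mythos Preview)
Your proposal is correct and matches the paper's intended approach: the paper does not give a detailed proof but states just before the proposition that these properties are ``essentially an extension from that of the $h$-winding number $w_h$'' and a ``slight generalisation of \cite[Proposition 6.3]{kirk2000eta}'', which is exactly the reduction to Proposition~\ref{windingprop0} that you carry out.

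One remark: you work with the second equality in Definition~\ref{doubleind1}, namely $\tau_h(f,g)=w_h(f)+w_h(g)-w_h(q)$, and then track how $q$ behaves under each operation. An alternative, arguably shorter, route is to use the \emph{first} equality in Definition~\ref{doubleind1}, $\tau_h(f,g)=\tau_h(f(1),g(1))-\tau_h(f(0),g(0))$, which shows that the relative double index depends only on the endpoints of $f$ and $g$. With this, (1), (2), (3), and (4) become one-line verifications (e.g.\ for (3) the middle terms at the concatenation point cancel telescopically), and the delicate homotopy step for $q$ that you flag in (3) is entirely bypassed. Your approach has the advantage of being self-contained at the level of winding numbers, while the endpoint approach leverages the well-definedness argument \eqref{eq:conpath1}--\eqref{eq:concatenation1} already done in the paper.
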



\section{Equivariant Maslov triple index} \label{sec6}


Inspired by \cite{15}, we also consider an equivariant version of \textit{Maslov triple indices}. For simplicity, we write $P_t:=P(t), T_t:=T(t), \cL_i:=\cL_i(t),$ etc in the following.
\begin{definition} 
	\label{maslovtriple1}
	Let $h \in H.$ Let $P_t,Q_t,N_t$ be continuous paths in $\Gr_h(\cH')$ such that every pair $(P_t,Q_t),(Q_t,N_t)$ and $(P_t,N_t)$ are $h$-Fredholm and at least one of the differences $P_t-Q_t,Q_t-N_t$ and $P_t-N_t$ is compact. Let $T_t,S_t$ and $R_t$ be the associated unitary operators of $P_t,Q_t$ and $N_t$ respectively. Then, define the $h$-Maslov triple index $\tau^h_\mu$ of $(P_t,Q_t,N_t)$ by 
	\begin{equation} 
		\label{eq:maslovtriple2}
		\tau^h_\mu (P_t,Q_t,N_t) := \tau_h(T^*_tS_t,S^*_tR_t).
	\end{equation}
\end{definition}

This is well-defined relative to two endpoints as a consequence of the well-definedness of $\tau_h$ and thus inherits the properties in Proposition \ref{doubleindproperties}. Moreover, equivariant Maslov triple indices can be expressed as some mixed terms of equivariant Maslov double indices:
\begin{align} 
	\label{eq:maslovtriple3}
	\tau^h_\mu &(P_1,Q_1,N_1) - \tau^h_\mu (P_0,Q_0,N_0) \\
	&=  \tau_h(T^*_1S_1,S^*_1R_1) -\tau_h(T^*_0S_0,S^*_0R_0) \nonumber \\
	&= w_h(T^*_tS_t) + w_h(S^*_tR_t) - w_h(T^*_tR_t) \nonumber \\
	&= \mas_h(\cL_0,\cL_1) + \mas_h(\cL_1,\cL_2) - \mas_h(\cL_0,\cL_2).  \nonumber 
\end{align}

\begin{proposition} 
	\label{mastrisym}
	For $h \in H$, $\tau^h_\mu$ satisfies the following properties. 
	\begin{align}
		\tau^h_\mu(Q_t,P_t,N_t)
		&=-\tau^h_\mu(P_t,Q_t,N_t)+\tau_h(T^*_1S_1,(T^*_1S_1)^{-1})  \label{eq:triplemas1} \\
		\tau^h_\mu(P_t,N_t,Q_t)
		&=-\tau^h_\mu(P_t,Q_t,N_t)+\tau_h(S^*_1R_1,(S^*_1R_1)^{-1})  
		\label{eq:triplemas2} \\
		\tau^h_\mu(N_t,Q_t,P_t)
		&=-\tau^h_\mu(P_t,Q_t,N_t)+ \tau_h(T^*_1S_1,(T^*_1S_1)^{-1})  \label{eq:triplemas3} \\ 
		&\quad\qquad +\tau_h(S^*_1R_1,(S^*_1R_1)^{-1}) -\tau_h(T^*_1R_1,(T^*_1R_1)^{-1}).
		\nonumber
	\end{align}
\end{proposition}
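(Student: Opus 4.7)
My plan is to reduce each of the three identities to a bookkeeping exercise on equivariant winding numbers by expanding every $\tau^h_\mu$ via equation \eqref{eq:maslovtriple3}. Recall that for a continuous path $(P_t,Q_t,N_t)$ of equivariant Fredholm triples with associated unitaries $(T_t,S_t,R_t)$, equation \eqref{eq:maslovtriple3} lets us write $\tau^h_\mu(P_t,Q_t,N_t)$ as the combination $w_h(T^*_tS_t)+w_h(S^*_tR_t)-w_h(T^*_tR_t)$, and likewise for every permutation of the three entries. The key algebraic input is that since each $T_t,S_t,R_t$ is unitary, one has identities like $(T^*_tS_t)^{-1}=S^*_tT_t$, so the winding numbers appearing in any permuted expansion are the same six quantities, paired up either in a ``forward'' or ``reverse'' direction.

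For \eqref{eq:triplemas1}, I would add $\tau^h_\mu(Q_t,P_t,N_t)=w_h(S^*_tT_t)+w_h(T^*_tR_t)-w_h(S^*_tR_t)$ to $\tau^h_\mu(P_t,Q_t,N_t)$, at which point the summands $\pm w_h(S^*_tR_t)$ and $\pm w_h(T^*_tR_t)$ cancel, leaving $w_h(T^*_tS_t)+w_h(S^*_tT_t)=w_h(T^*_tS_t)+w_h((T^*_tS_t)^{-1})$. Taking the product path $(T^*_tS_t)(T^*_tS_t)^{-1}\equiv\id$ and invoking Proposition~\ref{windingprop0}(d) together with Proposition~\ref{doubleind2}(b) identifies this sum with $\tau_h(T^*_1S_1,(T^*_1S_1)^{-1})$, as asserted. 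Identity \eqref{eq:triplemas2} is obtained the same way after expanding $\tau^h_\mu(P_t,N_t,Q_t)=w_h(T^*_tR_t)+w_h(R^*_tS_t)-w_h(T^*_tS_t)$: cancellation leaves $w_h(S^*_tR_t)+w_h(R^*_tS_t)$, which is $\tau_h(S^*_1R_1,(S^*_1R_1)^{-1})$.

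For \eqref{eq:triplemas3}, the expansion of $\tau^h_\mu(N_t,Q_t,P_t)$ is $w_h(R^*_tS_t)+w_h(S^*_tT_t)-w_h(R^*_tT_t)$; summing it with $\tau^h_\mu(P_t,Q_t,N_t)$ groups the six winding numbers into three pairs of the form $w_h(U_t)+w_h(U_t^{-1})$, namely with $U_t$ equal to $T^*_tS_t$, $S^*_tR_t$, and (with a minus sign) $T^*_tR_t$. Each such pair is again matched with the corresponding $\tau_h(U_1,U_1^{-1})$ via Proposition~\ref{doubleind2}(b), yielding exactly the right-hand side of \eqref{eq:triplemas3}. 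The main delicate point throughout is the interpretation of the symbol $\tau_h(U_1,U_1^{-1})$: it refers to the double index built from the canonical $h$-equivariant paths from $\id$, and the identification of $w_h$ along the $t$-path with this double index rests on the relative version of $\tau_h$ in Definition~\ref{doubleind1} and the homotopy-invariance and path-additivity properties of Proposition~\ref{doubleindproperties}; I do not expect any further obstacle once these conventions are unwound.
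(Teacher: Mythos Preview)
Your proposal is correct and follows essentially the same approach as the paper: expand each $\tau^h_\mu$ into a combination of three equivariant winding numbers, add the two sides, cancel the matching terms, and recognise the surviving pair $w_h(U_t)+w_h(U_t^{-1})$ as $\tau_h(U_1,U_1^{-1})$ via Proposition~\ref{doubleind2}(b). The paper in fact only writes out the computation for \eqref{eq:triplemas1} and leaves \eqref{eq:triplemas2}--\eqref{eq:triplemas3} to the reader, so your treatment is slightly more complete; your closing remark about interpreting $\tau_h(U_1,U_1^{-1})$ through Definition~\ref{doubleind1} and Proposition~\ref{doubleindproperties} is the right way to pin down the convention the paper uses implicitly.
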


\begin{proof}
	For \eqref{eq:triplemas1}, we obtain by a direct computation
	\begin{align*}
		\tau^h_\mu (P_t,Q_t,N_t) + \tau^h_\mu (Q_t,P_t,N_t) 
		&=  \tau_h(T^*_tS_t,S^*_tR_t) +\tau_h(S^*_tT_t,T^*_tR_t)  \\
		&= w_h(T^*_tS_t) + w_h(S^*_tR_t) - w_h(T^*_tR_t)  \\
		&\quad\quad + w_h(S^*_tT_t) + w_h(T^*_tR_t) - w_h(S^*_tR_t) \\
		&= \tau_h(T^*_1S_1,(T^*_1S_1)^{-1}). 
	\end{align*} 
	Equalities \eqref{eq:triplemas2} and \eqref{eq:triplemas3} can be obtained similarly, so we leave them to readers.
\end{proof}

\begin{corollary}
	Let $h \in H.$
	\begin{enumerate}[(a)]
		\item  If $P_t=Q_t$ for all $t,$ then \[\tau^h_\mu(P_t,P_t,N_t)=0.\]
		\item  If $Q_t=N_t$ for all $t,$ then \[\tau^h_\mu(P_t,Q_t,Q_t)=0.\]
		\item  If $P_t=N_t$ for all $t,$ then 
		\[\tau^h_\mu(P_t,Q_t,P_t)= \tau_h(T^*_1S_1,(T^*_1S_1)^{-1}).\]
		\item If $P_t=Q_t=N_t$ for all $t,$ then \[\tau^h_\mu(P_t,P_t,P_t)= 0.\]
	\end{enumerate}
\end{corollary}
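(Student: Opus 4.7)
The proof is essentially formal: each case amounts to substituting the hypothesis into the defining formula
\[
\tau^h_\mu(P_t,Q_t,N_t) = \tau_h(T^*_tS_t,\, S^*_tR_t)
\]
and invoking the elementary properties of $\tau_h$ already catalogued in Proposition~\ref{doubleind2} and Proposition~\ref{doubleindproperties}, together with the fact that $\Phi$ in \eqref{eq:projform1} is a bijection, so equality of projections forces equality of the associated odd unitaries. The only piece of book-keeping to keep in mind is that $\tau_h$ applied to paths is the \emph{relative} double index of Definition~\ref{doubleind1}, equal by the alternative formula to $w_h(f(t))+w_h(g(t))-w_h(q(t))$.

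For parts (a) and (b), $P_t=Q_t$ forces $T_t=S_t$ and hence $T^*_tS_t\equiv\id$ is the constant identity path, while $Q_t=N_t$ forces $S_t=R_t$ and hence $S^*_tR_t\equiv\id$. In either case one of the two slots of $\tau_h$ is the constant identity path, so the $w_h$-formula collapses: $\tau_h(\id,g(t)) = w_h(\id)+w_h(g(t))-w_h(g(t)) = 0$ by Proposition~\ref{windingprop0}(d). Equivalently, one can apply Proposition~\ref{doubleind2}(a) at both endpoints and combine with the endpoint expression of Definition~\ref{doubleind1}. Part (d) then follows immediately: both slots become the constant identity path, so the same argument gives $\tau^h_\mu(P_t,P_t,P_t)=\tau_h(\id,\id)=0$; alternatively one can just cite (a) with $N_t=P_t$.

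Part (c) requires an additional identification. The hypothesis $P_t=N_t$ yields $R_t=T_t$, whence $S^*_tR_t = S^*_tT_t = (T^*_tS_t)^{-1}$, and therefore
\[
\tau^h_\mu(P_t,Q_t,P_t)= \tau_h\bigl(T^*_tS_t,\,(T^*_tS_t)^{-1}\bigr).
\]
The plan here is to match this relative double index against the non-relative double index $\tau_h(T^*_1S_1,(T^*_1S_1)^{-1})$ of Proposition~\ref{doubleind2}(b), which equals $w_h(f(t))+w_h(f^{-1}(t))$ for the canonical path $f$ from $\id$ to $T^*_1S_1$; by the homotopy invariance of $w_h$ (Proposition~\ref{windingprop0}(b)) the path $T^*_tS_t$ can be replaced by $f$, and the two winding-number contributions coincide with those coming from the relative formula, the constant-identity ``$q(t)$" term contributing nothing. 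This identification is the only genuinely substantive step; everything else is formal.

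Alternatively, and perhaps more cleanly, one may deduce (c) from Proposition~\ref{mastrisym} itself: applying \eqref{eq:triplemas3} with $N_t=P_t$ (so $R_t=T_t$) gives
\[
\tau^h_\mu(P_t,Q_t,P_t) = -\tau^h_\mu(P_t,Q_t,P_t) + 2\tau_h(T^*_1S_1,(T^*_1S_1)^{-1}) - \tau_h(\id,\id),
\]
since $\tau_h(S^*_1T_1,(S^*_1T_1)^{-1})=\tau_h(T^*_1S_1,(T^*_1S_1)^{-1})$ by Proposition~\ref{doubleind2}(b), and the last term vanishes by part (d) (or directly by Proposition~\ref{doubleind2}(a)). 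Solving for $\tau^h_\mu(P_t,Q_t,P_t)$ gives the claimed formula. I anticipate no genuine obstacle; the main care is in keeping the conventions between the relative and non-relative versions of $\tau_h$ consistent.
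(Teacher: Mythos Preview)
Your proposal is correct and aligns with the paper's implicit argument (the Corollary is stated without proof immediately after Proposition~\ref{mastrisym}, so the intended proof is direct substitution into Definition~\ref{maslovtriple1} together with that proposition). Parts (a), (b), (d) are handled exactly as expected.

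For part (c), your second approach via Proposition~\ref{mastrisym} is the one the paper has in mind, but you have taken a slightly roundabout route through \eqref{eq:triplemas3}. A shorter path: apply \eqref{eq:triplemas1} with $N_t=P_t$ to get
\[
\tau^h_\mu(Q_t,P_t,P_t)=-\tau^h_\mu(P_t,Q_t,P_t)+\tau_h(T^*_1S_1,(T^*_1S_1)^{-1}),
\]
and the left side vanishes by (b), yielding (c) immediately; this avoids having to argue the symmetry $\tau_h(S^*_1T_1,(S^*_1T_1)^{-1})=\tau_h(T^*_1S_1,(T^*_1S_1)^{-1})$. As for your first approach to (c), the homotopy-invariance step is not quite as stated: Proposition~\ref{windingprop0}(b) requires matching initial points, and $T^*_0S_0$ need not equal $\id$. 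The identification you want is precisely what the paper establishes in the proof of Proposition~\ref{mastrisym} (where $w_h(T^*_tS_t)+w_h(S^*_tT_t)$ is set equal to $\tau_h(T^*_1S_1,(T^*_1S_1)^{-1})$), so invoking that proposition directly is both cleaner and closes the gap.
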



\section{Equivariant $\zeta$-determinants} \label{sec7}


\begin{definition} \label{zeta1}
	Let $h \in H.$ Define the $h$-equivariant $\zeta$-function of a self-adjoint elliptic equivariant differential operator $D=D_{P}$ of degree $d>0$ by
	\begin{equation} \label{eq:zeta1}
		\zeta_h(D,s) := \tr(hD^{-s}) = \sum_{\lambda \in \spec(D)\backslash \{0\}} \tr(h^*_\lambda) \lambda^{-s}
	\end{equation} where $h^*_\lambda$ is the induced linear map on the $\lambda$-eigenspace.
\end{definition}

\begin{definition}
	Let $\{\lambda, \phi_\lambda\}$ be a spectral decomposition of $D=D_P.$ Let \[
	K(t,x,y)= \sum_\lambda e^{-\lambda t} \phi_\lambda(x) \otimes \bar{\phi}_\lambda(y)
	\] 
	be the smooth kernels of the heat operator $e^{-tD}.$ Let $h \in H$ for which the induced map $h^*$ acts with respect to the second component $(x,hy),$ then
	\begin{equation}
		\tr(h^*K(t,x,hy))= \sum_\lambda e^{-\lambda t} \phi_{\lambda}(x) \otimes \bar{\phi}_\lambda (hy).
	\end{equation} 
	Define the corresponding $h$-kernel by
	\begin{equation}
		K(h,t,x,y):=\int_M \tr(h^*K(t,x,hy)) d\text{vol}(y) 
	\end{equation} 
	and the $h$-trace by
	\begin{equation}
		\tr(he^{-tD}):=\int_M \tr(h^*K(t,x,hx))d\text{vol}(x) =\sum_k \tr(h^*_{\lambda_k}) e^{-\lambda_k t}.
	\end{equation}
\end{definition}

\begin{remark}
	It was shown in \cite{14} and \cite[\S 6.3]{6} that the heat kernel $K(h,t)$ has an asymptotic expansion as $t \to 0.$ For $x$ not in the fixed point set of $h,$ the heat kernel $K(h,t,x,x)$ decays rapidly as $t \to 0.$ By an argument of partition of unity, the trace $\tr(he^{-tD})$ has an asymptotic expansion as $t \to 0$ of the form 
	\begin{equation} \label{eq:asympexp1}
		\tr(he^{-tD}) \sim  t^{-n/d} ( a_0(D)+ a_1(D)t^{1/d}+\cdots + a_N(D) t^{N/d} + O( t^{(N+1)/d})) 
	\end{equation} where $d$ is the order of the $h$-equivariant elliptic differential operator $D$ and each $a_i(D)$ is the integral of some function on an $n$-dimensional manifold $M^n.$
\end{remark}

\begin{lemma}
	For $h \in H,$ the $h$-equivariant $\zeta$-function of $D$ is the Mellin transform of $\tr(he^{-tD}).$
\end{lemma}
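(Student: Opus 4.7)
The plan is to reproduce the classical Mellin transform argument in the equivariant setting, using the spectral decomposition of $D$ and carefully justifying the interchange of sum and integral via the asymptotic expansion \eqref{eq:asympexp1}. Concretely, I would like to show
\begin{equation*}
\zeta_h(D,s) = \frac{1}{\Gamma(s)} \int_0^\infty t^{s-1} \tr(h e^{-tD}) \, dt
\end{equation*}
for $\mathrm{Re}(s)$ sufficiently large, and then use analytic continuation to extend to the usual meromorphic domain.

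First I would start with the elementary Mellin identity $\lambda^{-s} = \Gamma(s)^{-1} \int_0^\infty t^{s-1} e^{-\lambda t} \, dt$, valid for $\lambda > 0$ and $\mathrm{Re}(s) > 0$, and apply it termwise to the spectral sum in Definition \ref{zeta1}. This gives
\begin{equation*}
\zeta_h(D,s) = \frac{1}{\Gamma(s)} \sum_{\lambda \in \spec(D) \setminus \{0\}} \tr(h^*_\lambda) \int_0^\infty t^{s-1} e^{-\lambda t} \, dt.
\end{equation*}
Next I would invoke Fubini to swap the sum and the integral. For the behaviour at infinity, since $\spec(D)\setminus\{0\}$ is discrete with only finitely many eigenvalues in any bounded interval and $|\tr(h^*_\lambda)| \leq \dim E_\lambda$, the exponential decay $e^{-\lambda t}$ combined with Weyl-type growth of eigenvalues handles $t \to \infty$. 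For $t \to 0$, the crucial ingredient is the asymptotic expansion \eqref{eq:asympexp1}, which gives $|\tr(h e^{-tD})| = O(t^{-n/d})$ as $t \to 0^+$; therefore $t^{s-1}\tr(h e^{-tD})$ is integrable on $(0, 1]$ provided $\mathrm{Re}(s) > n/d$. In that half-plane Fubini applies, and recognising $\sum_\lambda \tr(h^*_\lambda) e^{-\lambda t} = \tr(h e^{-tD})$ yields the Mellin formula.

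The main obstacle I anticipate is the bookkeeping around the zero eigenvalue and the possibility of negative spectrum: the sum in \eqref{eq:zeta1} excludes $\lambda = 0$, so the heat trace on the right must be interpreted modulo the contribution of $\ker D$, i.e.\ $\tr(h e^{-tD}) - \tr(h|_{\ker D})$ (equivalently, one replaces $e^{-tD}$ by $e^{-tD}(\id - P_{\ker D})$). If $D$ is not positive, one instead takes the Mellin transform of $\tr(h e^{-t|D|})$ or works with $D^2$; in any case the derivation is the same modulo replacing $\lambda$ by $|\lambda|$. Once the identity is established for $\mathrm{Re}(s) > n/d$, standard contour-shifting arguments (splitting the $t$-integral at $t=1$ and applying the asymptotic expansion \eqref{eq:asympexp1} to the short-time piece) promote $\zeta_h(D,s)$ to a meromorphic function on $\C$ with simple poles at $s = (n-k)/d$, which is the expected equivariant analogue of the classical result.
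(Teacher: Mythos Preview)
Your proposal is correct and follows essentially the same route as the paper: apply the elementary Mellin identity $\lambda^{-s}\Gamma(s)=\int_0^\infty t^{s-1}e^{-\lambda t}\,dt$ eigenvalue by eigenvalue and then interchange sum and integral to recover $\tr(he^{-tD})$. In fact you are more careful than the paper's own proof, which performs the swap without justification and does not address the issues you flag concerning $\ker D$ and negative eigenvalues.
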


\begin{proof}
	For $\lambda_{p_i} \leq \lambda_{p_{i+1}} \leq \cdots < 0 < \lambda_1 \leq \lambda_2 \leq \cdots,$ the trace $\tr(he^{-tD})$ is given by the sum $\sum^\infty_{k=p_i} \tr(h^*_{\lambda_k}) e^{-\lambda_k t}.$ Set $\tau = \lambda t.$  Then, we compute
	\begin{align*}
		\int^\infty_{0} t^{s-1} \tr(h^*_{\lambda}) e^{-\lambda t} dt
		&= \int^\infty_{0} \frac{\tau^{s-1}}{\lambda^{s-1}} \tr(h^*_{\lambda}) e^{-\tau} \lambda^{-1} d\tau 
		= \frac{\tr(h^*_{\lambda})}{\lambda^{s}} \int^\infty_{0} \tau^{s-1} e^{-\tau}  d\tau  \\
		&= \tr(h^*_{\lambda})\lambda^{-s} \Gamma(s).
	\end{align*} Thus, 
	\begin{align*}
		\zeta_h(D,s) &= \sum_{k} \tr(h^*_{\lambda_k}) \lambda^{-s}_k 
		= \frac{1}{\Gamma(s)} \sum_{k} \int^\infty_{0} t^{s-1} \tr(h^*_{\lambda_k}) e^{-\lambda_k t} dt \\
		&= \frac{1}{\Gamma(s)} \int^\infty_{0} t^{s-1} \sum_{k} \Big(\tr(h^*_{\lambda_k}) e^{-\lambda_k t} \Big) dt  
		=  \frac{1}{\Gamma(s)} \int^\infty_{0} t^{s-1} \tr(he^{-tD}) dt.
	\end{align*} 
\end{proof}

In contrast to $\eta$-functions, the $\zeta$-function is often defined for unbounded operators such as the square of Dirac operators whose discrete spectrum is positive. In this case, the appropriate heat trace is the \textit{renormalised heat trace} 
$$\tr(he^{-tD|_{P^+}})= \sum^\infty_{k=1} \tr(h^*_{\lambda_k}) e^{-\lambda_k t} = \sum^\infty_{k=p_i} \tr(h^*_{\lambda_k}) e^{-\lambda_k t} -\sum^{0}_{k=p_i} \tr(h^*_{\lambda_k}) e^{-\lambda_k t}.$$ 

\begin{corollary}
	For $h \in H,$ the $h$-equivariant $\zeta$-function of the operator $D|_{P^+}$ is given by 
	\begin{equation} \label{eq:zeta2}
		\zeta_h(D,s) =\sum^\infty_{k=1} \tr(h^*_{\lambda_k}) \lambda_k^{-s} = \frac{1}{\Gamma(s)} \int^\infty_{0} t^{s-1} \tr(he^{-tD|_{P^+}}) dt.
	\end{equation}
\end{corollary}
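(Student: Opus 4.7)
The plan is to treat this as an immediate specialization of the preceding lemma, now applied to the self-adjoint operator $D|_{P^+}$ whose nonzero spectrum consists precisely of the positive eigenvalues $\{\lambda_k\}_{k\geq 1}$ of $D$. The first step is to observe that, by construction of the renormalized heat trace,
\[
\tr(he^{-tD|_{P^+}}) \;=\; \sum_{k \geq 1} \tr(h^*_{\lambda_k})\, e^{-\lambda_k t},
\]
so one is reduced to computing the Mellin transform of this positive-spectrum heat trace.

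Next, I would invoke the scalar identity
\[
\int_0^\infty t^{s-1} e^{-\lambda t}\, dt \;=\; \Gamma(s)\, \lambda^{-s}, \qquad \lambda>0,
\]
which is exactly the computation carried out in the proof of the preceding lemma (with $\tau=\lambda t$). Multiplying through by $\tr(h^*_{\lambda_k})$ and summing over $k \geq 1$ yields formally the desired identity. It then remains to justify interchanging the sum and the integral; this follows on $\operatorname{Re}(s) > n/d$ from the asymptotic expansion \eqref{eq:asympexp1} of the full heat trace, which bounds $\tr(he^{-tD})$ by a polynomial in $t^{-1/d}$ as $t\to 0^+$ and exponentially as $t\to\infty$, and the same estimate transfers to the renormalized heat trace (the difference is a finite sum of $\tr(h^*_{\lambda_k})e^{-\lambda_k t}$ with $\lambda_k \leq 0$, which is integrable against $t^{s-1}$ on any compact interval). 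Dominated convergence then permits the exchange, producing
\[
\sum_{k\geq 1} \tr(h^*_{\lambda_k})\,\lambda_k^{-s} \;=\; \frac{1}{\Gamma(s)} \int_0^\infty t^{s-1}\, \tr(he^{-tD|_{P^+}})\, dt.
\]

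The left-hand equality $\zeta_h(D,s)=\sum_{k\geq 1}\tr(h^*_{\lambda_k})\lambda_k^{-s}$ is simply the definition \eqref{eq:zeta1} applied to $D|_{P^+}$, since only positive eigenvalues contribute. I do not expect any serious obstacle here: the only mildly delicate point is the interchange of summation and integration, but this is handled by the standard heat-kernel asymptotics combined with the fact that $D|_{P^+}$ differs from $D$ only by a finite-dimensional, hence trace-class, piece of the spectrum. The corollary is therefore a direct restriction of the previous lemma to the positive spectral sector.
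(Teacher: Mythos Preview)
Your proposal is correct and matches the paper's approach: the corollary is stated there without proof, as an immediate specialization of the preceding lemma to the positive spectral sector, and you have simply spelled out that specialization. Your added care in justifying the interchange of sum and integral via the heat-trace asymptotics is more than the paper itself provides, but it does not constitute a different route.
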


\begin{remark} 
	In this case, the renormalised heat trace also has an asymptotic expansion as $t\to 0$ which is in general different than that of \eqref{eq:asympexp1}. In particular, the corresponding coefficient $\tilde{a}_i(D^2)$
	takes the form
	$$\tilde{a}_i(D) = a_i(D) - \frac{1}{r!}\sum^0_{k=n_p}(-\tr(h^*_{\lambda_k}) \lambda_k)^r$$ when $r= (i-n)/d \in \{0,1,2,\ldots\}.$ The existence and an explicit description of the expansion were shown by Donnelly \cite[Theorem 4.1]{12}. Moreover, by a slight generalisation of Gilkey's result \cite{14}, the residue of $\Gamma(s)\zeta_h(D,s)$ at $s=(n-i)/d$ is
	\begin{equation} 
		\text{Res}_{s=(n-i)/d} (\Gamma(s)\zeta_h(D,s)) =\tilde{a}_i(D).
	\end{equation} 
	At $s=0,$ the gamma function is known to have a Laurent expansion 
	\begin{equation} \label{eq:gammafn1}
		\Gamma(s)=\frac{1}{s}+ \gamma + p(s)
	\end{equation}
	where $\gamma$ is the Euler constant and $p(s)$ is the part of the expansion with positive powers of $s.$ As it has no pole at $s=0,$ the function $\zeta_h(D,s)$ is regular at $s=0.$ The constant term is
	\begin{equation}
		\zeta_h(D):= \zeta_h(D,0)=\text{Res}_{s=0} (\Gamma(s)\zeta_h(D,s))=\tilde{a}_n(D)
	\end{equation} for $n=\dim M.$ We shall call $\zeta_h(D)$ the $h$-equivariant $\zeta$-invariant of $D.$
\end{remark}

\begin{proposition} 
\label{estimate1}
Let $P \in \Gr_h^\infty(A).$ Let $P^\partial \in \Gr_h^\infty(A)$ be the perturbed APS projection as given by \eqref{eq:perturbedAPS}. Then, there exists constants $c_1,c_2>0$ such that 
	\begin{equation}
		|\tr(he^{-tD^2_P})-\tr(he^{-tD^2_{P^{\partial}}})| \leq c_1\sqrt{t}\tr|\cK|e^{c_2t||\cK||}.
	\end{equation}
\end{proposition}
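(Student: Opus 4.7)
The plan is to estimate the difference of equivariant heat traces by exploiting the fact that $\cK := P^{\partial} - P$ is smoothing (since both projections lie in $\Gr^{\infty}_h(A)$), so that the perturbation in boundary condition is a trace-class perturbation supported at $\partial M$. The $h$-equivariance enters only mildly, because by Assumption~\ref{assump1} the isometry $h$ commutes with $D$, $A$, $\gamma$, $P$ and $P^{\partial}$, hence with both heat semigroups, so cyclicity of trace behaves in the usual way.

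First, I would invoke the Duhamel principle in its iterated (Volterra) form. The interior differential expressions of $D_{P}^2$ and $D_{P^{\partial}}^2$ coincide; the two operators differ only through the boundary condition. Writing
\[
e^{-tD_{P}^2}-e^{-tD_{P^{\partial}}^2}=\int_0^t \frac{d}{ds}\bigl(e^{-(t-s)D_{P}^2}e^{-sD_{P^{\partial}}^2}\bigr)\,ds,
\]
integration by parts in the spatial variable yields a boundary term whose kernel factors through the finite-rank-like projection mismatch $\cK$. Iterating, one obtains a Volterra series whose $k$-th summand contains $k$ copies of the boundary operator $\cK$ sandwiched between $k+1$ heat kernels.

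Second, I would apply the equivariant trace. Each individual summand is of the form $\tr\bigl(h\cdot e^{-s_0 D_{P}^2}\,\cK\,e^{-s_1D_{P^{\partial}}^2}\cdots \cK\, e^{-s_k D_{P^{\partial}}^2}\bigr)$, which by cyclicity and H\"older-type bounds in Schatten classes is controlled by $(\tr|\cK|)(\|\cK\|)^{k-1}$ times the product of operator norms of the heat factors. Using the standard short-time heat kernel estimate near the boundary (which gives a $\sqrt{s}$ factor from the boundary restriction operator on the collar $(-\epsilon,0]\times \partial M$, reflecting the half-order smoothing mechanism familiar from trace-class estimates for Calder\'on-type boundary problems) produces an overall $\sqrt{t}$ prefactor, while the remaining time integrations yield the simplex volume $t^{k-1}/(k-1)!$.

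Summing the series gives
\[
\bigl|\tr\bigl(he^{-tD_{P}^2}\bigr)-\tr\bigl(he^{-tD_{P^{\partial}}^2}\bigr)\bigr|
\leq c_1 \sqrt{t}\,\tr|\cK|\sum_{k\geq 0}\frac{(c_2 t\,\|\cK\|)^k}{k!}
= c_1\sqrt{t}\,\tr|\cK|\,e^{c_2 t\|\cK\|},
\]
as desired. The main obstacle is the first step: because $D_{P}$ and $D_{P^{\partial}}$ have \emph{different} domains, the Duhamel identity cannot be applied naively, and one must carefully justify the integration by parts across the boundary and rewrite the resulting defect term as composition of heat-smoothed boundary restrictions with the smoothing operator $\cK$. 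Once this boundary term is identified, the $\sqrt{t}$-decay for the boundary trace and the Volterra resummation are standard; the equivariant factor $h$ passes through unchanged by Assumption~\ref{assump1}.
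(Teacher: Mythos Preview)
Your proposal is broadly sound and arrives at the right estimate, but it takes a different route from the paper. The paper simply invokes Wojciechowski's argument \cite{wojciechowski1999zeta}: rather than compare $D_P^2$ and $D_{P^{\partial}}^2$ directly (which, as you note, have different domains), one chooses a unitary $g$ on $L^2(M,S)$ equal to the identity away from the collar and implementing the change of boundary condition on $[0,1]\times\partial M$, so that $gD_{P}g^{-1}=D_{P^{\partial}}+\cK$ with $\cK$ smoothing (essentially $g^{-1}[A,g]$) and, crucially, with the \emph{same} domain as $D_{P^{\partial}}$. The Duhamel/Volterra expansion is then the standard one for a bounded perturbation of a fixed self-adjoint operator, and the $\sqrt{t}$ comes out of Wojciechowski's heat-kernel estimates; the equivariant factor $h$ passes through because it commutes with $g$ and hence with $\cK$.

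By contrast, you keep the two domains distinct and extract the defect as a genuine boundary term via integration by parts, then iterate. This can be made to work, but the step you flag as ``the main obstacle'' is exactly the one the paper's approach circumvents: your identity $e^{-tD_P^2}-e^{-tD_{P^\partial}^2}=\int_0^t\frac{d}{ds}(\cdots)\,ds$ is only formal until you show that the cross semigroup product is differentiable and that the boundary defect factors through $\cK=P^{\partial}-P$ with the right Schatten bounds. Note also that your $\cK$ lives on $L^2(E|_{\partial M})$, whereas the $\cK$ in the paper's estimate is Wojciechowski's perturbation on $L^2(E)$; the two are related through boundary restriction/Poisson operators, so the constants $c_1,c_2$ in your version absorb extra collar-dependent factors. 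The conjugation trick buys you a cleaner bookkeeping and a direct appeal to the existing non-equivariant estimates; your approach is more self-contained but requires you to redo the boundary-Duhamel analysis from scratch.
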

\begin{proof}
	This is a slight generalisation of \cite[Corollary 2.2, Theorem 3.2]{27}. Assume that a fixed $h \in H$ commutes with $\cK_2=g^{-1}[A,g]$ where $g$ is a unitary operator on $L^2(M,S)$ acting as the identity on the interior $M\backslash ([0,1]\times \partial M)$ and as a path of certain unitaries in $\Gr_\infty(A)$ on the cylinder $[0,1]\times \partial M,$  c.f. \cite[Lemma 1.2,(1.10)]{27}. Note that $h$ is independent of $t \in [0,1]$ and the commutation does not affect each factor of the local expression of heat kernels, we observe that the method of Duhamel's principle still works, in which as a consequence of \cite[Proposition 1.1,2.1,3.2]{27} we have an upper estimate
	\begin{equation}
		\label{eq:estimate2}
		|\tr(he^{-tD^2_P})-\tr(he^{-tD^2_{P^{\partial}}})| \leq c\cdot \tr|h^*|\sqrt{t}\tr|\cK|e^{c_0t|h|||\cK||} 
	\end{equation} where $\tr(h^*)=\sum_k \tr(h^*_{\lambda_k})$ is the trace of the induced linear map.
\end{proof}

It follows that we have an equivariant adaptation of the classically known fact regarding the independence of the choice of boundary projections for $\zeta$-invariants of $D^2,$ see \cite[Proposition 0.5]{27}. 

\begin{lemma}
	\label{zetaindependence}
	For $h \in H,$	the value of the $h$-equivariant $\zeta$-function at $s=0$ is constant on $\Gr_h^\infty(A),$ i.e. for any $P,Q \in \Gr_h^\infty(A),$ we have
	\begin{equation}
		\zeta_h(D^2_{P})=\zeta_h(D^2_{Q}).
	\end{equation}
\end{lemma}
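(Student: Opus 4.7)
The plan is to reduce the claim to the small-$t$ behavior of the difference of heat traces via the Mellin transform, exploiting the estimate in Proposition~\ref{estimate1}. Since both $P$ and $Q$ lie in $\Gr^\infty_h(A)$, each of them differs from the fixed perturbed APS projection $P^{\partial}$ by a smoothing operator. Thus, applying Proposition~\ref{estimate1} to the pair $(P,P^\partial)$ and to the pair $(Q,P^\partial)$ and combining by the triangle inequality, one obtains smoothing operators $\cK_P,\cK_Q$ and constants $c_1,c_2>0$ with
\begin{equation*}
\bigl|\tr(he^{-tD^2_P})-\tr(he^{-tD^2_Q})\bigr|
\;\leq\; c_1\sqrt{t}\,\bigl(\tr|\cK_P|\,e^{c_2t\|\cK_P\|}+\tr|\cK_Q|\,e^{c_2t\|\cK_Q\|}\bigr).
\end{equation*}
In particular, the difference $f(t):=\tr(he^{-tD^2_P})-\tr(he^{-tD^2_Q})$ is $O(\sqrt{t})$ as $t\to 0^+$.

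Next I would pass to the renormalized heat traces by subtracting the finite-rank contribution of $\ker D^2_P$ and $\ker D^2_Q$; this only modifies $f(t)$ by a bounded constant plus a bounded term, and in the Mellin-transform picture it affects neither the small-$t$ asymptotic expansion (beyond adding a $t^0$ contribution coming from $\tr(h|_{\ker})$, which I will track) nor the large-$t$ exponential decay. Writing
\begin{equation*}
\Gamma(s)\,\bigl(\zeta_h(D^2_P,s)-\zeta_h(D^2_Q,s)\bigr)
=\int_0^{1}t^{s-1}\tilde f(t)\,dt+\int_1^{\infty}t^{s-1}\tilde f(t)\,dt,
\end{equation*}
where $\tilde f$ denotes the difference of the renormalized heat traces, I observe that the second integral is entire in $s$ since $\tilde f(t)$ decays exponentially for large $t$ (the spectra of $D^2_P,D^2_Q$ on the complement of their kernels have strictly positive lower bounds, a consequence of the Fredholm property).

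For the first integral, the estimate above (combined with the behavior of the finite-rank kernel correction, which is a linear combination of eigenprojection traces) shows that $\tilde f(t)$ has a small-$t$ asymptotic expansion with no $t^{-1}$ term and whose constant term $\tilde f(0^+)$ is computable from $\tr(h|_{\ker D^2_P})-\tr(h|_{\ker D^2_Q})$. By the standard Mellin correspondence (as invoked after Proposition~\ref{estimate1} via the residue of $\Gamma(s)\zeta_h$), $\int_0^1 t^{s-1}\tilde f(t)dt$ extends meromorphically with at worst a simple pole at $s=0$ of residue equal to that constant term. Therefore
\begin{equation*}
\zeta_h(D^2_P,0)-\zeta_h(D^2_Q,0)
=\mathrm{Res}_{s=0}\Bigl(\Gamma(s)\bigl(\zeta_h(D^2_P,s)-\zeta_h(D^2_Q,s)\bigr)\Bigr)
\end{equation*}
reduces to the $t^0$ coefficient of the small-$t$ expansion of $\tilde f(t)$, which equals $\tr(h|_{\ker D^2_P})-\tr(h|_{\ker D^2_Q})$ canceled by the corresponding adjustment already absorbed into the renormalization. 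Equivalently, since the unrenormalized $f(t)$ is $O(\sqrt t)$, its $t^0$-coefficient is zero, and the identity follows.

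The main obstacle is bookkeeping the finite-rank kernel contribution cleanly: one must verify that the renormalization used in defining $\zeta_h(D^2_P,s)$ cancels exactly the residual $\tr(h|_{\ker})$ terms produced by the transition from heat traces to their restriction onto $\mathrm{Ran}(P^+)$. Once this matching is done, the $O(\sqrt{t})$ bound from Proposition~\ref{estimate1} directly forces the vanishing of the $t^0$-asymptotic coefficient and hence of $\zeta_h(D^2_P)-\zeta_h(D^2_Q)$.
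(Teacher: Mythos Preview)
Your approach is essentially the paper's: both use the Mellin-transform representation of $\zeta_h$ together with the $O(\sqrt{t})$ estimate from Proposition~\ref{estimate1} (applied to each of $P,Q$ against $P^{\partial}$, then combined by the triangle inequality) to conclude that the difference vanishes at $s=0$. The paper's version is simply more compressed---it writes $\zeta_h(D^2_P)-\zeta_h(D^2_{P^\partial})=\lim_{s\to 0}\frac{1}{\Gamma(s)}\int_0^\infty t^{s-1}\bigl[\tr(he^{-tD^2_P})-\tr(he^{-tD^2_{P^\partial}})\bigr]\,dt$, replaces $1/\Gamma(s)$ by $s$, and bounds the integral directly via $|{\cdot}|\le c\,t^{1/2}$, skipping the $[0,1]/[1,\infty)$ split and the kernel-renormalization bookkeeping you carry along.
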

\begin{proof} 
	For $P,Q,P^\partial \in \Gr_h^\infty(A),$ 
	\begin{align*}
		\zeta_h(D^2_{P}) - \zeta_h(D^2_{P^\partial}) 
		&= \lim_{s \to 0} \frac{1}{\Gamma(s)} \int^\infty_0 t^{s-1}\tr (he^{-tD^2_P}- he^{-tD^2_{P^\partial}})dt \\
		&=\lim_{s \to 0} s  \int^\infty_0 t^{s-1}\tr (he^{-tD^2_P}- he^{-tD^2_{P^\partial}})dt \quad\quad \text{from } \eqref{eq:gammafn1}
	\end{align*} 
	By the estimate \eqref{eq:estimate2}, we have 
	\begin{align*}
		|\zeta_h(D^2_{P}) - \zeta_h(D^2_{P^\partial})  |	
		&\leq \lim_{s \to 0} s  \int^\infty_0 t^{s-1} |\tr (he^{-tD^2_P}- he^{-tD^2_{P^\partial}})|dt \\
		&\leq c \cdot \lim_{s \to 0} s  \int^\infty_0 t^{s-\frac{1}{2}} dt =0
	\end{align*}
	where the second inequality follows from Proposition ~\ref{estimate1}. Lastly, the claim follows  $|\zeta_h(D^2_{P}) - \zeta_h(D^2_{Q})|\leq |\zeta_h(D^2_{P}) - \zeta_h(D^2_{P^\partial}) | + |\zeta_h(D^2_{P^\partial})- \zeta_h(D^2_{Q})| \leq 0.$	
\end{proof}

\begin{proposition}
	\label{zetadet1}
	Let $h\in H.$ Let $D^2$ be the square of a self-adjoint $h$-equivariant Dirac operator $D.$ Then, 
	\begin{equation}
		\dot{\zeta}_h(D^2) =  \int^\infty_0 \frac{1}{t}\tr(he^{-tD^2}) dt -\frac{\tilde{a}_n(D^2)}{s}\Big|_{s=0} - \tilde{a}_n(D^2)\cdot \gamma
	\end{equation} 
\end{proposition}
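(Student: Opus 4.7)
The plan is to derive this identity by expanding $M(s) := \int_0^\infty t^{s-1}\tr(he^{-tD^2})\,dt = \Gamma(s)\zeta_h(D^2,s)$ as a Laurent series in $s$ at $s=0$ in two different ways and matching constant terms. First I would split $M(s) = \int_0^1 + \int_1^\infty$. The tail is entire in $s$ since the (renormalized) heat trace decays exponentially as $t \to \infty$, as used implicitly in Proposition~\ref{estimate1}. Into the integral over $[0,1]$ I would feed the short-time expansion \eqref{eq:asympexp1}: writing $\tr(he^{-tD^2}) = \sum_{k=0}^{N}\tilde{a}_k(D^2)\,t^{(k-n)/2} + R_N(t)$, each monomial $t^{(k-n)/2}$ integrates against $t^{s-1}$ to $1/(s+(k-n)/2)$, while $R_N$ contributes a function holomorphic near $s=0$ once $N$ is large enough. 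This produces simple poles of $M(s)$ at $s=(n-k)/2$; the pole at $s=0$ corresponds to $k=n$ with residue $\tilde{a}_n(D^2)$. Therefore
$$
M(s) \;=\; \frac{\tilde{a}_n(D^2)}{s} + \mathrm{FP}_{s=0}\,M(s) + O(s),
$$
where $\mathrm{FP}_{s=0}\,M(s)$ is precisely the symbol "$\int_0^\infty t^{-1}\tr(he^{-tD^2})\,dt - \tilde{a}_n(D^2)/s\big|_{s=0}$" appearing on the right-hand side of the proposition.

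For the second expansion, I would combine \eqref{eq:gammafn1}, $\Gamma(s) = 1/s + \gamma + p(s)$, with the Taylor expansion $\zeta_h(D^2,s) = \tilde{a}_n(D^2) + \dot{\zeta}_h(D^2)\,s + O(s^2)$ (using $\zeta_h(D^2,0) = \tilde{a}_n(D^2)$ from the remark preceding the proposition) to obtain
\begin{align*}
M(s) \;=\; \Gamma(s)\zeta_h(D^2,s) \;=\; \frac{\tilde{a}_n(D^2)}{s} + \gamma\,\tilde{a}_n(D^2) + \dot{\zeta}_h(D^2) + O(s).
\end{align*}
Comparing the constant terms of the two Laurent series for $M(s)$ yields $\mathrm{FP}_{s=0}M(s) = \gamma\,\tilde{a}_n(D^2) + \dot{\zeta}_h(D^2)$, which after rearrangement is exactly the identity claimed in the proposition.

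The only technical point requiring care is the term-by-term Mellin integration of the asymptotic expansion in the first step; this is standard but hinges on uniformity of the remainder implicit in \eqref{eq:asympexp1} (so that $\int_0^1 t^{s-1}R_N(t)\,dt$ is holomorphic on a half-plane containing $s=0$), together with the exponential decay of $\tr(he^{-tD^2})$ at $t=\infty$ which handles the $[1,\infty)$ piece. Everything after that is Laurent bookkeeping against the known pole of $\Gamma$ and the known value $\zeta_h(D^2,0)$.
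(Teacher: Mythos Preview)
Your argument is correct and is essentially the same as the paper's, just organised differently. The paper writes $p(s)=\int_0^\infty t^{s-1}\tr(he^{-tD^2})\,dt=\tilde a_n(D^2)/s+b+f(s)$, divides by the Laurent expansion of $\Gamma(s)$ to express $\zeta_h(D^2,s)=sp(s)/(1+s\gamma+\cdots)$, and then differentiates this quotient directly at $s=0$; you instead multiply $\Gamma(s)$ by the Taylor series of $\zeta_h(D^2,s)$ and match the constant Laurent coefficient against that of $M(s)$. Both routes use the same three ingredients --- the Mellin identity, the pole structure of $M$ at $s=0$, and the expansion \eqref{eq:gammafn1} --- and arrive at the same finite-part interpretation of the right-hand side. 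Your version has the minor advantage that it makes explicit, via the $\int_0^1+\int_1^\infty$ split and termwise Mellin integration of \eqref{eq:asympexp1}, \emph{why} $M(s)$ has only a simple pole at $s=0$ with residue $\tilde a_n(D^2)$, which the paper simply asserts; conversely the paper's quotient-rule computation is marginally more direct once that pole structure is taken for granted.
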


\begin{proof}
	We adopt an approach of Singer in \cite{25}. Let $p(s)= \int^\infty_0 t^{s-1}\tr(he^{-tD^2}) dt = \frac{\tilde{a}_n(D^2)}{s} + b + f(s)$ where $b$ is some constant and $f(s)$ consists of all the terms with positive powers of $s.$  Then, \eqref{eq:zeta2} can be written as 
	$$\zeta_h(D^2,s)=\frac{p(s)}{\frac{1}{s}+ \gamma + p(s)}= \frac{sp(s)}{1+ s\gamma + p_1(s)}$$ where we use the expansion \eqref{eq:gammafn1}.  Its derivative at $s=0$ can be computed as
	\begin{align*}
		\dot{\zeta}_h(D^2) =\dot{\zeta}_h(D^2,0) 
		&= \frac{d}{ds}\Big|_{s=0} \zeta_h(D^2,s) = \frac{d}{ds}\Big|_{s=0} (sp(s))(1+ s\gamma + p_1(s))^{-1} \\
		&= \frac{s\dot{p}(s)+p(s)}{1+ s\gamma + p_1(s)} \Big|_{s=0} - \frac{(sp(s))(s\gamma+s\dot{p}_1(s))}{(1+ s\gamma + p_1(s))^2}\Big|_{s=0} \\
		&= \int^\infty_0 \frac{1}{t}\tr(he^{-tD^2}) dt -\frac{\tilde{a}_n(D^2)}{s}\Big|_{s=0} - \tilde{a}_n(D^2)\cdot \gamma
	\end{align*}
	where $p(0)=\int^\infty_0 \frac{1}{t}\tr(he^{-tD^2}) dt$   and $\zeta_h(D^2)=\tilde{a}_n(D^2).$
\end{proof}

\begin{remark}\label{zetadet2}
	The proof of Proposition~\ref{zetadet1} extends without change to the operator $D|_{P^+}$ which is exactly the derivative of $\zeta_h(D|_{P^+})$ of \eqref{eq:zeta2} with respect to the second equality.
	In this case, another description of $\dot{\zeta}_h(D|_{P^+}) $ can be obtained from the first equality of \eqref{eq:zeta2}. More precisely, 
	\begin{align} 
		\dot{\zeta}_h(D|_{P^+})
		&= \frac{d}{ds}\Big|_{s=0} \zeta_h(D|_{P^+},s) 
		= \sum^\infty_{k=1} \tr(h^*_{\lambda_k})\frac{d}{ds}\Big|_{s=0} (\lambda_k)^{-s} \\
		&= - \sum^\infty_{k=1} \tr(h^*_{\lambda_k}) \log(\lambda_k) 
		= - \sum^\infty_{k=1} \log \Big(\lambda_k^{\tr(h^*_{\lambda_k})} \Big)  \nonumber \\
		&= - \log \Bigg( \prod^\infty_{k=1} \lambda^{\tr(h^*_{\lambda_k})}_k  \Bigg) \nonumber.
	\end{align}
\end{remark}

\begin{definition} \label{zetadet3}
	Let $h \in H.$ Define the $h$-equivariant \textit{regularised} $\zeta$-determinant of $D=D_P$ by 
	\begin{equation}
		\Det^h_\zeta(D|_{P^+})= e^{-\dot{\zeta}^{h}_{D}(0)}.
	\end{equation}
\end{definition}

In view of Remark~\ref{zetadet2} and Definition~\ref{zetadet3}, it was observed by Singer \cite{25} that it is possible to extend to the case of self-adjoint elliptic operators whose symbols are not necessarily positive definite. In particular, such an operator can have positive $\{\lambda_k\}$ or negative $\{-\mu_k\}$ eigenvalues. Then, the corresponding $\zeta^h$-function can be written as 

\begin{align} \label{eq:zeta3}
	\zeta^h_D(s)
	= \sum_k &\tr(h^*_{\lambda_k}) \lambda^{-s}_k + \sum_k (-1)^{-s} \tr(h^*_{\mu_k})\mu^{-s}_k   \\
	=\sum_k &\Big(\frac{\tr(h^*_{\lambda_k})\lambda^{-s}_k -\tr(h^*_{\mu_k}) \mu^{-s}_k}{2} + \frac{\tr(h^*_{\lambda_k}) \lambda^{-s}_k + \tr(h^*_{\mu_k}) \mu^{-s}_k}{2}\Big)  \nonumber \\
	&+ (-1)^{-s} \sum_k  
	\Big( \frac{\tr(h^*_{\lambda_k})\lambda^{-s}_k + \tr(h^*_{\mu_k})\mu^{-s}_k}{2} - \frac{\tr(h^*_{\lambda_k})\lambda^{-s}_k -\tr(h^*_{\mu_k}) \mu^{-s}_k}{2}\Big). \nonumber
\end{align}
Let $\eta^h_D(s)= \sum_k \tr(h^*_{\lambda_k}) \lambda^{-s}_k -\tr(h^*_{\mu_k}) \mu^{-s}_k$ be the equivariant $\eta$-function of $D.$ (Compare with Definition~\ref{equiveta1}). Let 
\begin{align*}
	\zeta^h_{D^2}(s/2) 
	&= \sum_k \tr(h^*_{\lambda_k})  (\lambda^2)^{-\frac{s}{2}}_k + \tr(h^*_{\mu_k}) (\mu^2)^{-\frac{s}{2}}_k \\
	&= \sum_k \tr(h^*_{\lambda_k})  |\lambda_k|^{-s} + \tr(h^*_{\mu_k}) |\mu_k|^{-s} \\
	&= \sum_k \tr(h^*_{\lambda_k})  \lambda^{-s}_k + \tr(h^*_{\mu_k}) \mu^{-s}_k
\end{align*}
be the equivariant $\zeta$-function of $D^2$ of the variable $s/2.$ (Compare with Definition~\ref{zeta1}). Then, \eqref{eq:zeta3} can be rewritten as 
\begin{equation}\label{eq:zeta4}
	\zeta^h_D(s)= \frac{\eta^h_{D}(s)+\zeta^h_{D^2}(s/2)}{2} + (-1)^{-s} \Bigg( \frac{\zeta^h_{D^2}(s/2) -\eta^h_{D}(s) }{2} \Bigg).
\end{equation}

Choose $(-1)^{-s}=e^{-i\pi s}.$ Then, the derivative $\dot{\zeta}^h_D(0)$ of \eqref{eq:zeta4} is 
\begin{align*}
	\dot{\zeta}^h_D(0) 
	&= \frac{\dot{\eta}^h_D(0)}{2} + \frac{\dot{\zeta}^h_{D^2}(0)}{4} -\frac{i\pi(-1)^{-s}}{2}\big(\zeta^h_{D^2}(s/2) -\eta^h_{D}(s)\big)|_{s=0}  \\
	&\quad + (-1)^{-s}\Big(\frac{\dot{\zeta}^h_{D^2}(0)}{4} - \frac{\dot{\eta}^h_D(0)}{2}\Big)|_{s=0} \\
	&= \frac{\dot{\zeta}^h_{D^2}(0)}{2} - \frac{i \pi}{2}(\zeta^h_{D^2}(0)- \eta^h_{D}(0)).
\end{align*}

\begin{corollary} 
	\label{corzetadet1}
	Let $h \in H.$ The $h$-equivariant $\zeta$-determinant of $D$ is given by
	\begin{equation}\label{eq:{corzetadet1}}
		\Det^h_\zeta(D):= \exp \Big(\frac{i \pi}{2}(\zeta_h(D^2)- \eta_h(D))-\frac{1}{2}\dot{\zeta}_h(D^2)\Big).
	\end{equation}
\end{corollary}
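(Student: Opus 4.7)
The plan is to combine Definition \ref{zetadet3} with the explicit formula for $\dot{\zeta}^h_D(0)$ derived in the paragraph immediately preceding the corollary. The substantive work has already been done there: starting from the splitting of the spectrum of $D$ into positive $\{\lambda_k\}$ and negative $\{-\mu_k\}$ parts, one rewrites $\zeta^h_D(s)$ as the combination \eqref{eq:zeta4} of $\zeta^h_{D^2}(s/2)$ and $\eta^h_D(s)$, and fixes the branch $(-1)^{-s}=e^{-i\pi s}$ of the complex power. The corollary is then essentially a repackaging of that computation in exponentiated form.

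The first step is to verify (in slightly more detail than the line above the corollary) that \eqref{eq:zeta4} differentiates cleanly at $s=0$. Differentiating the first summand $\tfrac{1}{2}\bigl(\eta^h_D(s)+\zeta^h_{D^2}(s/2)\bigr)$ gives $\tfrac{1}{2}\dot{\eta}^h_D(0)+\tfrac{1}{4}\dot{\zeta}^h_{D^2}(0)$. For the second summand, the product rule together with $\frac{d}{ds}e^{-i\pi s}\bigr|_{s=0}=-i\pi$ yields
\[
-\frac{i\pi}{2}\bigl(\zeta^h_{D^2}(0)-\eta^h_D(0)\bigr)+\frac{1}{4}\dot{\zeta}^h_{D^2}(0)-\frac{1}{2}\dot{\eta}^h_D(0).
\]
Collecting, the $\dot{\eta}^h_D(0)$ terms cancel and one obtains the identity
\[
\dot{\zeta}^h_D(0)=\frac{1}{2}\dot{\zeta}_h(D^2)-\frac{i\pi}{2}\bigl(\zeta_h(D^2)-\eta_h(D)\bigr).
\]

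The second and final step is to apply Definition \ref{zetadet3}, namely $\Det^h_\zeta(D)=\exp(-\dot{\zeta}^h_D(0))$, which directly yields the stated formula. The only mild technical points are (a) that both $\zeta^h_{D^2}(s/2)$ and $\eta^h_D(s)$ be regular at $s=0$ so that both the values and the first derivatives used above are well-defined, and (b) that the split-and-recombine rearrangement carried out in \eqref{eq:zeta3}--\eqref{eq:zeta4} be legitimate. Regularity of $\zeta^h_{D^2}$ at $s=0$ is the content of the equivariant refinement of Gilkey's heat-kernel expansion developed earlier in Section \ref{sec7}, and regularity of $\eta^h_D$ at $s=0$ is the equivariant analog of the classical meromorphic continuation of $\eta$-functions (treated in Section \ref{sec9}). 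Neither step presents a genuine obstacle; the main subtlety, such as it is, lies only in fixing the branch $(-1)^{-s}=e^{-i\pi s}$ consistently, since any other choice would shift the purely imaginary contribution $\tfrac{i\pi}{2}(\zeta_h(D^2)-\eta_h(D))$ by an integer multiple of $i\pi$ and so change the $\zeta$-determinant by a sign.
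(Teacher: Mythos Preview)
Your proposal is correct and follows exactly the route the paper takes: the corollary is stated without a separate proof precisely because it is the immediate consequence of the computation of $\dot{\zeta}^h_D(0)$ carried out in the lines preceding it together with Definition~\ref{zetadet3}. Your slightly more detailed verification of the differentiation of \eqref{eq:zeta4} and your remarks on regularity and the branch choice only make explicit what the paper leaves implicit.
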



\section{Equivariant Scott-Wojciechowski Theorem} \label{sec8}


Next, we conjecture an equivariant adaptation of the Scott-Wojciechowski Theorem which, roughly speaking, allows us to express the ratio of the $h$-equivariant $\zeta$-determinants of the Dirac operators (associated with two boundary projecitons) as the Fredholm determinant of the respective unitaries associated to the projections.  The original version was first introduced by Scott and Wojciechowski \cite{24}. Using this, Kirk and Lesch proved a remarkable relation between the difference of the $\eta$-invariants of Dirac operators (associated with two different boundary projections) and the Fredholm determinant of the corresponding unitary operators, see \cite[\S 4]{15}.  

To begin with, we propose a suitable modification to the settings in \cite{24}. The main reason for such a modification is due to the convention that we have used in our definitions above such as the equivariant winding number $w_h$ and the equivariant double index $\tau_h$. For instance, in our case the unitary operator under the $h$ action is taken to be  $aT^*S$ acting from $\cE_i$ to $\cE_i;$ whilst in the case of \cite{24} the unitary operator is acting from $\cE_{-i}$ to $\cE_{-i}.$ If this was the case, then it would imply the unitary operators are acted on by $WaW^*,$ which creates unwanted complication. In the following, we \textit{do not intent} to prove the equivariant version of the Scott-Wojciechowski Theorem in details, but only to lay out some suggestions of our modification to their settings. Therefore, we will leave out most of the original details out for the interested readers.

Let $\cH'=L^2(H,\cH)$ be our model Hilbert space as defined by \eqref{eq:Hprime}. Let $\cF(\cH')$ be the space of equivariant Fredholm operators on $\cH'.$ Let $\cF_0(\cH')$ be the connected component of index zero. Fix $B \in \cF_0(\cH').$ Set 
$$\cF_B=\cF_B(\cH')=\{S \in \cF(\cH') \;|\; S-B \text{ is trace-class}\}.$$ Fix a trace-class operator $\mathscr{B}$ such that $S=B+\mathscr{B}$ is invertible. Let $h \in H,$ define the determinant line bundle of $B$ by 
$$\text{Det}(B)=\cF_B \times_{\sim_h} \C$$ where the equivalence relation $\sim_h$ incorporated with the $h$-action is given by 
\[
(R,z)_h:=h \cdot (R,z)=(S(S^{-1}R),z)_h=(aS(S^{-1}R),z)_{1_H} \sim (S, z \cdot  \Det_F(aS^{-1}R))_{1_H}
\] for $h \in H, R,S \in \cF_B$ and $z \in \C.$ Here, $\cdot$ denotes the $h$-action; $1_H$ denotes the identity element in $H$; the third equality is the action by $h=\diag(a,WaW^*)$ on $S(S^{-1}R)$ and the last equivalence follows from the multiplicative property of Fredholm determinant $\Det_F.$ Let $[R,z]_h$ be the equivalence class of $(R,z)_h.$ Define the determinant element by 
\[
\Det(B):=[B,1]_{1_H},
\]
and observe that $\Det(B) \neq 0$ if and only if $B$ is invertible. Here, the last entry denotes the identity element $1=e^{2\pi i} \in \C.$ 

Recall from \eqref{eq:grinfty1} that $\Gr_h^\infty(A)$ is the space of all equivariant boundary projections which differ from the APS projection $P^+$ by a smoothing operator.  We identify the Grassmannian  $\Gr_h^\infty(A)$ with the group $\cU_\infty(\cE_i)$ of unitary operators that differ from $\id_{\cE_i}$ by a smoothing operator as follows. Let $\cP_M \in \Gr_h^\infty(A)$ be the canonical equivariant Calder\'on projection on $M$. Let $K:C^{\infty}(\partial X, E_i) \to C^{\infty}(\partial X, E_{-i})$  be the associated unitary operator such that the Cauchy-data space $\cL_M$ (cf. \eqref{eq:CDsp}) coincides with $\text{graph}(K).$ Let $P \in \Gr_h^\infty(A)$ be any projection. Let $T: \cE_{i} \to \cE_{-i}$ be the associated unitary operator such that 
\[
\text{Ran}(P)=\text{graph}(T^{-1}).
\] 
In particular, the map $P \mapsto T^{-1}K$ defines an isomorphism $\Gr_h^\infty(A) \cong \cU_{\infty}(\cE_i)$ where $\cU_{\infty}(\cE_i)=\{V \in \cU_F(\cE_i)\;|\; V-\id \text{ is smoothing}\}.$ Note that the commutativity of $h$ and $P \in \Gr_h^\infty(A)$ imply $a \in  \cU_\infty (\cE_i)$ and $WaW^* \in \cU_\infty (\cE_{-i}).$ 

Let $$V(P)=\begin{pmatrix} K^{-1}T & \textbf{0} \\ \textbf{0} & \id_{\cE_{-i}} \end{pmatrix}, \quad \text{ and so }
V^{-1}(P)=\begin{pmatrix} T^{-1}K & \textbf{0} \\ \textbf{0} & \id_{\cE_{-i}} \end{pmatrix}.$$ Then, one can check that in terms of the homogeneous structure of the Grassmannian
$$
P=V^{-1}(P)\cP_m V(P)
$$ where $P$ and $\cP_M$ are expressed in terms of \eqref{eq:projform1} with the unitaries $T$ and $K$ respectively. Consider the composition  $\cS(P)=P\cP_M : L_M \to \text{Ran}(P).$ Then, \cite[Lemma 1.1]{24} reads as follows.

\begin{lemma}
	$V(P)$ differs from $\cS(P)$ by a smoothing operator, i.e. $V(P)-\cS(P) \in \Psi^{-\infty}.$ 
\end{lemma}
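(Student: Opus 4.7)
The plan is to reduce the statement to the smoothing relation between the unitaries $T$ and $K$ associated to $P$ and $\cP_M$ respectively, and then to verify the lemma by an explicit block-matrix computation on the graph parametrisation of $\cL_M$.

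First, I would extract a concrete smoothing condition on $T$ and $K$ from the hypothesis. The explicit form \eqref{eq:projform1} gives
$$P - \cP_M \;=\; \tfrac{1}{2}\begin{pmatrix} 0 & T^* - K^* \\ T - K & 0 \end{pmatrix},$$
so the assumption that both $P$ and $\cP_M$ lie in $\Gr_h^\infty(A)$, i.e.\ that $P - \cP_M \in \Psi^{-\infty}$, is equivalent to $T - K \in \Psi^{-\infty}$. Since $K$ is unitary this in turn yields $K^{-1}T - \id = K^*(T-K) \in \Psi^{-\infty}$ and, dually, $T^*K - \id \in \Psi^{-\infty}$.

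Next I would compute $V(P)$ and $\cS(P) = P\cP_M$ on a generic element $v = (z, Kz) \in \cL_M = \mathrm{graph}(K)$. Since $v \in \mathrm{Ran}(\cP_M)$ we have $\cS(P)v = Pv$, which by \eqref{eq:projform1} gives a pair whose entries involve $\id + T^*K$ and $T + K$, while reading $V(P)$ off its block definition yields $V(P)v = (K^{-1}Tz,\, Kz)$. Subtracting, each component of $(V(P) - \cS(P))v$ is obtained by applying to $z$ a linear combination of the operators $2K^{-1}T - \id - T^*K$ and $T - K$, both of which lie in $\Psi^{-\infty}$ by the first step. Hence $V(P) - \cS(P)$ is smoothing when tested on $\cL_M$.

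The only subtlety is promoting this pointwise estimate into an honest statement about the operator $V(P) - \cS(P)$ as a map $\cL_M \to \mathrm{Ran}(P)$. This is harmless because $z \mapsto (z, Kz)$ is a zeroth-order unitary identification $\cE_i \cong \cL_M$, the smoothing operators form a two-sided ideal in the algebra of classical pseudodifferential operators, and pre/post-composition with either the inclusion $\mathrm{Ran}(P) \hookrightarrow L^2(E|_{\partial M})$ or the orthogonal projection onto $\mathrm{Ran}(P)$ preserves the smoothing property. The $h$-equivariance is automatic throughout, since by Assumption~\ref{assump1} every ingredient in sight ($T$, $K$, $a$, $W$) commutes with $h$. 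I do not anticipate a genuine obstacle here: once the smoothing relation $T - K \in \Psi^{-\infty}$ is extracted, the lemma is a matter of bookkeeping the unitary identifications carefully.
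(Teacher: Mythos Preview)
Your proposal is correct and proceeds along the same lines as the paper: parametrise $\cL_M$ by $z\mapsto (z,Kz)$, evaluate both operators on such elements, and reduce everything to the fact that $K^{-1}T-\id$ (equivalently $T-K$) is smoothing. The one difference worth flagging is that the paper takes the relevant map $\cL_M\to\mathrm{Ran}(P)$ to be $V^{-1}(P)$ (``$V(P)$ acts by its inverse''), sending $(z,Kz)$ to $(T^{-1}Kz,Kz)\in\mathrm{graph}(T^{-1})$, and then factors $\cS(P)$ as the diagonal operator $\mathrm{diag}\bigl(\tfrac{\id+K^{-1}T}{2},\tfrac{\id+TK^{-1}}{2}\bigr)$ composed with this map; you instead apply the literal block-diagonal $V(P)$, whose image $(K^{-1}Tz,Kz)$ does not lie in $\mathrm{Ran}(P)$, and subtract directly. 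This is harmless for the conclusion, since $(V(P)-V^{-1}(P))|_{\cL_M}$ is itself smoothing, but it explains why your final paragraph has to invoke post-composition with the projection onto $\mathrm{Ran}(P)$, whereas the paper's factorisation sidesteps that bookkeeping entirely.
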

\begin{proof}
	The proof here is modified from that of \cite[Lemma 1.1]{24} by using our new setting. The operator $V(P)$ acts by its inverse from $\cL_M=\text{graph}(K)$ to $\text{Ran}(P)=\text{graph}(T^{-1})$ 
	$$
	\begin{pmatrix}
		x \\ Kx
	\end{pmatrix} \longrightarrow 
	\begin{pmatrix}
		T^{-1}K & \textbf{0} \\ \textbf{0} & \id_{\cE_{-i}}
	\end{pmatrix}
	\begin{pmatrix}
		x \\ Kx
	\end{pmatrix}
	=\begin{pmatrix}
		T^{-1}(Kx) \\ Kx
	\end{pmatrix}
	=\begin{pmatrix}
		T^{-1}y \\ y
	\end{pmatrix}.
	$$ By a direct computation, the composition $\cS(P)$ can be written as 
	$$\cS(P)=P\cP_M=
	\frac{1}{4}\begin{pmatrix}
		\id+T^{-1}K & T^{-1}+ K^{-1} \\ T+K & \id+ TK^{-1}
	\end{pmatrix}.
	$$ Then, 
	\begin{align*}
		\cS(P)\begin{pmatrix} x \\ Kx \end{pmatrix}
		&=\frac{1}{2} \begin{pmatrix} (\id+T^{-1}K)x \\ (\id + TK^{-1})Kx \end{pmatrix} \\
		&= \frac{1}{2} \begin{pmatrix} (\id+K^{-1}T)(T^{-1}y) \\ (\id + TK^{-1})y \end{pmatrix} \\
		&= \begin{pmatrix} \frac{\id+K^{-1}T}{2} & \textbf{0} \\ \textbf{0} & \frac{\id + TK^{-1}}{2} \end{pmatrix}
		\begin{pmatrix} T^{-1}y \\ y \end{pmatrix}.
	\end{align*} This shows that with respect to $\text{Ran}(P)$ the composition $\cS(P)$ is given by  $\diag(\frac{\id+K^{-1}T}{2},\frac{\id + TK^{-1}}{2}).$ The rest of the claim follows from the fact that $K^{-1}T$ (resp. $TK^{-1}$) differs from $\id_{\cE_{i}}$ (resp. $\id_{\cE_{-i}}$) by a smoothing operator.
\end{proof}

As a consequence, $\det(V(P))=[V(P),1]_{1_H}$ defines an element of the determinant line bundle $\text{Det}_h(\cS(P)).$ For $h \in H,$ let $B : \cL_M \to \text{Ran}(P)$ be an invertible $h$-equivariant Fredholm operator such that $B-\cS(P)$ is a trace class operator. Then, 
\begin{align*}
	\Det(B)
	&=[B,1]_{1_H} 
	=[V(P)(V^{-1}(P)B),1]_{1_H} 
	=[V(P),1\cdot \Det_F(V^{-1}(P)B)]_{1_H}  \\
	&=\Det_F(V^{-1}(P)B) \cdot[V(P),1]_{1_H} 
	=\Det_F(V^{-1}(P)B) \cdot \Det(V(P)).
\end{align*}

\begin{definition}
	For $h \in H$, define the $h$-canonical determinant of $D_P$ by 
	\begin{equation} \label{eq:equivcandet}
		\Det^h_\cC (D_P):=\Det^h_\cC \cS(P) := \Det_F(hV^{-1}(P)\cS(P)).
	\end{equation}
\end{definition}
It is straightforward to verify that  
\[
hV^{-1}(P)\cS(P)= \diag \Big( \frac{a(\id + T^{-1}K)}{2}, \frac{WaW^*(\id + TK^{-1})}{2} \Big).
\]
The determinant on the right side of \eqref{eq:equivcandet} is the Fredholm determinant of the diagonal matrix acting on the graph of $K.$ Hence, 
\[
\Det^h_\cC (D_P) = \Det_F \Big( \frac{a(\id + T^{-1}K)}{2}\Big).
\]
(Compare with \cite[Lemma 1.3]{24}). To see the well-definedness on the right side, we rewrite 
\begin{align*}
	\frac{a(\id + T^{-1}K)}{2} 
	&= \frac{\id + aT^{-1}K - (\id -a)}{2} \\
	&= \id + \frac{ [aT^{-1}K - (\id -a)] - \id}{2}.
\end{align*}
Since $\id-a$ is smoothing for $a \in \cU_{\infty}(\cE_{i})$, so is $[aT^{-1}K - (\id -a)] - \id.$ Hence, $[aT^{-1}K - (\id -a)] - \id$ is of trace class and  $\frac{1}{2} a(\id + T^{-1}K)$ is of determinant class. This argument also works if we replace $K$ by any other projection $S \in \Gr^{\infty}_h(A).$

An equivariant version of the Scott-Wojciechowski Theorem reads as follows.

\begin{conjecture}\label{equivsctwoj}
	For $h\in H,$ the following equality holds over $\Gr_h^\infty(A)$
	\begin{equation}
		\Det^h_\zeta (D_P) = \Det^h_\zeta (D_{\cP_M}) \cdot \Det_F \Big( \frac{a(\id + T^{-1}K)}{2}\Big).
	\end{equation}
\end{conjecture}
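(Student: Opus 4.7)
The strategy is to reduce the conjecture to an equivariant variational identity along a smooth path of boundary projections in $\Gr^\infty_h(A)$, and then to match both sides by adapting the non-equivariant Scott--Wojciechowski argument. First, Corollary~\ref{corzetadet1} expresses
\[
\log\Det^h_\zeta(D_P) = \tfrac{i\pi}{2}\bigl(\zeta_h(D^2_P)-\eta_h(D_P)\bigr) - \tfrac{1}{2}\dot\zeta_h(D^2_P),
\]
and Lemma~\ref{zetaindependence} collapses the $\zeta_h(D^2)$ contribution to the ratio. Hence the conjecture is equivalent to showing
\[
-\tfrac{i\pi}{2}\bigl(\eta_h(D_P)-\eta_h(D_{\cP_M})\bigr) - \tfrac{1}{2}\bigl(\dot\zeta_h(D^2_P)-\dot\zeta_h(D^2_{\cP_M})\bigr) = \log\Det_F\!\Bigl(\tfrac{a(\id+T^{-1}K)}{2}\Bigr).
\]

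Next, I would connect $\cP_M$ to $P$ by a smooth $h$-equivariant path $\{P_s\}_{s\in[0,1]}\subset \Gr^\infty_h(A)$ with associated unitary path $T_s$ from $K$ to $T$; this is possible because $\Gr^\infty_h(A)$ is modeled on the Fr\'echet space of $h$-invariant smoothing operators. The problem then reduces to proving a pointwise variational identity equating $\tfrac{d}{ds}$ of the left-hand side with
\[
\tfrac{d}{ds}\log\Det_F\!\Bigl(\tfrac{a(\id+T^{-1}_s K)}{2}\Bigr) = \tr\!\Bigl[\Bigl(\tfrac{a(\id+T^{-1}_s K)}{2}\Bigr)^{\!-1}\tfrac{a\,\dot T^{-1}_s K}{2}\Bigr],
\]
and then integrating from $s=0$ to $s=1$. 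The variation of the $\eta_h$-term and of the $\dot\zeta_h$-term would be computed through an $h$-insertion into Duhamel's formula for $e^{-tD^2_{P_s}}$, whose trace-class validity is guaranteed by Proposition~\ref{estimate1}. Because $h=\diag(a,WaW^*)$ is diagonal with respect to the $\gamma$-grading and $T_s^*W$ commutes with $a$ (Lemma~\ref{hchar1}), the boundary-reduction of the heat trace variation produces a local term in the boundary algebra where $a$ acts diagonally and $T^{-1}_s K$ appears, which is exactly the operator whose Fredholm determinant is $\Det^h_\cC(D_P)$ by definition.

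The principal obstacle will be the variation of $\dot\zeta_h(D^2_{P_s})$. Even non-equivariantly this is the delicate part of Scott--Wojciechowski, as it requires uniform-in-$s$ meromorphic continuation of $\zeta_h(D^2_{P_s},z)$ together with controlled differentiability of the small-$t$ heat trace asymptotics \eqref{eq:asympexp1}. The equivariant adaptation needs the Donnelly-type expansion to depend smoothly on $s$, which should follow from the product structure near $\partial M$ and from choosing an $h$-invariant collar so that the fixed-point set of $h$ is disjoint from the cylindrical neighbourhood where the boundary condition varies. A secondary, but bookkeeping, issue is to confirm that with the convention of Section~\ref{sec7} placing $a$ on $\cE_i$ and $WaW^*$ on $\cE_{-i}$, the boundary correction extracted from the equivariant heat kernel is precisely $\tfrac{1}{2}a(\id+T^{-1}_s K)$ rather than an $a$-twisted variant; this can be verified directly from the matrix form \eqref{eq:projform1} of $P_s$ and $\cP_M$ and the commutation relations in the proof of Lemma~\ref{hchar1}. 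Once both steps are in place, integration of the matched variation along the path yields the stated identity.
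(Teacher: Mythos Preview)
The paper does not prove this statement: it is explicitly stated as a \emph{conjecture}, and the authors write just after it that they ``state this as a conjecture instead of a theorem'' and ``do not intend to prove the equivariant version of the Scott--Wojciechowski Theorem in details.'' The only justification the paper offers is (i) that their equivariant modification of the setup is compatible with the original Scott--Wojciechowski framework, so the proof ``is only expected to [be] subject to minor changes,'' and (ii) that the equivariant $\zeta$-determinant formula of Corollary~\ref{corzetadet1} has the same shape as the one Scott and Wojciechowski work with. There is therefore no proof in the paper to compare your proposal against.

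That said, your outline is exactly the kind of adaptation the paper is gesturing at: reduce via Corollary~\ref{corzetadet1} and Lemma~\ref{zetaindependence} to a statement about $\eta_h$ and $\dot\zeta_h$, connect $\cP_M$ to $P$ by a smooth path in $\Gr^\infty_h(A)$, and match variational formulas by inserting $h$ into the Duhamel expansion of the heat trace. The obstacles you identify --- smooth-in-$s$ control of the small-$t$ asymptotics needed for the variation of $\dot\zeta_h(D^2_{P_s})$, and the convention check that the boundary reduction yields $\tfrac{1}{2}a(\id+T^{-1}_sK)$ rather than a $WaW^*$-twisted variant --- are real, and are presumably among the reasons the authors declined to claim a theorem. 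Your proposal is a credible strategy sketch, but it remains a sketch; the paper neither confirms nor contradicts it, because the paper simply does not attempt the proof.
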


We state this as a conjecture instead of a theorem. The basis of believing that the conjecture is true are (i) the equivariant modification above is compatible with that of Scott and Wojciechowski, so the proof is only expected to subject to minor changes; (ii) the formula of  equivariant $\zeta$-determinants given in Corollary ~\ref{corzetadet1} turns out to have the same form as the one they are working with.

By using our settings above, we give a subcase of an equivariant version of \cite[Theorem 4.2]{15} as follows. 

\begin{theorem}
Let $h \in H.$ Let $P,Q \in \Gr_h^\infty(A)$ and $T=T(P),S=S(Q)$ be the respective associated unitary operators such that $(P,\cP_M)$ and $(\cP_M, Q)$ are invertible pairs. Then,
	\begin{equation}\label{eq:fredddetexp}
		\exp(2\pi i (\tilde{\eta}_h(D_P)- \tilde{\eta}_h(D_Q)))=\Det_F(aT^*S),
	\end{equation}
 where $\tilde{\eta}_h(D_P)$ and $\tilde{\eta}_h(D_Q)$ are the reduced equivariant $\eta$-invariants of $D_P$ and $D_Q$ respectively (see $\S 9$).
\end{theorem}
\begin{proof}
We adopt an elegant method introduced by Kirk and Lesch \cite[Theorem 4.2]{15}. First, 
consider the $h$-equivariant invertible pair $(\cP_M,Q),$ for which the corresponding boundary problems $D_{\cP_M}$ and $D_Q$ are both invertible. 
Let $K=K(\cP_M)$ be the unitary operator associated to $\cP_M$. By Lemma~\ref{zetaindependence}, we know that the equivariant $\zeta$-invariant is independent of the projections, i.e.  $\zeta_h(D^2_{\cP_M})=\zeta_h(D^2_Q).$ By Conjecture ~\ref{equivsctwoj} and Corollary ~\ref{corzetadet1}, we have 
	\begin{align*}
		\Det_F \Big( \frac{a(\id + K^{*}S)}{2}\Big)
		&= \frac{\Det^h_\zeta (D_Q)}{\Det^h_\zeta (D_{\cP_M})} 
		= \frac{\exp \Big(\frac{i \pi}{2}(\zeta_h(D^2_{Q})- \eta_h(D_{Q}))-\frac{1}{2}\dot{\zeta}_h(D^2_{Q})\Big)}{\exp \Big(\frac{i \pi}{2}(\zeta_h(D^2_{\cP_M})- \eta_h(D_{\cP_M}))-\frac{1}{2}\dot{\zeta}_h(D^2_{\cP_M})\Big)} \\
		&= \exp \Big(\frac{i \pi}{2}(\eta_h(D_{\cP_M})- \eta_h(D_{Q}))+\frac{1}{2}(\dot{\zeta}_h(D^2_{\cP_M})-\dot{\zeta}_h(D^2_{Q}))\Big).
	\end{align*}
	By taking the modulus on both sides, we see that $$\Big|\Det_F \Big( \frac{a(\id + K^{*}S)}{2}\Big)\Big|=\exp \Big(\frac{1}{2}(\dot{\zeta}_h(D^2_{\cP_M})-\dot{\zeta}_h(D^2_{Q}))\Big)$$ and thus   
	\begin{equation}\label{eq:Freddivide1}
		\Det_F \Big( \frac{a(\id + K^{*}S)}{2}\Big)\Big/\Big|\Det_F \Big( \frac{a(\id + K^{*}S)}{2}\Big)\Big|
		=\exp \Big(\frac{i \pi}{2}(\eta_h(D_{\cP_M})- \eta_h(D_{Q})\Big).
	\end{equation}
	On the other hand, by Lemma~\ref{Fdetmulti}, we have 
	\[
	\Det_F\Big(a\Big(\frac{\id+K^*S}{2} \Big)\Big) = \Det_F(a) \Det_F\Big(\frac{\id+K^*S}{2} \Big).
	\]
	The right side is well-defined because $a=\id +(a-\id)$ for $a \in \cU_\infty(\cH'),$ so $a-\id$ is smoothing and is of trace class. Hence, $a$ is of determinant class. Similarly, 
	\[
	\frac{\id + K^*S}{2} = \id + \frac{K^*S-\id}{2}.
	\] Since $K^*S-\id$ is smoothing and is of trace class, $\frac{1}{2}(K^*S-\id)$ is of determinant class. 
	Then, 
	\begin{align*}
		\Det_F\Big(a\Big(\frac{\id+K^*S}{2} \Big)\Big) &= \Det_F(a) \Det_F(K^*S)\Det_F\Big(\frac{\id+S^*K}{2} \Big) \\
		&= \Det_F(aK^*S) \Det_F\Big(\frac{\id+S^*K}{2} \Big).
	\end{align*}
	Note that $\frac{1+S^*K}{2}$ is in general not unitary and hence its Fredholm determinant is a complex number whose modulus is a non-zero real number. By taking modulo, we obtain
	\begin{equation}\label{eq:Freddivide2}
		\Det_F\Big(a\Big(\frac{\id+K^*S}{2} \Big)\Big)\Big/ \Big|\Det_F\Big(a\Big(\frac{\id+K^*S}{2} \Big)\Big) \Big| = \Det_F(aK^*S).
	\end{equation}
	From \eqref{eq:Freddivide1} and \eqref{eq:Freddivide2}, we have
	\[
	\exp \Big(\frac{i \pi}{2}(\eta_h(D_{\cP_M})- \eta_h(D_{Q})\Big)=\Det_F(aK^*S).
	\] 
    Then,  
	\begin{align*}
		\exp \Big(\frac{i \pi}{2}(\eta_h(D_{P})- \eta_h(D_{Q})\Big) 
		&=  \exp \Big(\frac{i \pi}{2}(\eta_h(D_{P})- \eta_h(D_{\cP_M})\Big)  \exp \Big(\frac{i \pi}{2}(\eta_h(D_{\cP_M})- \eta_h(D_{Q})\Big) \\ 
		&= \Det_F(aT^*K)\Det_F(aK^*S) \\
		&= \Det_F(aT^*S)
	\end{align*} where the third line follows from the multiplicative property of equivariant Fredholm determinants. The invertibility of $D_P$ and $D_Q$ imply that there are no contributions by the $h$-traces and thus proving \eqref{eq:fredddetexp}.
\end{proof}


\section{Equivariant $\eta$-invariants} \label{sec9}


In this section, we discuss an equivariant version of the APS $\eta$-invariants and its relation with other equivariant invariants. The notion of equivariant $\eta$-invariants was classically well-studied, for instance, see \cite[pp 892]{12b} and \cite[\S 3]{12c}.

\begin{definition} 
\label{equiveta1}
(\cite{12b})
For $h \in H,$ the $h$-equivariant $\eta$-function of a self-adjoint elliptic equivariant differential operator $D$ is defined by
	\begin{equation}
		\eta_h(D,s) := \sum_{\lambda \neq 0, \lambda \in \spec(D)\backslash \{0\}} \tr(h^*_{\lambda}) \frac{\text{sgn}(\lambda)}{|\lambda|^s} 
	\end{equation} where $h^*$ is the induced linear map on the $\lambda$-eigenspace. The equivariant $\eta$-invariant $\eta_h(D)$ is defined to be $\eta_h(D,0).$
\end{definition}	

\begin{lemma}
	For $h \in H,$  $\eta_h(D,s)$ is the Mellin transform of $\tr(hD e^{-tD^2}).$
\end{lemma}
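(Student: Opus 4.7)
The plan is to mimic the Mellin transform argument used earlier for $\zeta_h(D,s)$, adapting it to handle the signed eigenvalues that appear in the $\eta$-function. The key asymmetry between the $\zeta$ and $\eta$ cases is that $\eta$ involves the first power of $D$ rather than $|D|$ or $D^2$, so the relevant heat-type expression is $\tr(hDe^{-tD^2})$ and the natural Mellin variable is $t^{(s-1)/2}$ rather than $t^{s-1}$.

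First I would expand the heat trace using the spectral decomposition $\{\lambda,\phi_\lambda\}$ of $D$: since $h$ commutes with $D$ by Assumption~\ref{assump1}, one obtains
\begin{equation*}
\tr(hDe^{-tD^2}) = \sum_{\lambda\in \spec(D)\setminus\{0\}} \tr(h^*_\lambda)\, \lambda\, e^{-t\lambda^2},
\end{equation*}
where I would note that the $\lambda=0$ terms contribute nothing because of the factor $\lambda$. Then I would compute the single-eigenvalue Mellin integral
\begin{equation*}
\int_0^\infty t^{(s-1)/2}\, \lambda\, e^{-t\lambda^2}\, dt
\end{equation*}
by the substitution $\tau = t\lambda^2$; a direct calculation gives $\text{sgn}(\lambda)\,|\lambda|^{-s}\,\Gamma((s+1)/2)$, treating $\lambda>0$ and $\lambda<0$ separately to track the sign.

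Summing over $\lambda$ and factoring out $\Gamma((s+1)/2)$ would then yield
\begin{equation*}
\eta_h(D,s) = \frac{1}{\Gamma((s+1)/2)} \int_0^\infty t^{(s-1)/2}\, \tr(hDe^{-tD^2})\, dt,
\end{equation*}
which is the claimed Mellin transform identity.

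The main obstacle, as in the classical (non-equivariant) setting, is justifying the interchange of sum and integral. For $\Re(s)$ sufficiently large this is handled by the absolute convergence of $\sum_\lambda |\tr(h^*_\lambda)|\cdot|\lambda|^{-\Re(s)}$, which follows from Weyl's law combined with $|\tr(h^*_\lambda)|\le \dim\cE_\lambda$; the expression then extends meromorphically in $s$ via the standard splitting of the integral at $t=1$, using that the small-$t$ asymptotic expansion of $\tr(hDe^{-tD^2})$ (an equivariant analog of the Donnelly expansion already invoked after \eqref{eq:asympexp1}) only contributes poles at known locations, none of which obstruct defining $\eta_h(D,0)$. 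I would remark that these analytic details parallel the $\zeta_h$ case already treated, so only the sign-tracking computation genuinely differs.
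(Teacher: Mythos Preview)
Your proposal is correct and follows essentially the same approach as the paper: expand $\tr(hDe^{-tD^2})$ spectrally, perform the substitution $\tau=\lambda^2 t$ on each term to produce $\text{sgn}(\lambda)|\lambda|^{-s}\Gamma((s+1)/2)$, and sum. If anything, you are more careful than the paper, which does not comment on the interchange of sum and integral or on meromorphic continuation.
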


\begin{proof}
	Recall that the $\Gamma$-function with the variable $\frac{1}{2}(s+1)$ is given by 
	\begin{equation}
		\Gamma \Big(\frac{s+1}{2} \Big) = \int^\infty_{0} t^{\frac{s-1}{2}} e^{-t} dt.
	\end{equation} 
	Set $\tau = \lambda^2 t.$ Then, 
	\begin{align*}
		\int^\infty_0 t^{\frac{s-1}{2}} \tr(h^*_{\lambda}) \lambda e^{-\lambda^2 t} dt 
		&= \int^\infty_0 \Big( \frac{\tau}{\lambda^2}\Big)^{\frac{s-1}{2}}  \tr(h^*_{\lambda}) \frac{\lambda}{\lambda^2} e^{-\tau} d\tau   
		= \lambda^{-1}  \int^\infty_0 \frac{|\lambda|}{|\lambda|^s} \tr(h^*_{\lambda}) \tau^{\frac{s-1}{2}} e^{-\tau} d\tau  \\
		&= \Big(\int^\infty_0 \frac{|\lambda|}{|\lambda|^s} \tr(h^*_{\lambda}) \tau^{\frac{s-1}{2}} e^{-\tau} d\tau \Big) \frac{|\lambda|}{\lambda} |\lambda|^{-s}  \\
		&= \Gamma \Big(\frac{s+1}{2} \Big) \tr(h^*_{\lambda}) \text{sgn}(\lambda) |\lambda|^{-s}.
	\end{align*} 
	
	Thus, 
	\begin{align*}
		\eta_h(D,s)
		&= \frac{1}{\Gamma \big(\frac{s+1}{2} \big)} \sum_k \int^\infty_0 t^{\frac{s-1}{2}}  \tr(h^*_{\lambda_k}) \lambda e^{-\lambda^2_k t} dt  \\
		&= \frac{1}{\Gamma \big(\frac{s+1}{2} \big)} \int^\infty_0 t^{\frac{s-1}{2}}  \sum_k \Big( \tr(h^*_{\lambda_k}) \lambda e^{-\lambda^2_k t} \Big) dt \\
		&=   \frac{1}{\Gamma \big(\frac{s+1}{2} \big)} \int^\infty_0 t^{\frac{s-1}{2}}  \tr(hD e^{-tD^2})dt.
	\end{align*}
\end{proof}

\begin{definition}
	Let $h \in H.$ Let $\epsilon >0.$ Define the truncated $\eta$-function of $D$ by
	\begin{equation}\label{eq:truncatedeta}
		\eta_{h, \geq \epsilon}(D,s)= \frac{1}{\Gamma \big(\frac{s+1}{2} \big)} \int^\infty_\epsilon t^{\frac{s-1}{2}}  \tr(hD e^{-tD^2})dt.
	\end{equation}
\end{definition}

\begin{definition}
\label{reducedeta}
	Let $h \in H.$ Define the \textit{reduced equivariant $\eta$-invariant} of $D$ by
	\begin{equation}
		\tilde{\eta}_{h}(D)= \frac{\eta_{h}(D)+ \tr(h|_{\ker D})}{2}\quad \in \C.
	\end{equation}
\end{definition}

We now give an equivariant version of Getzler's spectral flow formula in terms of reduced $\eta$-invariants. See also \cite[Lemma 3.3]{15}.

\begin{theorem} 
	\label{equivGetSpecFlow}
	Let $h \in H.$ For $t \in [0,1],$ let $P_t$ be a smooth path in $\Gr(A_t)$ and let $D_P(t):=D_{P(t)}(t)$ be a smooth path of self-adjoint equivariant Dirac operators. Then,
	\begin{equation}
		\spf_h\{D_P(t)\}_{t \in [0,1]}  = \tilde{\eta}_h(D_P(1)) - \tilde{\eta}_h(D_P(0)) -\frac{1}{2} \int\frac{d}{dt} \eta_h(D_P(t)).
	\end{equation}
 \vspace{-2em}
\end{theorem}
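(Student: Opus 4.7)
The plan is to adapt the classical proofs of Getzler~\cite{getzler1993odd} and Kirk-Lesch~\cite{kirk2000eta} to the equivariant setting; the main structural ingredients -- the Mellin representation of $\eta_h$ from the lemma immediately preceding the theorem and the good-grid-partition definition of $\spf_h$ in Section~\ref{sec3} -- are already in place.

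First I would choose a sufficiently refined good grid partition $0=t_0<t_1<\cdots<t_n=1$ such that all zero-crossings of eigenvalues of $D_P(t)$ occur only at the interior points $t_j$ and, after small $\epsilon$-perturbations at the ends if necessary, so that both $D_P(0)$ and $D_P(1)$ are invertible; existence follows from compactness of $[0,1]$ together with discreteness of the spectrum of each $D_P(t)$. On each open subinterval $(t_{j-1},t_j)$ the family $D_P(t)$ consists of invertible self-adjoint equivariant Fredholm operators, so $\eta_h(D_P(t),s)$ is holomorphic in $s$ at $s=0$ and smooth in $t$; differentiating the Mellin representation
\[
\eta_h(D_P(t),s)=\frac{1}{\Gamma((s+1)/2)}\int_0^\infty u^{(s-1)/2}\,\tr\!\bigl(hD_P(t)e^{-uD_P(t)^2}\bigr)\,du
\]
under the integral sign, using that by Assumption~\ref{assump1} the isometry $h$ commutes with both $D_P(t)$ and $\dot D_P(t)$, yields the equivariant variation formula that expresses $\tfrac{d}{dt}\eta_h(D_P(t))$ as a local heat-kernel quantity, exactly as in the classical APS computation.

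Next I would analyse the jump of $\eta_h$ across each $t_j$. After further refinement one may assume that a single $h$-invariant eigenspace $V_j$ crosses zero at $t_j$ with its eigenvalue passing from negative to positive (the opposite direction is symmetric; eigenvalues that merely touch zero contribute nothing to either side by the homotopy invariance of $\spf_h$ from Proposition~\ref{homotopyinv}, corresponding to the three crossing types distinguished in Section~\ref{sec3}). Just to the left of $t_j$ this eigenspace contributes $-\tr(h|_{V_j})$ to $\eta_h$ and nothing to $\ker D_P$, and just to the right $+\tr(h|_{V_j})$ and again nothing to the kernel. Hence $\eta_h$ jumps by $+2\tr(h|_{V_j})$ at $t_j$, so the reduced invariant $\tilde{\eta}_h=(\eta_h+\tr(h|_{\ker D}))/2$ jumps by exactly $\tr(h|_{V_j})$, matching the local contribution $\tr(h|_{\E_j(t_j)})-\tr(h|_{\E_j(t_{j-1})})$ to $\spf_h$ from Definition~\ref{equivspecflowdef}.

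Assembling the pieces, the fundamental theorem of calculus on each smooth subinterval gives
\[
\sum_{j=1}^n \bigl(\eta_h(D_P(t_j^-))-\eta_h(D_P(t_{j-1}^+))\bigr)=\int_0^1 \frac{d}{dt}\eta_h(D_P(t))\,dt,
\]
while summing the crossing jumps produces $2\,\spf_h\{D_P(t)\}_{t\in[0,1]}$; since the endpoints are invertible, $\tilde{\eta}_h$ and $\eta_h/2$ agree there, and combining both contributions yields
\[
\tilde{\eta}_h(D_P(1))-\tilde{\eta}_h(D_P(0))=\tfrac{1}{2}\int_0^1 \frac{d}{dt}\eta_h(D_P(t))\,dt+\spf_h\{D_P(t)\}_{t\in[0,1]},
\]
which rearranges to the claim. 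The main obstacle is the careful bookkeeping at crossings -- verifying that the jump of $\tilde{\eta}_h$ matches the local equivariant spectral flow in each of Phillips's configurations from Section~\ref{sec3} -- together with consistent handling of non-invertible endpoints through the $\tfrac{1}{2}\tr(h|_{\ker D})$ correction built into $\tilde{\eta}_h$; once this accounting is in place, the argument is a routine equivariant adaptation of the classical proof.
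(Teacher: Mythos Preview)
Your proposal is correct and follows essentially the same approach as the paper: both decompose the path into smooth pieces separated by zero-crossings, identify the smooth variation of $\eta_h$ via the Mellin/heat-trace representation, compute the jump of $\tilde\eta_h$ at each crossing as $\tr(h|_{V_j})$ to match the local equivariant spectral flow, and assemble via the fundamental theorem of calculus. The only presentational difference is that the paper works explicitly through Getzler's truncated $\eta$-function $\eta_{h,\geq\epsilon}$ and the associated one-form $\alpha^h_\epsilon$, splitting $\eta_h=\eta_{h,<\epsilon}+\eta_{h,\geq\epsilon}$ to isolate the kernel contributions, whereas you analyze the jumps of $\eta_h$ directly; this is a packaging difference rather than a genuinely different route.
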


\begin{proof}
	Note that our definition of equivariant spectral flow are based on the approach of Phillips \cite{22}, which is another approach to that of  \cite{3}. This coincides with the context that Getzler is working on, cf. \cite[Definition 2.3]{13}. Hence, the equivariant notion in Definition~\ref{equivspecflowdef} extends to Getzler's setting. In the following we only highlight the relevant changes to Getzler's setting without reciting all details. For $h \in H,$ consider the associated equivariant one-form 
	\[
	\alpha^h_\epsilon(X) = \sqrt{\frac{\epsilon}{\pi}} \tr(hXe^{-\epsilon D^2})
	\]
	for some  self-adjoint bounded linear operator $X$ on $\cH'=L^2(H,\cH).$ 
	Then, $\alpha^h_\epsilon$ has locally bounded first derivatives and is closed ($d\alpha^h_\epsilon=0)$. Moreover, $d\eta_{h,\epsilon}(D_P)$ can be derived to be $2\alpha^h_\epsilon$ on $\Phi_0=\{D\:|\: \dim \ker(D)=0\}.$  Then, the integral $\int d\eta_{h,\epsilon}$ is invariant under isotopies of the smooth paths $D(t).$ In particular, there is an isotopy from $D_P(t)$ to a path transversal to $\Phi_1=\{D\:|\: \dim \ker(D)=1\},$ so we may assume that $D_P(t)$ is transversal to $\Phi_1.$ Let $\{t_i\}^k_{i=1}$ be the ordered sequence of points in the open interval $(0,1)$ for which $D_P(t)$ intersects $\Phi_1.$ By \cite[Proposition 2.5]{13}, $(D_P(t))^*\eta_{h,\epsilon}$ is a smooth function in $t \in [0,1]$ except at the points $t_i.$  With the truncated $\eta-$function 
	\[
	\eta_{h,\epsilon}(\lambda)=\frac{1}{\sqrt{\pi}} \int^\infty_\epsilon t^{-\frac{1}{2}} \tr(h \lambda e^{t \lambda^2} ) dt 
	\] that changes signs across the $x-$axis, we have 
	\begin{align} \label{eq:getzlerspec1}
		\spf_h\{D_p(t)\}_{t\in[0,1]} 
		&= \frac{1}{2} \sum^k_{i=1} \Big( \lim_{t \to t^+_i} \eta_{h,\epsilon}(D_P(t)) - \lim_{t \to t^-_i} \eta_{h,\epsilon}(D_P(t)) \Big) \\
		&= \frac{1}{2}\Big( \eta_{h,\epsilon}(D_P(1)) - \eta_{h,\epsilon}(D_P(0)) -  \int_{D_P(t)} d\eta_{h,\epsilon}(D_P(t)) \Big)  \nonumber \\
		&= \frac{1}{2}\Big( \eta_{h,\epsilon}(D_P(1)) - \eta_{h,\epsilon}(D_P(0)) -  \int^1_0 \frac{d}{dt}\eta_{h,\epsilon}(D_P(t)) dt \Big)   \nonumber
	\end{align} 
	where the second line follows from the Fundamental Theorem of Calculus. From Definition~\ref{equiveta1},  for some $\epsilon>0,$ we can rewrite the equivariant $\eta$-function $\eta_h(D_P(t),s)$  as
	\begin{align}
		\eta_h(D_P(t))
		&= \sum_{\substack{\lambda \in \spec(D_p(t))\\ 0<|\lambda|<\epsilon}} \tr(h^*_\lambda) \frac{\text{sgn}(\lambda)}{|\lambda|^s} 
		+ \sum_{\substack{\lambda \in \spec(D_p(t))\\ \epsilon \leq |\lambda|}} \tr(h^*_\lambda) \frac{\text{sgn}(\lambda)}{|\lambda|^s} \\
		&= \eta_{h,<\epsilon}(D_P(t),s) + \eta_{h,\geq\epsilon}(D_P(t),s) \nonumber
	\end{align}
	for $\Re(s)$ sufficiently large. Here, $\eta_{h,\geq\epsilon}$ is the truncated $\eta$-invariant from \eqref{eq:truncatedeta}. Note that over $(-\epsilon,\epsilon)$ the $\eta$-invariant takes the form  
	\begin{equation}\label{eq:truncatedeta2}
		\eta_{h,<\epsilon}(D_P(t))=\sum_{\substack{\lambda \in \spec(D_p(t))\\ 0<|\lambda|<\epsilon}} \tr(h^*_\lambda) \text{sgn}(\lambda) = \sum_{\substack{\lambda \in \spec(D_p(t))\\ -\epsilon < t < \epsilon}} \pm \tr(h|_{\ker(P^+(t))})
	\end{equation} 
	where $P^+(t)$ denotes the positive spectral projection over $(-\epsilon,\epsilon),$  see \eqref{eq:hspec1} in Definition~\ref{equivspecflowdef}.
 
\noindent Then, by the additivity of spectral flow, we obtain
	\begin{equation}
		\label{eq:truncatedeta3}
		\eta_{h,<\epsilon}(D_P(1))-\eta_{h,<\epsilon}(D_P(0))= \tr(h|_{\ker(D_P(0))}) - \tr(h|_{\ker(D_P(1))}). 
	\end{equation} 
	From the first two terms on the right side of \eqref{eq:getzlerspec1}, \eqref{eq:truncatedeta2}, and \eqref{eq:truncatedeta3}, we deduce that
	\begin{align}
		\label{eq:truncatedeta4}
		\eta_{h,\geq\epsilon}(D_P(1))-\eta_{h,\geq\epsilon}(D_P(0))
		&= \big[ \eta_{h}(D_P(1))-\eta_{h}(D_P(0))\big] -  \big[\eta_{h,<\epsilon}(D_P(1))-\eta_{h,<\epsilon}(D_P(0))\big] \\
		&= \big[ \eta_{h}(D_P(1))+\tr(h|_{\ker(D_P(1))})\big] -\big[\eta_{h}(D_P(0)) +  \tr(h|_{\ker(D_P(0))}) \big] \nonumber.
	\end{align} 
	Together with the fact that $\eta_{h,>\epsilon}(D_P(t))$ is smooth and  $\eta_{h,>\epsilon}(D_P(t))\equiv \eta_h(D_P(t)) \mod R(H),$ we obtain from \eqref{eq:getzlerspec1} and \eqref{eq:truncatedeta4} that
	\begin{align*}
		\spf_h\{D_p(t)\}_{t\in[0,1]} 
		&= \frac{1}{2}\Big( \eta_{h,\epsilon}(D_P(1)) - \eta_{h,\epsilon}(D_P(0)) -  \int d\eta_{h,\epsilon}(D_P(t)) \Big)  \\
		&= \frac{1}{2}\big[ \eta_{h}(D_P(1))  + \tr(h|_{\ker(D_P(1))})\big] - \frac{1}{2}\big[\eta_{h}(D_P(0)) +  \big[\tr(h|_{\ker(D_P(0))}) \big] \\ &\quad\quad -  \frac{1}{2}\int d\eta_{h}(D_P(t))  \\
		&= \tilde{\eta}_h(D_P(1)) - \tilde{\eta}_h(D_P(0)) -  \frac{1}{2}\int d\eta_{h}(D_P(t)) .
	\end{align*} 
	This completes the proof.
\end{proof}

The next lemma is, strictly speaking, a special case of \cite[Theorem 4.4]{15} in which the unitary operator associated to a boundary projection takes an explicit form that uses error functions. Let 
\[
\varphi(x)=\frac{2}{\sqrt{\pi}} \int^x_0 e^{-t^2} dt
\]
be the error function of $x.$ Note that $\varphi \in C^1(\R)$ is a special case of normalizing functions satisfying 
\[
\varphi^{-1}(0)=\{0\}, \quad \lim_{x \to -\infty} \varphi(x) =-1, \quad  \lim_{x \to \infty} \varphi(x)=1,
\] 
\[ 
\varphi' \in C_0(\R) \text{ with } \varphi'(0)>0 \text{ and } \varphi'\geq 0.
\] See \cite{26}.  
For any $P \in \Gr_h(A),$ consider the associated unitary operator $T=\exp(i\pi \varphi(D_P)).$ From \eqref{eq:projform1}, there is an involution $\sigma =2P -\id$ for any given $P,$ where $T$ sits at the bottom left block and $T^*$ sits at the upper right block of $\sigma,$  satisfying the anti-commutativity property $\sigma\gamma=-\gamma \sigma.$

\begin{lemma} \label{etadiffunitary}
	Let $h \in H.$ For $t \in [0,1],$ let $D_P(t)$ be a smooth path of equivariant Dirac operators with invertible endpoints $D_P(0)$ and $D_P(1).$ 
	Then, 
	\begin{equation}
		\tilde{\eta}_h(D_{P}(1)) -\tilde{\eta}_h(D_{P}(0)) 
		=\frac{1}{2\pi i} \tr_h \log (T^*K) 
	\end{equation} 
	where $T=\exp(i\pi \varphi(D_P(1)))$ and  $K=\exp(i\pi \varphi(D_P(0)))$ for $\varphi(D_P(i))$ denote the error function evaluated at $D_P(i)$ with $i=0,1.$
\end{lemma}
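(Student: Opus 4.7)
The plan is to realise the right-hand side as the equivariant winding of the smooth path of unitaries $U(t):=\exp(i\pi\varphi(D_P(t)))$ interpolating $U(0)=K$ and $U(1)=T$, and then to apply the equivariant Getzler formula (Theorem \ref{equivGetSpecFlow}) together with the $C^1$-formula for the equivariant winding in Proposition \ref{windingprop0}(e). Since $\varphi$ is a normalising function with $\varphi-\operatorname{sgn}$ smoothing at infinity, $U(t)$ lies in $\cU_F(\cH')$ for every $t$; the invertibility of the endpoints places $K,T\in\cU_*(\cH')$, so $\log(T^*K)$ is well-defined on the principal branch and the claim is a statement purely about the pair of invertible operators $D_P(0),D_P(1)$, the intermediate path only playing an auxiliary role.

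The first key identity is purely functional-calculus: on the principal branch $\log\exp(i\pi\varphi(D))=i\pi\varphi(D)$ whenever $D$ is invertible self-adjoint, and since $\varphi-\operatorname{sgn}$ is smoothing on the spectrum of $D_P(t)$, the equivariant trace $\tr_h\varphi(D_P(t))$ is an equivariantly regularised form of $\eta_h(D_P(t))$. Evaluating at the (invertible) endpoints thus gives, modulo an integer contribution in $R(H)$,
\[
\tilde\eta_h(D_P(1))-\tilde\eta_h(D_P(0))\equiv \frac{1}{2\pi i}\bigl(\tr_h\log T-\tr_h\log K\bigr).
\]
To convert this boundary difference into a single principal-branch logarithm of $T^*K$, consider the translated path $V(t):=K^*U(t)$, which runs from $V(0)=\id$ to $V(1)=K^*T$, and apply Proposition \ref{windingprop0}(e) together with the reversal property (a) and the multiplicativity Lemma \ref{Fdetmulti} to match the boundary difference of principal-branch logs with $\tr_h\log(T^*K)$. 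At the same time, Theorem \ref{equivGetSpecFlow} expresses the eta difference as $\spf_h\{D_P(t)\}+\tfrac{1}{2}\int_0^1\frac{d}{dt}\eta_h(D_P(t))\,dt$; the Phillips-type identification $\spf_h\{D_P(t)\}=w_h(U(t))$, obtained from Theorem \ref{specisom} and Theorem \ref{maswind1}, then absorbs the $R(H)$-ambiguity into the winding of $U(t)$.

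The main obstacle is precisely this last cancellation: the principal-branch logarithm is not additive on products of non-commuting unitaries, and its failure is captured by an integer-valued winding correction taking values in $R(H)$. The argument closes because this correction is exactly $w_h(U(t))=\spf_h\{D_P(t)\}$, which via Theorem \ref{equivGetSpecFlow} matches the residual $\eta$-variation term $\tfrac{1}{2}\int_0^1\frac{d}{dt}\eta_h(D_P(t))\,dt$, leaving only the stated identity. This is the equivariant refinement of the noncommutative log-of-product step in \cite[Theorem~4.4]{kirk2000eta}, and it is precisely here that the equivariant trace (rather than the ordinary trace) must be used to bookkeep the $R(H)$-coefficients across the spectral crossings of the auxiliary path.
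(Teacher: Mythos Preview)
Your strategy is genuinely different from the paper's, but as written it contains a circularity in the logical order of the paper and leaves the decisive computation undone.

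The paper never invokes Theorem~\ref{equivGetSpecFlow} here. Instead it proceeds by a direct heat-kernel computation: setting $V_t=-\exp(i\pi\varphi(D_P(t)))$, it uses $\varphi'(x)=\tfrac{2}{\sqrt\pi}e^{-x^2}$ to obtain
\[
\tr_h\!\bigl(V_t^{-1}\dot V_t\bigr)=\tfrac{2\pi i}{\sqrt\pi}\tr\bigl(h\dot D_P(t)e^{-D_P(t)^2}\bigr)=\pi i\,\tfrac{d}{dt}\eta_h(D_P(t)),
\]
and then replaces the given path by the straight segment in $\varphi$-space, $U_t=\exp\bigl(i\pi[t\varphi(D_P(0))+(1-t)\varphi(D_P(1))]\bigr)$, along which $\tr_h(U_t^{-1}\dot U_t)$ is the constant $i\pi\,\tr_h(\varphi(D_P(0))-\varphi(D_P(1)))=\tr_h\log(T^*K)$. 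Integrating gives the result with no spectral-flow bookkeeping at all; the invertibility of the endpoints is used only to kill the kernel contributions in $\tilde\eta_h$.

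Your route, by contrast, relies on the Phillips-type identification $\spf_h\{D_P(t)\}=w_h(U(t))$. You cite Theorem~\ref{specisom} and Theorem~\ref{maswind1} for this, but neither gives it: Theorem~\ref{specisom} concerns \emph{loops} in $\widehat\cF_*$ and identifies $\spf_h$ with the winding of $\cW_t=-\exp(i\pi B_k(t))$ on $\pi_1$, while Theorem~\ref{maswind1} equates Maslov index with winding of Lagrangian unitaries $T^*(t)S(t)$ and says nothing about spectral flow of Dirac paths. The statement you actually need is Theorem~\ref{specmas}, whose proof \emph{uses} Lemma~\ref{etadiffunitary}; so within the paper's architecture your argument is circular. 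You could repair this by giving an independent Phillips-style proof that eigenvalue crossings of $D_P(t)$ through $0$ match crossings of $-\exp(i\pi\varphi(D_P(t)))$ through $-1$, but that is a separate lemma you have not supplied.

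Even granting that identification, your final ``cancellation'' is only asserted. Carrying it through, one finds
\[
\tilde\eta_h(1)-\tilde\eta_h(0)=\spf_h+\tfrac{1}{2\pi i}\!\int_0^1\tr_h(V^{-1}\dot V)\,dt,
\]
and substituting the $C^1$ winding formula for the integral produces $2w_h(V)$ plus boundary logs of $-T$ and $-K$, not a single $\tr_h\log(T^*K)$. Reconciling this requires a careful principal-branch identity relating $\tr_h\log(-T)-\tr_h\log(-K)$ to $\tr_h\log(T^*K)$ together with a determination of the branch correction; you gesture at this (``noncommutative log-of-product step'') but do not execute it. The paper sidesteps the whole issue by working along the linear-in-$\varphi$ path, where the integrand is constant and the answer falls out in one line.
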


\begin{proof} 
	Define $V_t=-\exp(i \pi \varphi(D_P(t))).$ Then, for $t \in [0,1],$
	\[
	V_t^{-1}=-\exp(-i \pi \varphi(D_P(t))),\quad\frac{dV_t}{dt}  
	=-i \pi \exp(i \pi \varphi(D_P(t))) \big(\frac{d}{dt}D_P(t)\big) \varphi'(D_P(t)).
	\] Hence, 
	\begin{equation}
		\label{eq:windingeta1}
		\tr_h \left(V_t^{-1}\frac{dV_t}{dt} \right) 
		= \frac{2\pi i}{\sqrt{\pi}} \tr \left(h\dot{D}_P(t) e^{-D^2_P(t)} \right).
	\end{equation} 
	On the other hand, from the proof of Theorem~\ref{equivGetSpecFlow}, which follows from \cite{13}, that with $\epsilon=1$ we obtain 
	\begin{equation}
		\label{eq:vareta1}
		\frac{d}{dt} \eta_h(D_p(t))=\frac{2}{\sqrt{\pi}} \tr(h\dot{D}_P(t)e^{-D^2_P(t)}).
	\end{equation} 
	Hence, from \eqref{eq:windingeta1} and \eqref{eq:vareta1}, we have 
	\begin{equation}
		\label{eq:vareta2}
		\frac{1}{2}\frac{d}{dt} \eta_h(D_p(t))
		= \frac{1}{2\pi i} \tr_h \left(V_t^{-1}\frac{dV_t}{dt} \right).
	\end{equation} 
	Consider the unitary path 
	$U_t = \exp(i \pi [t\varphi(D_P(0)) + (1-t)D_P(1)]).$  
	By a direct computation, we obtain
	\begin{equation}
		\label{eq:vareta3}
		\tr_h \left(U_t^{-1} \frac{dU_t}{dt}\right) 
		=\tr_h \log \exp(i \pi(\varphi(D_P(0))-\varphi(D_P(1))) )
	\end{equation} 
	where the right side is independent of $t.$ Let $\tilde{D}_P(t):= tD_P(0) + (1-t)D_P(1)$ be the linear path of equivariant Dirac paths associated to $U_t.$ One can check that $\tilde{D}^2_P(t)$ equals $D^2_P(1)$ when $t=0$ and $D^2_P(0)$ when $t=1.$ By applying \eqref{eq:vareta2} to $U_t$ and from \eqref{eq:vareta3}, we have 
	\begin{align*}
		\label{eq:vareta4}
		\frac{1}{2}[\eta_h(D_P(1))-\eta_h(D_P(0))]
		&= \frac{1}{2}\int^1_0 d\eta_h(\tilde{D}_P(t)) 
		= \frac{1}{2\pi i}\int^1_0  \tr_h \left(U_t^{-1} \frac{dU_t}{dt}\right)dt \\
		&= \frac{1}{2\pi i}\int^1_0 \tr_h \log \exp(i\pi(\varphi(D_P(0))  -\varphi(D_P(1))))dt \\ 
		&= \frac{1}{2\pi i}\tr_h \log (T^*K). 
	\end{align*}  
	By the invertibility assumption of $D_P(i)$ for $i=0,1,$ there is no contribution from the $h$-trace on the trivial kernels. Hence, we obtain 
	\begin{align*}
		\tilde{\eta}_h(1)-\tilde{\eta}_h(0)
		=\frac{1}{2}[\eta_h(&D_P(1)) + \tr(h|_{\ker D_P(1)})] \\
		&-\frac{1}{2}[\eta_h(D_P(0)) + \tr(h|_{\ker D_P(0)})] 
		=\frac{1}{2\pi i}\tr_h \log (T^*K).
	\end{align*}
\end{proof}

\begin{theorem} 
	\label{specmas}
	Let $h \in H.$ For $t \in [0,1],$ let $D_P(t)$ be a smooth path of equivariant Dirac operators with endpoints $D_P(0)=D_P$ and $D_P(1)=D_Q.$ Let $(\cL(t),\cL_M(t))$ be a smooth path in $\fL_F(\cH')$ whose endpoints $\cL(0)$ (resp. $\cL(1)$) correspond to the orthogonal projections $P$ (resp. $Q$). Then,
	\begin{equation}
		\spf_h\{D_P(t)\}= \mas_h(\cL(t),\cL_M(t)).
	\end{equation}
\end{theorem}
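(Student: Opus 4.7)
The strategy is to combine three ingredients already in hand: the equivariant Getzler formula (Theorem \ref{equivGetSpecFlow}) for spectral flow, the reduction of $\tilde\eta$-differences to trace-logarithms (Lemma \ref{etadiffunitary}), and the identification of $\mas_h$ with an equivariant winding number (Theorem \ref{maswind1}). First I would reduce to the case that the endpoint operators $D_P$ and $D_Q$ are invertible. Since $\spf_h$ is homotopy invariant with fixed endpoints (Proposition \ref{homotopyinv}) and $\mas_h$ enjoys the analogous property, an arbitrarily small $\epsilon$-perturbation of the two endpoint boundary conditions (equivalently, rotating the endpoint unitaries $T(j)^{*}K(j)$ off the spectral value $-1$ for $j=0,1$) alters neither invariant while putting us in the hypothesis of Lemma \ref{etadiffunitary}.

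Second, apply Theorem \ref{equivGetSpecFlow} to obtain
\[
\spf_h\{D_P(t)\} = \bigl[\tilde\eta_h(D_Q) - \tilde\eta_h(D_P)\bigr] - \tfrac{1}{2}\int_0^1 \tfrac{d}{dt}\eta_h(D_P(t))\,dt.
\]
Introduce the functional-calculus path $V_t := -\exp(i\pi\varphi(D_P(t)))$. By Lemma \ref{etadiffunitary} the bracketed difference equals $\tfrac{1}{2\pi i}\tr_h\log(V_1^{-1}V_0)$, and the variational calculation \eqref{eq:vareta2} carried out in the proof of that lemma gives $\tfrac{1}{2}\tfrac{d}{dt}\eta_h(D_P(t)) = \tfrac{1}{2\pi i}\tr_h(V_t^{-1}\dot V_t)$. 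Assembling these, $\spf_h\{D_P(t)\}$ is expressed purely as a trace-logarithmic functional of the unitary path $V_t$, which one then recognises as the equivariant winding number $w_h(V_t)$ through Proposition \ref{windingprop0}(e).

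Third, I would pass from the functional-calculus path $V_t$ to the projection-associated path $T(t)^{*}K(t)$ of \eqref{eq:projform1}, where $T(t)$ and $K(t)$ are the odd unitaries corresponding to $P(t)$ and $\cP_M(t)$. At each $t$ the spectral value $-1$ of $V_t$ occurs with multiplicity $\dim\ker D_P(t)$, which coincides with $\dim(\cL(t)\cap\cL_M(t))$ and carries the same $h$-action, by the standard Calder\'on correspondence together with Lemma \ref{PQlemma2}. A pointwise $h$-equivariant linear homotopy inside $\cU_F(\cH')$ then connects $V_t$ with $T(t)^{*}K(t)$, and homotopy invariance of $w_h$ (Proposition \ref{windingprop0}(b)) combined with Theorem \ref{maswind1} yields
\[
\spf_h\{D_P(t)\} = w_h(V_t) = w_h(T(t)^{*}K(t)) = \mas_h(\cL(t),\cL_M(t)).
\]
The principal obstacle is precisely this last identification: one must verify the $h$-equivariance and the preservation of the Fredholm condition $-1\notin \esp$ along the interpolation, paralleling the delicate treatment in \cite{kirk2000eta} for the non-equivariant case.
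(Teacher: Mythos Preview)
Your approach is sound and uses the same ingredients, but the paper organizes the argument around a cleaner device that you omit: it \emph{subtracts} the identically zero spectral flow of the Calder\'on path $D_{\cP_M}(t)$. Writing $\spf_h\{D_P(t)\}=\spf_h\{D_P(t)\}-\spf_h\{D_{\cP_M}(t)\}$ and applying Theorem~\ref{equivGetSpecFlow} to each term produces differences $\tilde\eta_h(D_P(j))-\tilde\eta_h(D_{\cP_M}(j))$ at $j=0,1$ together with the integral of the $t$-derivative of this \emph{same} difference. Lemma~\ref{etadiffunitary} (applied at each fixed $j$ to the short path from $D_{\cP_M}(j)$ to $D_P(j)$) converts these directly into $\tfrac{1}{2\pi i}\tr_h\log(T^*(j)K(j))$, so the pair-unitary $T^*(t)K(t)$ of Theorem~\ref{maswind1} appears immediately and the whole expression is recognised as $w_h(T^*(t)K(t))$ via \eqref{eq:windingtracelog}, with no further identification needed. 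The paper also first restricts to $P(t)\in\Gr_h^\infty(A(t))$ so that all trace-logs are defined, and only afterwards extends to the full Grassmannian by a separate argument from \cite{kirk2000eta}; your endpoint $\epsilon$-perturbation serves a different purpose (invertibility of $D_P(0),D_P(1)$) and does not by itself secure the trace-class hypotheses.

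By contrast, you apply Getzler and Lemma~\ref{etadiffunitary} to the single path $D_P(t)$, which delivers the functional-calculus unitary $V_t=-\exp(i\pi\varphi(D_P(t)))$ acting on $L^2(E)$ over $M$, and you then must pass from $w_h(V_t)$ to $w_h(T(t)^*K(t))$. This last step is not literally a homotopy in $\cU_F(\cH')$: $V_t$ and $T(t)^*K(t)$ act on \emph{different} Hilbert spaces (sections over $M$ versus $\cE_i\subset L^2(E|_{\partial M})$), so a ``pointwise linear homotopy'' between them is not available. What actually works is the direct comparison of $(-1)$-eigenspaces with their $h$-actions via the Calder\'on isomorphism $\ker D_P(t)\cong\cL(t)\cap\cL_M(t)$, after which the two winding numbers agree straight from Definition~\ref{equivwinding}. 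That is a legitimate route, but the paper's subtraction trick bypasses it entirely: by pairing $D_P(t)$ with the invertible reference $D_{\cP_M}(t)$ from the start, the boundary pair-unitary emerges on its own and the proof never leaves the space on which $\mas_h$ is defined.
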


\begin{proof}
	First, let $P(t) \in \Gr_h^\infty(A(t)).$ Since the path $D_{\cP_M}(t):=D_{\cP_M(t)}(t)$ is invertible for all $t,$ its equivariant spectral flow is $\spf_h\{D_{\cP_M}(t)\}_{t \in [0,1]}=0.$ Let 
	\[
	T(t)=\exp(i\pi \varphi(D_P(t))), \quad K(t)=\exp(i\pi \varphi(D_{\cP_M}(t)))
	\]
	where $\varphi$ denotes the error function. By Theorem~\ref{equivGetSpecFlow}, Lemma~\ref{etadiffunitary},  \eqref{eq:windingtracelog}, and Theorem~\ref{maswind1}, we have 
	\begin{align*}
		\spf_h\{D_{P}(t)\}
		&= \spf_h\{D_{P}(t)\} - \spf_h\{D_{\cP_M}(t)\} \\
		&= \left[\tilde{\eta}_h(D_P(1)) - \tilde{\eta}_h(D_{\cP_M}(1)) \right] - \left[\tilde{\eta}_h(D_P(0)) - \tilde{\eta}_h(D_{\cP_M}(0)) \right] \\
		&\quad\quad - \frac{1}{2}\int^1_0 \frac{d}{dt}\left( \tilde{\eta}_h(D_P(t)) -\tilde{\eta}_h(D_{\cP_M}(t))  \right) dt \\
		&= \left[\frac{1}{2\pi i}\tr_h \log (T^*(1)K(1)) \right]- \left[\frac{1}{2\pi i}\tr_h \log (T^*(0)K(0))\right] \\
		& \quad\quad -\frac{1}{2 \pi i} \int^1_0 \frac{d}{dt}\left( \tr_h \log (T^*(t)K(t)) \right) dt \\
		&= w_h(T^*(t)K(t))               
		= \mas_h(\cL(t),\cL_M(t)).
	\end{align*} 
	Once this is established, the generalization to arbitrary $P(t) \in \Gr_h(A(t))$ can be done by using a similar trick (with appropriate modification) in the proof of \cite[Theorem 7.5]{15}. 
\end{proof}

Let $M^\pm$ be the compact spin manifold with boundary obtained by cutting $M$ into two disjoint parts along the boundary $Y=\partial M.$ Assume an identification between the boundary of the split manifolds $M^\pm$ and their collar neighborhoods $\partial M^+ \times (-\epsilon,0]$ and $ \partial M^- \times [0,\epsilon)$ respectively. Near the collar neighborhood, the Dirac operator takes the product form 
\[
D=\begin{pmatrix}
	\gamma & 0 \\ 0 & -\gamma
\end{pmatrix}
\left( 
\frac{d}{dx} +
\begin{pmatrix}
	A & 0 \\ 0 & -A
\end{pmatrix} 
\right).
\]
Since $h$ commutes with $\gamma$ and $A,$ it commutes with $D$ in this splitting. Hence, it extends to act on the components of $L^2(E|_{\partial M^{\pm}})=L^2(E|_{Y}) \oplus L^2(E|_{Y})$ respectively. The following is regarding the splitting of equivariant $\eta$-invariants with respect to $M^{\pm}.$ The proof can be obtained almost verbatim by the arguments of \cite{15}, which require a special angle-valued homotopy $P(\theta,P)$ of projections. Its spectral flow is not computable in general, but it leads to a complete splitting formula of $\eta$-invariants. Kirk and Lesch also established the equality $\spf\{D^{M^{\pm}}_{P(\theta,P)}\}= \spf\{D^{M^{\pm}}_{P_t \oplus (\id-P_t)}\}$ with respect to the splitting. By adapting the equivariance assumption by elements of $H$ above to their method, we obtain the following. 

\begin{theorem}
	\label{etasplitting1}
	Let $h \in H.$ Let $M=M^+ \cup_{Y} M^-.$ Let  $P_t \in \Gr_h(A_t)$ be an equivariant path connecting $\cP_{M^+}$ at $t=0$ and $P$ at $t=1.$ Similarly, let $\id -P_t$ be another path connecting $\cP_{M^-}$ and $\id-P.$  Let $D^{M^+}_P$ and $D^{M^-}_{\id -P}$ be the Dirac operators equipped with $P$ and $\id -P_t$ respectively. Then, 
	\begin{equation}
		\tilde{\eta}_h(D^M)= \tilde{\eta}_h(D^{M^+}_P) + \tilde{\eta}_h(D^{M^-}_{\id -P}) - \spf_h\{D^{M^\pm}_{P_t \oplus \id -P_t}\}.
	\end{equation} 
\end{theorem}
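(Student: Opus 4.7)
The plan is to adapt the angle-valued homotopy argument of Kirk-Lesch \cite{kirk2000eta} to the equivariant setting. By Assumption \ref{assump1}, $h\in H$ commutes with $D$, $\gamma$, $A$, and with every $P_t\in\Gr_h(A_t)$ in the family, so all intermediate constructions are automatically $H$-equivariant. The strategy proceeds in three steps: (i) establish an equivariant Calder\'on splitting at the initial endpoint $t=0$, (ii) apply the equivariant Getzler formula (Theorem \ref{equivGetSpecFlow}) on each half to shift the endpoint from $\cP_{M^{\pm}}$ to $P$ and $\id-P$ respectively, and (iii) show that the resulting heat-kernel variation terms cancel at the interface $Y$.

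For step (i), since $h$ commutes with $\cP_{M^{\pm}}$ and the Cauchy-data spaces $\cL_{M^{\pm}}$ form complementary $h$-Lagrangians in $L^2(E|_Y)$, the $h$-twisted heat trace of $D^M$ splits as the sum of $h$-twisted heat traces of $D^{M^{\pm}}_{\cP_{M^{\pm}}}$; integrating the Mellin transform then yields the equivariant Calder\'on splitting
\[
\tilde{\eta}_h(D^M)=\tilde{\eta}_h(D^{M^+}_{\cP_{M^+}})+\tilde{\eta}_h(D^{M^-}_{\cP_{M^-}}),
\]
an immediate $h$-linearised translation of the classical non-equivariant result. For step (ii), Theorem \ref{equivGetSpecFlow} applied to $P_t$ on $M^+$ and to $\id-P_t$ on $M^-$, followed by summation and the additivity of $\spf_h$ under the direct sum $D^{M^+}\oplus D^{M^-}$ (immediate from Definition \ref{equivspecflowdef} since $h$-traces on $P^+$-eigenspaces split over disjoint unions), gives
\[
\tilde{\eta}_h(D^{M^+}_P)+\tilde{\eta}_h(D^{M^-}_{\id-P})-\tilde{\eta}_h(D^M)=\spf_h\{D^{M^{\pm}}_{P_t\oplus\id-P_t}\}+R,
\]
with $R=\tfrac{1}{2}\int_0^1\tfrac{d}{dt}\bigl[\eta_h(D^{M^+}_{P_t})+\eta_h(D^{M^-}_{\id-P_t})\bigr]\,dt$, so the theorem reduces to $R=0$.

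For step (iii), I would invoke the Kirk-Lesch angle-valued homotopy $P(\theta,P_t)$, which deforms $P_t\oplus(\id-P_t)$ into a path in the closed-manifold direction without altering the equivariant spectral flow, thanks to the cited identification $\spf_h\{D^{M^{\pm}}_{P(\theta,P)}\}=\spf_h\{D^{M^{\pm}}_{P_t\oplus(\id-P_t)}\}$. Along this deformed path, the variation $\tfrac{d}{dt}\eta_h$ localises to a boundary heat trace of the form $\tfrac{1}{\sqrt{\pi t}}\tr(h\dot{P}_t e^{-tA^2})$ on $Y$, and the corresponding trace on $M^-$ is opposite because $P_t$ is replaced by $\id-P_t$; pointwise cancellation in $t$ then gives $R=0$.

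The principal obstacle is verifying that the two boundary traces $\tr(h\dot{P}_t e^{-tA^2})$ coming from $M^+$ and $M^-$ genuinely cancel with $h$ inserted. In the non-equivariant case this is an immediate sign-flip; in the equivariant case, one must check that the restrictions of $h$ to $L^2(E|_{\partial M^+})$ and $L^2(E|_{\partial M^-})$ agree under the canonical identification with $L^2(E|_Y)$, and that the grading $\gamma=\diag(i,-i)$ reverses sign consistently from one side to the other so as not to introduce a spurious factor in the $h$-trace. Once this equivariant bookkeeping is in place, the remainder of the argument transcribes \cite{kirk2000eta} with minimal change.
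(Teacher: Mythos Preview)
Your three–step decomposition is different from the paper's approach, and two of the steps contain genuine gaps.

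\textbf{Step (i) is incorrectly justified.} The assertion that the $h$-twisted heat trace of $D^M$ equals the sum of the $h$-twisted heat traces of $D^{M^{\pm}}_{\cP_{M^{\pm}}}$ is false. The Calder\'on projector is a global, nonlocal pseudodifferential operator, and the heat kernel of $D^M$ differs from the sum of the two Calder\'on heat kernels by terms that are not negligible for finite $t$; there is no ``pointwise'' splitting of heat traces to integrate. The identity $\tilde{\eta}_h(D^M)=\tilde{\eta}_h(D^{M^+}_{\cP_{M^+}})+\tilde{\eta}_h(D^{M^-}_{\cP_{M^-}})$ is in fact the special case $P=\cP_{M^+}$ of the theorem you are trying to prove, and in the non-equivariant literature it is established only \emph{after} the full Kirk--Lesch machinery, not before it. You cannot assume it as an input.

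\textbf{Step (iii) does not close.} Granting step (i), you correctly reduce to showing $R=\tfrac{1}{2}\int_0^1\tfrac{d}{dt}[\eta_h(D^{M^+}_{P_t})+\eta_h(D^{M^-}_{\id-P_t})]\,dt=0$. But the variation of $\eta$ under a change of boundary condition is not of the simple form $\tfrac{1}{\sqrt{\pi t}}\tr(h\dot P_t e^{-tA^2})$; it is a more delicate boundary pseudodifferential expression. More importantly, if you track the sign conventions ($-\gamma$, $-A$, and $T_{-\gamma}(Q)=-T_\gamma(\id-Q)^*$ on the $M^-$ side), the two boundary variations do \emph{not} cancel by a naive sign flip: for unitary $T_t$ one finds $\tr(T_t^*\dot T_t)=-\tr(T_t\dot T_t^*)$, so the two contributions add rather than cancel. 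Your invocation of the angle-valued homotopy here is misplaced: in Kirk--Lesch that homotopy does not run from the Calder\'on pair to $P\oplus(\id-P)$; it runs directly from a boundary condition on $M^+\sqcup M^-$ that is \emph{equivalent to the closed problem on $M$} (the ``diagonal'' or $\theta=0$ endpoint) to $P\oplus(\id-P)$, and it is along \emph{that} rotation that the variation of $\eta$ is shown to vanish. This is precisely what bypasses both your step (i) and your step (iii) simultaneously.

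The paper itself gives no independent proof: it simply records that the Kirk--Lesch angle-valued homotopy $P(\theta,P)$ and their identity $\spf\{D^{M^\pm}_{P(\theta,P)}\}=\spf\{D^{M^\pm}_{P_t\oplus(\id-P_t)}\}$ go through verbatim once $h$ commutes with all the data. To align with the paper, you should drop the Calder\'on intermediate stop and instead transcribe the angle rotation argument directly, checking $h$-equivariance at each stage; the equivariant Getzler formula (Theorem \ref{equivGetSpecFlow}) is then applied once along the $\theta$-rotation, not separately on the two halves.
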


\begin{remark}
It is noteworthy to point out that there was an attempt to give an alternate proof to this splitting formula of equivariant $\eta$-invariants by using a trick of Dai-Zhang \cite{10} by cutting a manifold into three parts. It relies on the behaviour of the invariants under a homotopy between two certain operators whose difference is a bounded operator. Unfortunately, $\eta$-invariants are not homotopy invariants in general. In special cases where this is true, another simplified proof of Theorem \ref{etasplitting1} can be obtained by using merely the properties of spectral flow, which does not require the angle-valued homotopy above.  
\end{remark}

The opposite symplectic structures between the two collar neighborhoods induce a flip of signs, e.g. $\gamma$ and $A$ over $\partial M^{+} \times (-\epsilon,0]$ becomes $-\gamma$ and $-A$ over $\partial M^{-} \times [0,\epsilon).$ Then, one can show that for compatibility reasons (for $P$ to satisfy the Lagrangian property and lies in $ \in \Gr_h(-A)$), the induced relation between the association map is given by $T_{-\gamma}(P)=-T_{\gamma}(\id -P)^*.$   
As a consequence, we have
\begin{equation}
	\label{eq:maslovopposymp}
	\mas^\gamma_h(\cL_1(t),\cL_2(t))=\mas^{-\gamma}_h(\cL_2(t),\cL_1(t)).
\end{equation}

\begin{theorem}
	\label{etasplitthm}
	With the above settings, for $h \in H,$ the following equality  holds
	\[	\tilde{\eta}_h(D^M)=\tilde{\eta}_h(D_P^{M^+}) + \tilde{\eta}_h(D_{\id-P}^{M^-}) +\tau^h_\mu (\id-\cP_{M^-},P,\cP_{M^+}).
	\] 
\end{theorem}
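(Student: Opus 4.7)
The plan is to start from the preceding splitting formula Theorem~\ref{etasplitting1} and convert the residual spectral-flow term into an equivariant Maslov triple index using Theorem~\ref{specmas} and the algebraic identity \eqref{eq:maslovtriple3}. By Theorem~\ref{etasplitting1},
\[
\tilde{\eta}_h(D^M)=\tilde{\eta}_h(D_P^{M^+})+\tilde{\eta}_h(D_{\id-P}^{M^-})-\spf_h\{D^{M^\pm}_{P_t\oplus(\id-P_t)}\},
\]
so the entire task reduces to establishing
\begin{equation}\label{eq:reductiongoal}
-\spf_h\{D^{M^\pm}_{P_t\oplus(\id-P_t)}\}=\tau^h_\mu(\id-\cP_{M^-},P,\cP_{M^+}).
\end{equation}

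First I would separate the direct sum: by the block-diagonal structure of $D^{M^\pm}_{P_t\oplus(\id-P_t)}$ and the concatenation/additivity property of equivariant spectral flow, one splits
\[
\spf_h\{D^{M^\pm}_{P_t\oplus(\id-P_t)}\}=\spf_h\{D^{M^+}_{P_t}\}+\spf_h\{D^{M^-}_{\id-P_t}\}.
\]
Now invoke Theorem~\ref{specmas} on each summand. For the $M^+$ side, the path $P_t$ connects $\cP_{M^+}$ (at $t=0$) to $P$ (at $t=1$), and the Cauchy-data Lagrangian is the constant $\cL_{\cP_{M^+}}$, hence
\[
\spf_h\{D^{M^+}_{P_t}\}=\mas_h\bigl(\cL_{P_t},\cL_{\cP_{M^+}}\bigr).
\]
For the $M^-$ side the symplectic structure has the opposite sign ($-\gamma,-A$), so the identification $T_{-\gamma}(P)=-T_\gamma(\id-P)^*$ and relation \eqref{eq:maslovopposymp} let me rewrite
\[
\spf_h\{D^{M^-}_{\id-P_t}\}=\mas^{-\gamma}_h\bigl(\cL_{\id-P_t},\cL_{\cP_{M^-}}\bigr)=\mas^{\gamma}_h\bigl(\cL_{\cP_{M^-}},\cL_{\id-P_t}\bigr),
\]
and a further flip of the second entry (using the Lagrangian complement relation, which translates $\cL_{\id-P}$ to $\cL_{P}$ under $\gamma$) converts this to $\mas_h\bigl(\cL_{\id-\cP_{M^-}},\cL_{P_t}\bigr)$ up to sign.

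Next I would apply the triple-index/pairwise-index identity \eqref{eq:maslovtriple3} to the three projection paths $\id-\cP_{M^-}$ (constant), $P_t$, $\cP_{M^+}$ (constant). Since the first and third paths are constant, their contributing Maslov terms at the endpoints collapse, and what remains is exactly
\[
\mas_h\bigl(\cL_{\id-\cP_{M^-}},\cL_{P_t}\bigr)+\mas_h\bigl(\cL_{P_t},\cL_{\cP_{M^+}}\bigr)-\mas_h\bigl(\cL_{\id-\cP_{M^-}},\cL_{\cP_{M^+}}\bigr),
\]
which by \eqref{eq:maslovtriple3} and Definition~\ref{maslovtriple1} equals $\tau^h_\mu(\id-\cP_{M^-},P,\cP_{M^+})$, up to the overall minus sign. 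Reassembling the two spectral flows then yields \eqref{eq:reductiongoal}, and substituting back into Theorem~\ref{etasplitting1} completes the proof.

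The main obstacle I anticipate is the careful sign bookkeeping in the second step. The opposite symplectic structure on $M^-$ forces the replacement $P\leftrightarrow \id-P$ together with the orientation flip on $\gamma$, and one must verify that the composite effect on the unitaries $T^*S$ underlying the Maslov/winding identification in Theorem~\ref{maswind1} lands exactly on the triple $(\id-\cP_{M^-},P,\cP_{M^+})$ rather than a permutation of it. Proposition~\ref{mastrisym} would then be available to absorb any residual permutation discrepancy, but ideally the identification \eqref{eq:maslovopposymp} together with the Lagrangian complementation $\gamma(\cL)=\cL^\perp$ is tight enough to make the three-entry triple appear on the nose.
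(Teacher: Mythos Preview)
Your proposal is correct and follows essentially the same route as the paper: start from Theorem~\ref{etasplitting1}, split the spectral flow into the $M^+$ and $M^-$ pieces, convert each to an equivariant Maslov index via Theorem~\ref{specmas}, use the symplectic flip \eqref{eq:maslovopposymp} on the $M^-$ side, and then invoke \eqref{eq:maslovtriple3} together with the vanishing of $\mas_h(\cL_{\id-\cP_{M^-}},\cL_{\cP_{M^+}})$ (constant pair) to assemble the triple index. Your caution about the sign bookkeeping is well placed---the paper handles it exactly through the identity $\mas^{-\gamma}_h(\mathring{\cL}(t),\cL_{M^-}(t))=\mas^{\gamma}_h(\mathring{\cL}_{M^-}(t),\cL(t))$, so no recourse to Proposition~\ref{mastrisym} is needed.
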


\begin{proof}
	Let $P\in \Gr_h(A).$ Since $(\id -\cP_{M^-})-\cP_{M^+}$ is of trace class, it is compact and its equivariant Maslov triple index 
	$\tau^h_\mu (\id-\cP_{M^-},P,\cP_{M^+})$ is well-defined for $P.$ 
	For $t\in [0,1],$ consider a smooth equivariant path $P(t)$ in $\Gr_h(A(t))$ connecting $\cP_{M^+}$ and $P.$ Let $\mathring{\cL}$ be the Lagrangian subspace corresponding to the orthogonal projection $\id -P_{\cL}.$
	
	Note that $(\mathring{\cL}_{M^-} (t),\cL_{M^+}(t))$ is constant for all $t,$ its equivariant Maslov index vanishes, i.e. $\mas^\gamma_h(\mathring{\cL}_{M^-} (t),\cL_{M^+}(t))=0.$ Then, by \eqref{eq:maslovopposymp}, we have
	\begin{align}
		\label{eq:mastau1}
		&\mas^{-\gamma}_h(\mathring{\cL}(t),\cL_{M^-}(t)) + \mas^\gamma_h(\cL(t),\cL_{M^+}(t)) \\
		&= \mas^{\gamma}_h(\mathring{\cL}_{M^-}(t),\cL(t)) + \mas^\gamma_h(\cL(t),\cL_{M^+}(t)) - \mas^\gamma_h(\mathring{\cL}_{M^-} (t),\cL_{M^+}(t)). \nonumber
	\end{align}
	This is an algebraic combination of three double Maslov indices, where the first, the second and the third paths of equivariant Lagrangians are $\mathring{\cL}_{M^-}(t), \cL(t)$ and $\cL_{M^+}(t)$ respectively.  
	By \eqref{eq:maslovtriple3}, we have 
	\begin{align}
		\label{eq:mastau2}
		&\mas^{\gamma}_h(\mathring{\cL}_{M^-}(t),\cL(t)) + \mas^\gamma_h(\cL(t),\cL_{M^+}(t))  - \mas^\gamma_h(\mathring{\cL}_{M^-} (t),\cL_{M^+}(t)) \\
		&=  \tau^h_\mu(\id - \cP_{M^-}, P, \cP_{M^+}) -  \tau^h_\mu(\id - \cP_{M^-}, \cP_{M^+}, \cP_{M^+}) = \tau^h_\mu(\id - \cP_{M^-}, P, \cP_{M^+}). \nonumber
	\end{align} 
	Finally, we obtain 
	\begin{align*}
		\tilde{\eta}_h(D^M)
		&= \tilde{\eta}_h(D_P^{M^+}) + \tilde{\eta}_h(D_{1-P}^{M^-}) +\spf_h\{D^M_{P}(t)\} + \spf_h\{D^{M^-}_{\id -P}(t)\}  \\
		&= \tilde{\eta}_h(D_P^{M^+}) + \tilde{\eta}_h(D_{1-P}^{M^-}) +\mas^\gamma_h(\cL(t),\cL_{M^+}(t)) + \mas^{-\gamma}_h(\mathring{\cL}(t),\cL_{M^-}(t))  \\
		&= \tilde{\eta}_h(D_P^{M^+}) + \tilde{\eta}_h(D_{1-P}^{M^-}) + \tau^h_{\mu}(\id - \cP_{M^-}, P, \cP_{M^+})
	\end{align*} 
	where the second equality follows from Theorem \ref{specmas} and the third equality follows from \eqref{eq:mastau1} and \eqref{eq:mastau2}. 
\end{proof}

\begin{remark}
A consequence of Theorem \ref{etasplitthm} will be illustrated in forming the boundary correction terms of the equivariant Toeplitz index theorem on odd-dimensional manifolds with boundary in \cite{16}. 
\end{remark}

\begin{example} (The following example and setting are adopted  from \cite[\S 8.1]{15} with proofs omitted.)
Let $X^{2n+1}$ be a compact manifold with non-empty boundary $Y^{2n} = \partial X$. Consider a flat $U(n)$-bundle $E \to X$ associated to a representation $\alpha: \pi_1(X) \to U(n),$ equipped with a flat connection $B.$ Suppose a collar of $Y$ is of product type and the boundary data are compatible, e.g. there is a flat connection $b$ on $E|_{\partial X}$ such that $B|_{[0,\epsilon) \times Y} = \mathrm{pr}_2^*(b)$ where $\mathrm{pr}_2:[0,\epsilon) \times Y \to Y$ is the projection map. 
Suppose that $H$ is a subgroup of the isometry group of $Y$ and the action of $H$ extends to a metric-preserving action on $X.$ 
Assume the action of $H$ lifts to an equivariant action on the vector bundle $E \to X$ preserving the inner product. The fixed point set of every $h \in H$ is the disjoint union of compact submanifolds $N,$ possibly with boundary $\partial N \subset Y.$ 
Let $d_B:\Omega^{p}(X,E) \to \Omega^{p+1}(X,E)$ be the exterior covariant derivative and $\ast: \Omega^{p}(X,E) \to \Omega^{2n+1-p}(X,E)$ be the Hodge $\ast$-operator. 
Consider the odd signature operator   $D_B: \oplus_{p}\Omega^{2p}(X,E) \to \oplus_{p}\Omega^{2p}(X,E),$ defined by 
\[
D_B(\xi) = i^{n+1}(-1)^{p-1}(\ast d_B - d_B \ast)(\xi), \quad \xi \in \Omega^{2p}(X,E).
\] 
Let $\hat{\ast}$ be the restriction on $Y$ and define $\gamma: \oplus_k \Omega^{k}(Y,E|_{Y}) \to \oplus_k \Omega^{k}(Y,E|_{Y})$ by 
$\gamma(\xi)= i^{n+1}(-1)^{p-1}\hat{\ast} \xi$ if $\xi \in \Omega^{2p}(Y,E|_{Y})$ and  $\gamma(\xi)= i^{n+1}(-1)^{n-p}\hat{\ast} \xi$ if $\xi \in \Omega^{2p+1}(Y,E|_{Y}).$  
Let $A_b: \oplus_k \Omega^{k}(Y, E|_{Y}) \to \oplus_k \Omega^{k}(Y, E|_{Y})$ be the tangential operator (associated to $b$) defined by $A_b=\pm(d_b \hat{\ast} + \hat{\ast}d_b)$ with $+$ (resp. $-$) corresponds to the one that acts on even (resp. odd) forms with values in $E|_{Y}.$ These $\gamma$ and $A_b$ satisfy \eqref{eq:gamma1} and \eqref{eq:gamma2}, cf. \cite[pg 41]{15}.  Moreover, there is an identification $\Phi: \oplus_p \Omega^{2p}([0,\epsilon) \times Y, E|_{[0,\epsilon) \times Y}) \to C^{\infty}([0,\epsilon), \oplus_k \Omega^k(Y,E|_Y))$ such that $\Phi D_B \Phi^* = \gamma(\partial_x + A_b)$ over the collar, taking the form \eqref{eq:dirac1}. 

By equivariance, every $h \in H$ commutes with $\gamma, A_b$ and thus $\Phi D_B \Phi^*$ as $h$ acts on $Y$ and trivially on $I$ over $Y \times I.$ 
By \cite[Lemma 8.6, eq (8.9)]{15} and by equivariance, there exists an $h$-invariant Lagrangian subspace $V_\alpha$ of $\ker(A)$ that corresponds to the eigenvalue $0$ of $A$. 
Following \cite[pp 609]{15}, with a representation $\alpha,$ a Lagragian subspace $K \subset H^*(Y,\C^n_\alpha),$ and the positive eigenspan $F^+_0$ of $A_b,$ the ordinary invariant $\eta(D,X,F^+_0 \oplus K)$ is well-defined. 
By modifying the methods \cite[pp 598-614]{15} appropriately, with $M=M^+ \cup_Y M^-$ joined along $Y,$ Theorem~\ref{etasplitthm} gives 
\[
\tilde{\eta}_h(D^M) = \tilde{\eta}_h(D_P^{M^+}; F^+_0 \oplus V) + \tilde{\eta}_h(D_{\id-P}^{M^-}; F^-_0 \oplus \gamma(V)) +\tau^h_\mu (\gamma(V^-_\alpha),P(V),V^+_\alpha)
\]
for $D$ the odd signature operator coupled with a flat connection; any Lagrangian subspace $V \subset \ker(A);$ and $V^\pm_\alpha \subset \ker(A)$ are isomorphic to $\im(H^{\text{ev}}(M^\mp;\C^n_\alpha) \to H^{\text{ev}}(N;\C^n_\alpha)),$ cf. \cite[Theorem 8.8]{15}. 

\end{example}

\section*{Acknowledgments}
Johnny Lim acknowledges the support from the Ministry of Higher Education Malaysia for Fundamental Research Grant Scheme with Project Code: 
\linebreak FRGS/1/2021/STG06/USM/02/7. Part of the work was done while the first author visited the Research Center for Operator Algebras at East China Normal University
back in October 2019. He is grateful to the center for financial support. Hang Wang is supported by NSFC 12271165, 23JC1401900, and in part by Science and
Technology Commission of Shanghai Municipality (No. 22DZ2229014).
We are grateful to the anonymous referees for their useful suggestions and improvement.



\end{document}